\definecolor{mygray}{gray}{0.85}
\newcommand\+[1]{\mathcal{#1}}
\renewcommand{\leq}{\leqslant}
\renewcommand{\geq}{\geqslant}
\newcommand{\mrm}[1]{\mathrm{#1}}
\newcommand{\todog}[1]{\todo[inline, color=mygray]{#1}}
\def\sub{\subseteq}
\newcommand{\Aut}{\mrm{Aut}}
\newcommand{\Inn}{\mrm{Inn}}
\newcommand{\Out}{\mrm{Out}}
\newcommand{\Sym}{\mrm{Sym}}
\def\aaa{\alpha}
\def\sss{\sigma}
\def\LA{\Leftarrow}
\def\RA{\Rightarrow}
\def\ol{\bar} 
\def\lra{\leftrightarrow}
\def\n{\noindent}
\def\subsection{\@startsection{subsection}{3}%
  \z@{.5\linespacing\@plus.7\linespacing}{.3\linespacing}%
  {\bfseries\centering}}
\def\subsubsection{\@startsection{subsubsection}{3}%
  \z@{.5\linespacing\@plus.7\linespacing}{.3\linespacing}%
  {\centering}}
\def\myfnt{\ifx\protect\@typeset@protect\expandafter\footnote\else\expandafter\@gobble\fi}
\newcommand{\andre}[1]{{\textcolor{magenta}{Andre: #1}} }
\newtheorem{theorem}{Theorem}[section]
\newtheorem{convention}[theorem]{Conventions}
\newtheorem{corollary}[theorem]{Corollary}
\newtheorem{lemma}[theorem]{Lemma}
\newtheorem{proposition}[theorem]{Proposition}
\newtheorem{example}[theorem]{Example}
\theoremstyle{plain}
\newtheorem{thm}[theorem]{Theorem}
\newtheorem{cor}[theorem]{Corollary}
\theoremstyle{definition}
\newtheorem{fact}[theorem]{Fact}
\newtheorem{definition}[theorem]{Definition}
\newtheorem{remark}[theorem]{Remark}
\newtheorem{notation}[theorem]{Notation}
\newtheorem{assumption}[theorem]{Assumption}
\newcounter{claimcounter}
\newenvironment{claim}{\stepcounter{claimcounter}{\noindent {\bf Claim \theclaimcounter.}}}{}
\newcommand{\acl}{\mathrm{acl}}
\newcommand{\dcl}{\mathrm{dcl}}
\begin{document}
%%%%%%%%%%%%%%%%%%
%\begin{abstract} We continue the work from \cite{reconstruction2} on the search for dividing lines on oligomorphic groups toward progress on the question of smoothness of topological isomorphism on oligomorphic groups \cite{coarse}. We isolate two main dividing lines which both ensure smoothness of topological isomorphism, namely: (i) no algebraicity and (ii) the existence of finitely many {\em essential} subgroups up to conjugacy. This second dividing line is new and it includes all $\aleph_0$-categorical structures which have weak elimination of imaginaries after adding finitely many imaginary sorts. %We actually conjecture that all $\mathfrak{G}$-finite groups belong to the second dividing line; this points to the fact that a possible reduction of $E_0$ to topological isomorphism on oligomorphis groups has to involve somehow the most exotic $\aleph_0$-categorical structures present in the literature, i.e., finite covers of non $\mathfrak{G}$-finite oligomorphic groups.
%In the process, we also prove results of independent interest on outer topological automorphisms of $G$, for $G$ an oligomorphic group satisfying either one of our dividing lines, specifically we prove that $\mrm{Aut}(G)$ is a quasi-oligomorphic group and that $\mrm{Aut}(G)/\mrm{Inn}(G)$ is a profinite group.
%\end{abstract}
\title[Oligomorphic groups]{Oligomorphic groups,  their automorphism groups,   and  the  complexity of their isomorphism}
%\title {Symmetries and isomorphisms  for oligomorphic groups}
%\title {Topological isomorphisms between oligomorphic groups}
%\title[Dividing lines on the isomorphism problem for oligomorphic groups]{A quest for dividing lines on the isomorphism problem for oligomorphic groups}

\begin{abstract} 
The paper establishes results following  two interconnected directions. 

1.  Let $G$ be a    Roelcke precompact closed subgroup of the group $\mathrm{Sym}(\omega)$ of permutations of the natural numbers.  Let $\mathrm{Aut}(G)$ denote the group of continuous automorphisms of $G$. Then  $\mathrm{Inn}(G)$ is closed in $\mathrm{Aut}(G)$, where $\mathrm{Aut}(G)$ carries the topology of pointwise convergence for its (faithful) action on the    cosets of open subgroups.  Under the stronger hypothesis that~$G$ is oligomorphic, $\+ N_G/G$ is profinite, where $\+ N_G$ denotes the normaliser of~$G$ in $\mathrm{Sym}(\omega)$, and the topological group $\mathrm{Out}(G)= \mathrm{Aut}(G)/\mathrm{Inn}(G)$ is totally disconnected,  locally compact. 
 
2a. We provide a general method to show smoothness of the isomorphism relation for appropriate  Borel classes of oligomorphic groups. We  apply it to two such classes: the  oligomorphic  groups with   no algebraicity, and the oligomorphic groups with finitely many {essential} subgroups up to conjugacy.  

2b. Using this method we also show that if $G$ is in such a Borel  class, then $\mathrm{Aut}(G)$ is topologically isomorphic to an oligomorphic group, and   $\mathrm{Out}(G)$ is profinite.
\end{abstract}

\thanks{The first author was supported by the Marsden fund of New Zealand, grant number UOA-1931. The second author was  supported by project PRIN 2022 ``Models, sets and classifications", prot. 2022TECZJA, and by INdAM Project 2024 (Consolidator grant) ``Groups, Crystals and Classifications''.}

\author{Andr{\'e} Nies}
\address{School of Computer Science, The University of Auckland, New Zealand}
\email{andre@cs.auckland.ac.nz}

\author{Gianluca Paolini}
\address{Department of Mathematics ``Giuseppe Peano'', University of Torino, Italy.}
\email{gianluca.paolini@unito.it}

%\author{...}
%\address{...}
%\email{...}

\date{\today}
\maketitle

\tableofcontents

\section{Introduction}
\noindent {\it Background.} A  {permutation group}  on an infinite set $X$ is a subgroup of the group $\Sym(X) $ of permutations of $X$. Such a group  is called   \emph{oligomorphic} (Cameron~\cite{cameron_oligo}) if for each $n \in \omega$, the canonical action of $G$ on $X^n$ has only finitely many orbits.  
The \emph{closed}  subgroups of $\Sym(X)$ are precisely the automorphism groups of  structures with domain  $X$. These are topological groups: a neighbourhood basis of $1$ is given by the pointwise stabilisers of finite sets.     Oligomorphic closed subgroups $G$ of $\Sym(\omega)$ correspond exactly to automorphism groups of  $\aleph_0$-categorical  structures with domain $\omega$, structures of crucial importance in model theory. Concretely, we have $G= \Aut(M_G)$ where $M_G$ is the structure on $\omega$ that has an $n$-ary relation for each $n$-orbit of $G$.  Two oligomorphic groups are topologically isomorphic  if and only if the corresponding structures are bi-interpretable (Coquand, see~\cite{biinte}).
 
 The  close link between the topological study of oligomorphic groups and the model theory of $\aleph_0$-categorical  structures will determine our perspective of oligomorphic groups in this paper. However, these groups have been studied from a wide variety of other angles. For instance, Cameron studied the  possible growth of the number of orbits, both of $n$-element sets, and of $n$-tuples of pairwise distinct elements. This   connects oligomorphic groups  to the area of combinatorial enumeration. In theoretical computer science, they play a role in constraint satisfaction problems, when   the   templates  are certain reducts of infinite homogeneous structures~\cite{Bodirsky.Nesetril:06}.
    
  \medskip
  
\noindent {\it Results.} We   follow two interrelated directions.  The \emph{first direction}  studies, for the first time in generality,  the  groups $\Aut(G)$ of topological automorphisms of an oligomorphic closed subgroup $G$ of $\Sym(\omega)$,  and its  outer automorphism group $\Out(G)$.  The  group  $\Aut(G)$ has a unique Polish   topology, recalled  in \ref{df:nbhd} below,    that makes its  action on $G$ continuous  (\cite{Nies.Schlicht:nd}; also  see~\cite[Section~6]{LogicBlog:23}). The notation $H \leq_c \Sym(\omega)$ indicates that $H$ is a closed subgroup of $\Sym(\omega)$. 
%, which we assume below.

\begin{theorem} \label{th:summary part 1} Let $G \leq_c \Sym(\omega)$ be an oligomorphic group. Let $\+ N_G$ denote its normaliser in $\Sym(\omega)$.
	\begin{enumerate}[(a)]
 \item $\Inn(G)$ is closed in $\Aut(G)$.
 \item The Polish group $\+ N_G/G$ is profinite.
 \item The Polish group $\Out(G)$ is totally disconnected, locally compact (t.d.l.c.).  \end{enumerate}
\end{theorem}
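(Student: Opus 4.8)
The plan is to route everything through the normaliser $N_G$, via two continuous homomorphisms, and to reduce all three parts to the profiniteness statement (b) together with one reconstruction lemma. Write $O_n$ for the set of orbits of $G$ on $\omega^n$, which is finite by oligomorphy, and recall that the relations of the canonical structure $M_G$ are exactly these orbits. Consider the conjugation map $c\colon N_G\to\Aut(G)$, $c(n)=(g\mapsto ngn^{-1})$, and the orbit action $\rho\colon N_G\to\prod_n\Sym(O_n)$, under which $n\in N_G$ permutes the orbits because it normalises $G$; both maps are continuous since the relevant data depend on only finitely many coordinates. I first record three facts. (i) $N_G$ is closed in $\Sym(\omega)$: if $n_i\to n$ with $n_i\in N_G$, then $n_ign_i^{-1}\to ngn^{-1}$ for each $g\in G$, and closedness of $G$ gives $ngn^{-1}\in G$; the same for $n^{-1}$ yields $nGn^{-1}=G$. (ii) $C:=\ker c=C_{\Sym(\omega)}(G)$ is compact: if $c\in C$ and $b=ca$ then $G_b=G_a$, so every $C$-orbit lies in the fixed-point set of $G_a$, which is finite because $G_a$ has finitely many orbits on $\omega$; thus $C$ is a closed subgroup all of whose orbits are finite, hence compact. (iii) $\ker\rho=G$: an element preserving every orbit setwise preserves every relation of $M_G$, i.e.\ lies in $\Aut(M_G)=G$, and conversely.

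For (b), fact (iii) makes $\rho$ descend to a continuous injection $N_G/G\hookrightarrow\prod_n\Sym(O_n)$ into a profinite group, and by (i) the source is Polish. It then suffices to show the image is closed, equivalently that $N_G/G$ surjects onto $\varprojlim_k N_G/N_G^{(k)}$, where $N_G^{(k)}$ is the open normal subgroup fixing all orbits of arity $\leq k$; granting this, the open mapping theorem upgrades the continuous bijection $N_G/G\to\mathrm{image}$ to a topological isomorphism, exhibiting $N_G/G$ as a closed subgroup of a profinite group, hence profinite. The real content is the surjectivity: given a coherent family $\sigma=(\sigma_n)$ of orbit-permutations, I must build $n\in N_G$ realising it, by a back-and-forth construction of $n=\bigcup_s p_s$ as a bijection of $\omega$ that keeps every tuple $\bar a$ from the current domain inside the orbit $\sigma(\mathrm{orb}(\bar a))$. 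Coherence of $\sigma$ with the coordinate-projection maps between the $O_n$ is exactly what permits extending the domain by a point, and the resulting $n$ sends each orbit to an orbit, so $nGn^{-1}=G$. This back-and-forth is the main obstacle, the delicate case being algebraicity (finite orbits), where the witness required at an extension step may be forced to lie already in the current range and one must verify that coherence makes the demands consistent.

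The second key ingredient, which I isolate as the crux of (a) and (c), is the reconstruction statement that \emph{$c(N_G)$ is an open subgroup of $\Aut(G)$}. The idea is that an automorphism $\alpha$ close enough to the identity must carry each point-stabiliser $G_a$ to a point-stabiliser $G_b$, so that $\alpha$ preserves the natural permutation representation and is therefore induced by a permutation $n$ with $nGn^{-1}=G$, giving $\alpha=c(n)$; a subgroup containing a neighbourhood of $1$ is open. Making ``recognisable among the open subgroups'' precise — the point-stabilisers need not be maximal when there is algebraicity — is where the work lies, and I expect this, together with the back-and-forth above, to be the hardest part of the theorem.

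Granting the two ingredients, (a) and (c) follow formally. Since $c(N_G)$ is open it is closed, hence Polish, so the continuous bijection $\bar c\colon N_G/C\to c(N_G)$ is a topological isomorphism by the open mapping theorem. The subgroup $GC\leq N_G$ is closed (the closed subgroup $G$ times the compact subgroup $C$) and $C$-saturated, so $GC/C$ is closed in $N_G/C$ and therefore $\Inn(G)=\bar c(GC/C)$ is closed in $c(N_G)$, hence in $\Aut(G)$; this is (a). Consequently $\Out(G)=\Aut(G)/\Inn(G)$ is Polish. For (c), $c(N_G)/\Inn(G)$ is an open subgroup of $\Out(G)$ isomorphic to $N_G/GC$, which is a quotient of the profinite group $N_G/G$ of (b) by the closed image of $C$ and hence profinite. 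Thus $\Out(G)$ carries a compact open subgroup and is locally compact; as that open subgroup is totally disconnected it contains the identity component of $\Out(G)$, which is therefore trivial, so $\Out(G)$ is totally disconnected as well.
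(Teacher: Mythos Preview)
Your overall architecture is correct and, for parts (b) and (c), essentially matches the paper. For (b), the paper also identifies $N_G/G$ with an inverse limit of the finite groups of orbit-permutations; it obtains surjectivity not by back-and-forth but by a compactness argument (write down the theory of $M$ together with a function symbol permuting the orbit predicates according to the given coherent family, and take a countable model). For (c), the paper proves exactly your ``reconstruction lemma'' that $c(N_G)$ is open in $\Aut(G)$, and derives t.d.l.c.\ just as you do. One remark on your sketch of that lemma: the recognisability worry you raise is unnecessary. One does not need to detect \emph{which} open subgroups are point-stabilisers. Instead, pick representatives $a_1,\dots,a_k$ of the $1$-orbits and take the open subgroup $\{\Phi:\Phi(G_{a_i})=G_{a_i}\text{ for all }i\}$. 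Any $\Phi$ in this subgroup automatically sends every $G_a$ to a point-stabiliser, since $G_a=gG_{a_i}g^{-1}$ for some $g,i$ and then $\Phi(G_a)=\Phi(g)G_{a_i}\Phi(g)^{-1}=G_{\Phi(g)\cdot a_i}$; the paper then reconstructs the permutation $\pi$ from the action of $\Phi$ on cosets of the $G_{a_i}$.

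The genuine difference is in part (a). You deduce closedness of $\Inn(G)$ from the reconstruction lemma together with compactness of the centraliser (so that $GC$ is closed, hence $\Inn(G)=\bar c(GC/C)$ is closed in the open subgroup $c(N_G)$). This is a clean and correct deduction, but it only works for oligomorphic $G$, since both ingredients use oligomorphy. The paper instead proves (a) directly, and in the strictly larger class of Roelcke precompact groups, by a K\"onig's Lemma argument: to each $\Phi\in\Aut(G)$ it associates a finitely branching tree of finite approximations to a conjugating element, and shows $\Phi$ is inner iff the tree has an infinite branch, so that $\Inn(G)$ is $G_\delta$ and hence closed. Your route is more economical within the oligomorphic setting; the paper's route buys the extra generality and makes (a) logically independent of the normaliser machinery.
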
  
This theorem is obtained through  three  separate   results. Part  (a) is proved for the wider setting of Roelcke precompact groups in \ref{prop: Inn closed}. Part (b) follows from \ref{profinite_lemma}, and part (c) is obtained through \ref{prop: tdlc}.
The ``upper bound" on the complexity of $\Out(G)$ in (c)  begs the   question
whether  $\Out(G)$ is always profinite for an oligomorphic group $G$.
It is easy to see from our proofs that the map $G \mapsto \Out(G)$ is a Borel invariant of oligomorphic groups. Our results open the door for a wider study of oligomorphic groups via this invariant, perhaps similar to the case of finite groups~\cite{Wilson:25}. 

 The \emph{second direction} is to study   the complexity, in the sense of Borel reducibility~$\leq_B$ (see \cite{gao}), of the topological  isomorphism relation between oligomorphic groups. The set of  oligomorphic groups can be seen as a standard Borel space (see~\cite{nies}). In~\cite{coarse} it was shown that their isomorphism relation is   Borel reducible to    a Borel equivalence relation with all classes countable;   it was left open whether this equivalence relation is in fact much lower: is it Borel   reducible to the identity relation on $\mathbb R$?  Such an equivalence relation is called \emph{smooth}. 

We note that the research for classes of oligomorphic groups is part of a larger research programme started in \cite{nies}:  to determine the complexity of the  topological isomorphism relation on natural  Borel classes of closed subgroups of $\Sym(\omega)$.   For example, topological isomorphism on the class of topologically \emph{finitely generated} compact subgroups of $\Sym(\omega)$  is smooth:   such groups are given by the collection of isomorphism types of their finite quotients~\cite{Jarden.Lubotzky:08}, and  the set of   types of such   quotients can be seen as an element of a standard Borel space, when suitably encoded as a set of natural numbers. In contrast, isomorphism on the class of \emph{all} compact subgroups of $\Sym(\omega)$ is Borel equivalent to the isomorphism relation between countable graphs, and hence not smooth \cite[Thm.\ 4.3]{nies}.  (We note that the compact closed subgroups of $\Sym(\omega)$ are, up to topological isomorphism, precisely the countably based profinite groups.) 
 
 We provide  in Theorem~\ref{second_th-rec} a sufficient criterion for when the isomorphism relation on a Borel class  $\+ V$ of oligomorphic groups is smooth (in brief, we will say that $\+ V$ is smooth). The criterion says that there is a  Borel and invariant property $P$ picking out open subgroups from groups in $\+ V$, so that the filter of all open subgroups of a group $G$ in $\+ V$ is  generated by finitely many  conjugacy classes of subgroups enjoying~$P$. We  apply the criterion to two classes.
 
The first class consists of  the automorphism groups  of $\aleph_0$-categorical structures with no algebraicity. Here the property  $P$ says that the subgroup is the stabiliser of a number. 

The second class consists of the oligomorphic groups  with finitely many essential subgroups up to conjugacy.  
Here, an \emph{almost  essential} subgroup is an open subgroup~$U$ without a proper subgroup of finite index such that the   conjugacy class  of~$U$ generates the filter of open subgroups; to be \emph{essential} means additionally  to be   of minimal depth, where the depth of an open subgroup $U$ is the least length of a maximal  chain of subgroups above~$U$ (noting that there are only finitely many subgroups of $G$ above $U$).    Definition~\ref{def_essential_intro} will provide the detail.  %The property $P$ says that the subgroup is {essential}. 

  \begin{theorem} \label{great theorem}Let $\+ C$ be either
 	
 	\n (1)  the class of oligomorphic groups with no algebraicity, or  
 	
 	\n  (2) the class of oligomorphic groups with finitely many essential subgroups up to conjugacy.
 	\begin{enumerate}[(a)] \item The topological  isomorphism relation on $\+ C$ is smooth.
 		\item If $G \in \+ C$ then $\Out(G)$ is profinite, and $\Aut(G)$ is isomorphic to an oligomorphic group. 
 		   \end{enumerate}
 \end{theorem}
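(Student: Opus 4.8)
The plan is to obtain part~(a) for each class as an instance of the general criterion in Theorem~\ref{second_th-rec}, and then to read part~(b) off the permutation action that the criterion produces. For class~(2) I would take $P(U)$ to be ``$U$ is essential''. This is Borel, and it is invariant under topological isomorphism because it is phrased purely in terms of the subgroup lattice and the index function (open, no subgroup of finite index, minimal depth among those whose conjugacy class generates the filter of open subgroups); the defining property of the class is exactly that finitely many conjugacy classes of such subgroups generate that filter, so Theorem~\ref{second_th-rec} applies directly. For class~(1) I would take $P(U)$ to be ``$U$ is a point stabiliser $G_a$''. The generation condition is then automatic, since every open subgroup contains a finite intersection $G_{a_1} \cap \cdots \cap G_{a_n}$ of point stabilisers, and there are only finitely many conjugacy classes of point stabilisers because oligomorphy gives finitely many $1$-orbits $a^G$. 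The substantive point is invariance: I must single out the point stabilisers among all open subgroups by a condition referring only to the abstract topological group $G$, and it is precisely here that no algebraicity is used.

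For part~(b), let $\Omega$ denote the countable set of $P$-subgroups of $G$. Invariance of $P$ means that $\Aut(G)$ acts on $\Omega$, and I would first show this action is oligomorphic. The inner automorphisms act on $\Omega$ by conjugation, and $\Inn(G)$ acts oligomorphically there: in case~(1) no algebraicity makes $a \mapsto G_a$ a bijection $\omega \to \Omega$ intertwining the two actions (and forces $Z(G) = 1$, so $\Inn(G) \cong G$), while in case~(2) each conjugacy class of essential subgroups is a $G$-quotient of a coset space $G/U$, on which $G$ acts oligomorphically. Since a supergroup of an oligomorphic group has no more orbits on powers, $\Aut(G)$ is oligomorphic on $\Omega$. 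As the $P$-subgroups form a neighbourhood basis of $1$, the topology of~\ref{df:nbhd} on $\Aut(G)$ coincides with pointwise convergence on $\Omega$; together with faithfulness of the action this identifies $\Aut(G)$ with a closed oligomorphic subgroup of $\Sym(\Omega)$.

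To see that $\Out(G)$ is profinite I would establish the reconstruction statement that every topological automorphism of $G$ is induced by conjugation inside $\Sym(\omega)$, i.e.\ $\Aut(G) \cong N_G/C$ where $C = C_{\Sym(\omega)}(G)$. In case~(1) this follows from the previous paragraph: via the bijection $\Omega \cong \omega$ the action realises $\Aut(G)$ as permutations of $\omega$, and for $\alpha \in \Aut(G)$, $g \in G$ the identity $\alpha\,\Inn(g)\,\alpha^{-1} = \Inn(\alpha(g))$ shows that $\alpha$ normalises $\Inn(G) = G$, so $G \leq \Aut(G) \leq N_G$. Granting the reconstruction, $\Inn(G)$ is the image of $G$ in $N_G/C$ and hence $\Out(G) \cong N_G/(G\,C)$ is a Hausdorff quotient of $N_G/G$; since the latter is profinite by Theorem~\ref{th:summary part 1}(b), so is $\Out(G)$. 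In case~(1) one can phrase this more directly: closedness of $\Aut(G)$ in $N_G$, coming from oligomorphy, exhibits $\Out(G)$ as a closed subgroup of the profinite group $N_G/G$.

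I expect two steps to carry the real weight. The first is the intrinsic, isomorphism-invariant characterisation of point stabilisers needed for invariance of $P$ in case~(1)(a): it cannot simply be ``maximal open subgroup'', since an $\aleph_0$-categorical structure with no algebraicity may still carry nontrivial $\emptyset$-definable equivalence relations, so that block stabilisers sit strictly between a point stabiliser and $G$. The natural route is to recover the canonical action on $\omega$ from the conjugation action of $G$ on its poset of open subgroups, using no algebraicity to pin down which conjugacy classes are point-stabiliser classes. The second is the faithfulness-and-reconstruction step of~(b): showing that an automorphism fixing every $P$-subgroup is trivial, and --- in case~(2), where $\Omega$ is not a copy of $\omega$ --- recovering the original permutation action from the essential subgroups so as to place $\Aut(G)$ inside $N_G$. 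Everything else is bookkeeping with Theorems~\ref{second_th-rec} and~\ref{th:summary part 1}.
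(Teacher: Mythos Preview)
Your plan for part~(a) matches the paper's: verify the hypotheses of Theorem~\ref{second_th-rec} with the two choices of $P$ you name; in case~(1) the real work is indeed invariance of ``point stabiliser'', which the paper handles via Rubin's lemmas (Facts~\ref{Rubin_regular} and~\ref{Rubin_hammer}). Note, however, that Theorem~\ref{second_th-rec} already contains part~(b) as its conclusions~(2) and~(3), so once the hypotheses are checked nothing further is required.

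Your separate route to~(b) has a genuine gap in case~(2). You reduce ``$\Out(G)$ profinite'' to the reconstruction statement $\Aut(G)\cong N_G/C$, but for class~(2) this is neither proved in the paper nor evidently available: an essential subgroup is a stabiliser $G_e$ for an imaginary $e\in M^{\mrm{eq}}$, not in general for $e\in M$, and there is no mechanism to recover the home-sort action on $\omega$ from the essential subgroups, so you cannot place $\Aut(G)$ inside $N_G$. Theorem~\ref{prop: tdlc} only shows that the image of $N_G$ is \emph{open} in $\Aut(G)$, not all of it. The paper's argument for~(b) avoids $N_G$ entirely: it identifies $\Aut(G)$ with $\Aut(\+ C_G)$, where $\+ C_G$ is built on the \emph{cosets} of $P$-subgroups (Lemma~\ref{the_first_crucial_lemma}), shows $\Inn(G)$ corresponds to $\Aut(\+ C_G^+)$ for the orbit-expansion $\+ C_G^+$, and then applies Corollary~\ref{profinite_lemma+}. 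Relatedly, your $\Omega=\{P\text{-subgroups}\}$ is too coarse: the topology of Definition~\ref{df:nbhd} is pointwise convergence on cosets, which is in general strictly finer than on subgroups, and faithfulness of the $\Aut(G)$-action on subgroups alone is not clear in case~(2). Your direct argument does go through for case~(1) --- it is essentially Theorem~\ref{th:no alg detail} --- but that proof uses no algebraicity in an essential way and does not transfer.
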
 
 The results are obtained by verifying  the hypotheses that are needed for the general  Theorem~\ref{second_th-rec}: for (1)  see Lemma~\ref{lem: no alg}; for (2) see Lemma~\ref{lem:essential}.

The groups $\Aut(G)$ and $\Out(G)$ have somewhat more concrete descriptions when  $G$ is  in  the first class, i.e., $G = \mathrm{Aut}(M)$, for a structure  $M$ with no algebraicity. Let $\mathcal{E}_M$  denote    the orbital structure of $M_G$, which is the  reduct of $M_G$ that for each $n$ has the equivalence relation of being in the same $n$-orbit. The descriptions are as follows. 
 
 \n 
{\it  Suppose that  $G = \mathrm{Aut}(M)$ for  a structure $M$ with no algebraicity. Then \begin{center} $\mathrm{Aut}(G) \cong \mathrm{Aut}(\mathcal{E}_M) = \+ N_G$, and $\Out(G) \cong \+ N_G/G$. \end{center} }
\noindent For a proof see the arXiv version of the paper~\cite[Th.\ 5.6]{arxiv}.     
%\n This will be 	proved as~Theorem~\ref{th:no alg detail}. 

Our main motivation for considering the second class is the following. An  
oligomorphic group is called \emph{$\mathfrak{G}$-finite}    \cite[Definition~3.4.2, pg.~33]{Evans.etal:97}
if each open subgroup contains a least open subgroup of finite index.  (The definition in Lascar's original paper~\cite{Lascar:89} is different, but  coincides with the present one under $G$-compactness.)  We note that   Evans and Hewitt~\cite{evans_profinite} constructed  canonical examples of oligomorphic groups that  are not $\mathfrak{G}$-finite.

%Our main motivation for considering the second class is the following. An 
%oligomorphic group is called \emph{$\mathfrak{G}$-finite} if each open subgroup 
%contains a least open subgroup of finite index. This is the definition used in 
%the context of oligomorphic groups as topological groups 
%(cf.\ \cite[Definition~3.4.2, p.~33]{Evans.etal:97}). Notice that in the 
%literature there is also a different notion of $\mathfrak{G}$-finiteness which 
%goes back to~\cite{Lascar:89}; the two notions are known to be equivalent 
%under $G$-compactness.

 Proposition~\ref{there_are_essential} shows  that every $\mathfrak{G}$-finite group has an essential subgroup. So the class of $\mathfrak{G}$-finite groups that have  only finitely many essential subgroups up to conjugacy is smooth.   It might  be possible a priori   that   \emph{every}  $\mathfrak{G}$-finite group is of this kind.  
 As we will see in Section~\ref{s:wei section}, the  second class   contains the class of  automorphism groups of $\aleph_0$-categorical structures that  have weak elimination of imaginaries, recently studied in~\cite{reconstruction2}.  So \ref{great theorem}(b) above is new in particular for these groups. Each group in this class  is $\mathfrak{G}$-finite   \cite{reconstruction2}.    %Bodirsky et al.\  \cite[Section 3.7]{evans_finite_kang} obtained such a  group that is the automorphism group of an $\aleph_0$-categorical structure  in a finite signature; see the  last paragraph of that paper for a discussion why the group is not  $\mathfrak{G}$-finite,  relying on Lascar~\cite{Lascar:89}.  
   
     \medskip
     
 \n {\it Discussion.}  We do not know whether the isomorphism relation  is  smooth for the   class of all oligomorphic groups, so  we search for \emph{dividing lines}~$D$.   The ideal situation would be  that in the presence of  $D$ one has smoothness, and otherwise not; this is analogous to   what happens with the dividing lines in classification theory. It  would be a strong solution to  the whole problem.  If a Borel equivalence relation is not smooth,  then the relation $E_0$ of almost equality of  subsets of $\omega$ can be Borel reduced to it by a result of Harrington, Kechris and Louveau (see, e.g., \cite[Thm.\  6.3.1]{gao}). We search for the most general condition   ensuring smoothness,  so that we can   focus on  possible constructions for   a reduction of $E_0$   that     exclude groups with that condition.
%	
%We think of Theorem~\ref{thm: main informal 1} as providing two dividing lines. 
%The   application of~(B) we had in mind  is for structures with weak elimination of imaginaries after adding finitely many sorts. However,  venue~(B) and its more general form involving $P$-subgroups are   of independent interest, as it is well possible that all $\mathfrak{G}$-finite groups have only finitely many essential subgroups up to conjugacy. 
Theorem~\ref{great theorem}  suggests  that  the ``right"  dividing line is \emph{$\mathfrak{G}$-finite or without algebraicity}. So groups that fail both conditions would be needed to show the class of oligomorphic groups is not smooth. For this one could  consider   finite covers of non-$\mathfrak{G}$-finite groups, which are shown to exist  in~\cite[Chapter~1]{Evans.etal:97}.  

  \medskip
  
\noindent   \emph{Structure of the paper.}   In Section~\ref{s:Roelcke} we discuss the Polish topology on $\Aut(G)$, for $G$ in the wider class of Roelcke precompact groups $G$. Then we show that $\Inn(G)$ is closed in $\Aut(G)$, thus leading to a Polish topology on   $\Out(G)= \Aut(G)/\Inn(G)$. The core Section~\ref{s:modelty} provides  model theoretic methods needed in the rest of the paper. We use model theory to show that $\+ N_G/G$ is profinite for any oligomorphic~$G$, where $\+ N_G$ is the normaliser of $G$ in $\Sym(\omega)$. Thereafter, we discuss how to represent open cosets of $G$ by pairs of imaginaries, and use this  in   Subsection~\ref{ss: Polish}   to show  that $\Out(G)$ is a t.d.l.c.\ group for oligomorphic $G$.
Section~\ref{ss:PropP} develops the general criterion that allows us  to treat the two rather disparate cases in Theorem~\ref{great theorem}  uniformly; we expect it to have further applications to other Borel classes of oligomorphic groups.   Section~\ref{smooth section} then shows the criterion is applicable to these two classes, and gives further information particular to these classes.

\begin{convention} \label{conv;basic convs} 	 {\rm By $G,H$ we will   denote    closed subgroups of $\Sym(\omega)$.
 All  isomorphisms between topological groups will be topological. $\Aut(G)$ will denote  the group of \emph{continuous} automorphisms of $G$.  We use the group theoretic notation $g^h = hgh^{-1}$ for the conjugate  of $g$ by~$h$.
		By $M$  we always denote a (countably infinite) $\aleph_0$-categorical structure.  By ``definable" we  will mean ``$\emptyset$-definable" unless otherwise noted. }\end{convention}

\section{$\Aut(G)$ for Roelcke precompact $G$}
\label{s:Roelcke}
The main result of this section is that for any Roelcke precompact non-Archimedean group  $G$,   the group of inner automorphisms is closed in $\Aut(G)$. This needs some definitions and preliminaries.

  A  countably based Polish group is  \emph{non-Archimedean} if it has a neighbourhood basis of $1$ consisting of open subgroups. Equivalently, it is topologically isomorphic to a closed subgroup of $\Sym(\omega)$.  Recall  that a \emph{double coset} of a subgroup $U$  of  a group $G$ is a set of the form $U gU$ where $g\in G$. 
Roelcke precompactness~\cite{Dierolf.Roelcke:81} is a general property of topological groups: being  precompact in  the Roelcke uniformity. We will only consider non-Archimedean groups, in which case   the definition can be phrased as follows. 
 \begin{definition}\label{def_precompact} A non-Archimedean group $G$ is \emph{Roelcke precompact} if each open subgroup $U$ has only finitely many double cosets; equivalently, the left action of $U$ on its own \emph{left} cosets has only finitely many orbits.  \end{definition}
 It is well known  that such a group $G$  has only countably many open subgroups: such a subgroup  contains the pointwise stabiliser $V$ of a finite set, and hence is  given as a union of the finitely many double cosets of $V$.  Each oligomorphic group is Roelcke precompact by the following. 
 \begin{fact} [\cite{Tsankov:12}, Theorem 2.4]  \label{fa:RP Olig}A closed subgroup $G$ of $\Sym(\omega)$ is oligomorphic iff $G$ is  Roelcke precompact and the action of $G$ on $\omega$ has only finitely many $1$-orbits.  \end{fact}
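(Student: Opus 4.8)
The plan is to prove both implications by relating the $G$-orbits on tuples to double cosets of the basic open subgroups, namely the pointwise stabilisers $G_{\bar a}$ of finite tuples $\bar a \in \omega^n$. I will use two elementary facts about a non-Archimedean $G$: first, every open subgroup $U$ contains some $G_{\bar a}$; second, for $\bar a \in \omega^n$ there is a $G$-set isomorphism $G/G_{\bar a} \cong \bar a^G$ onto the $G$-orbit of $\bar a$, under which the $U$-orbits on $G/G_{\bar a}$ correspond exactly to the double cosets in $U\backslash G/G_{\bar a}$.

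The first auxiliary step is a coarsening observation: if $V \leq U$ are open subgroups, then every double coset $UgU$ is a union of double cosets of the form $VxV$, so $|U\backslash G/U| \leq |V\backslash G/V|$; more generally, for arbitrary open $U,V$ one has $|U\backslash G/V| \leq |W\backslash G/W|$ with $W = U \cap V$. Consequently, if $G$ is Roelcke precompact it suffices to verify the double-coset condition on the basic subgroups $G_{\bar a}$, and the quantities $|U\backslash G/V|$ are finite for all open $U,V$.

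For the forward implication, assume $G$ is oligomorphic. Taking $n=1$ gives finitely many $1$-orbits. For Roelcke precompactness, fix $\bar a \in \omega^n$ and identify the double cosets $G_{\bar a}\backslash G/G_{\bar a}$ with the $G_{\bar a}$-orbits on $\bar a^G$; these in turn biject with the $G$-orbits of the pairs $(\bar a, \bar c)$ with $\bar c \in \bar a^G$, hence lie among the finitely many $G$-orbits on $\omega^{2n}$. So each $G_{\bar a}$ has finitely many double cosets, and by the coarsening step so does every open subgroup.

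For the reverse implication, assume $G$ is Roelcke precompact with finitely many $1$-orbits. The key intermediate claim is that then \emph{every} open subgroup $U$ has finitely many $1$-orbits on $\omega$: writing $\omega$ as the disjoint union of the finitely many $G$-orbits $d_1^G, \dots, d_m^G$ with $d_j^G \cong G/G_{d_j}$, the $U$-orbits on $d_j^G$ are precisely the double cosets in $U\backslash G/G_{d_j}$, finite by the auxiliary step; summing over $j$ bounds the number of $U$-orbits on $\omega$. Granting this, I would finish by induction on $n$: given finitely many orbits on $\omega^n$ with representatives $\bar a_1, \dots, \bar a_k$, each orbit on $\omega^{n+1}$ is determined by the orbit of its first $n$ coordinates together with a $G_{\bar a_i}$-orbit of the last coordinate, so the number of $(n+1)$-orbits is at most $\sum_i (\#\,1\text{-orbits of } G_{\bar a_i})$, which is finite. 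I expect the main obstacle to be precisely this intermediate claim, the passage from finiteness of double cosets to finiteness of the $1$-orbits of an arbitrary open subgroup, since the inductive bookkeeping afterwards is routine.
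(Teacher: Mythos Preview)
Your argument is correct. Note, however, that the paper does not supply its own proof of this fact: it is quoted as \cite[Theorem~2.4]{Tsankov:12} and used as a black box. There is therefore nothing in the paper to compare your proof against. Your route---identifying $U\backslash G/V$ with $U$-orbits on $G/V$, bounding mixed double cosets by those of $U\cap V$, and then inducting on $n$ via the key step that every open subgroup has finitely many $1$-orbits---is the standard proof and matches Tsankov's original argument in spirit.
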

Recall that  by $\Aut(G)$ we denote  the group of \emph{continuous} automorphisms of  $G$.
\begin{definition}  \label{df:nbhd} Given a Roelcke precompact group $G$, we define a topology  on  $\Aut(G)$ by declaring  as open the  subgroups of the form \[\{\Phi  \in \Aut(G)\colon \forall {i= 1\ldots n} \  \Phi(A_i)= A_i\},\] where $A_1, \ldots, A_n $ are   cosets of open subgroups of $G$. \end{definition}
\begin{fact}[\cite{Nies.Schlicht:nd}, Thm.\ 1.2, Thm.\ 2.1]  $\Aut(G)$ with the topology  given   by Definition~\ref{df:nbhd}  is  non-Archimedean.   This  topology on $\Aut(G)$ is     the least Baire topology, and the unique Polish topology that makes the action on $G$ continuous.\end{fact}

\cite[Section 3.3]{coarse} develops a    duality between    oligomorphic groups $G$ and   countable structures $\+ M(G)$ called ``coarse groups" defined on the  cosets of open  subgroups of~$G$.  The  {meet groupoid} $\+ W(G)$ is a   variant of   $\+ M(G)$   briefly introduced   in~\cite[Remark\ 2.16]{coarse},   where it is   noted that  $\+ M(G)$ and $\+ W(G)$ are interdefinable via a fixed set of first-order formulas.  The first author and Melnikov~\cite{Melnikov.Nies:22} elaborated on $\+ W(G)$, albeit  in the setting of totally disconnected, locally compact groups $G$  (where its domain consists of the  \emph{compact} open cosets).     It will be    convenient  to use  $ \+ W(G)$  rather than $\+ M(G)$  in what follows.  
\begin{definition} \label{def:meetgr} Given a Roelcke precompact  group $G$, the domain of the \emph{meet groupoid} $\+ W(G)$   consists of the open cosets, together with $\emptyset$. The groupoid product $A \cdot B$ of open cosets is the usual product $AB$ in the group; it  is defined only if   $A$ is a left coset of a subgroup that $B$ is a right coset of, so that  the result is again a coset. The meet is the usual intersection as a binary operation. 
\end{definition}
Note that a   coset  $A$ in $G$ is a right coset of the subgroup  $U= A \cdot A^{-1}$ and a left coset of $V=  A^{-1} \cdot A$. Given that groupoids can be seen as categories, we say that $U$ is the \emph{domain} of $A$ and that $V$ is the \emph{codomain} of $A$. Thus, $A\cdot B$ is defined iff the   codomain of $A$ coincides with the   domain of $B$. By a \emph{two-sided coset} of a subgroup $U$ we mean a left coset of $U$ that is also a right coset of $U$.
The two-sided cosets of $U$ are in a canonical  bijection with $ N_G(U)/U$, whence  their number equals $ [N_G(U) : U]$, the index of $U$ in its normaliser in $G$. 
%; such a coset is also a double coset of $U$. If $gU = Uh$ for some $h \in G$, then since $g \in Uh$ we get $h \in Ug$, hence $Uh = Ug$. Thus the condition reduces to $gU = Ug$, which holds if and only if $gUg^{-1} = U$, i.e., $g \in N_G(U)$. Since two elements $g, g' \in N_G(U)$ define the same two-sided coset if and only if $g^{-1}g' \in U$, the two-sided cosets of $U$ are in bijection with $N_G(U)/U$, and their number equals $[N_G(U) : U]$. 
%\newline \gianluca{Added explanations for the equality, check if OK.}

\begin{fact} \label{fact:conjugates} For   conjugate subgroups  $U,V \in \+ W(G)$,     there is a bijection from the set of two-sided cosets of $U$ to  the set  of   elements of $\+ W(G)$  with domain  $U$ and codomain  $V$. \end{fact}

\begin{proof}
Since $U$ and $V$ are conjugate, there exists a coset $A \in \mathcal{W}(G)$ with domain $U$ and codomain $V$. We claim that the map $\beta$ given by  $\beta(C) =C \cdot A$ is a bijection as required.

\smallskip \noindent \textit{$\beta$  is well-defined: }notice that  $C \cdot A$ is defined  since $C$ has domain $U$ and $A$ has codomain $U$.
Furthermore, $C \cdot A$ has domain $U$   and codomain $V$.  

\smallskip \noindent \textit{$\beta$  is injective:} if $C \cdot A = C' \cdot A$, then multiplying on the right by $A^{-1}$ gives $C \cdot (A \cdot A^{-1}) = C' \cdot (A \cdot A^{-1})$, i.e., $C \cdot U = C' \cdot U$, and since $C$ and $C'$ are already cosets of $U$ this gives $C = C'$.

\smallskip \noindent  \textit{$\beta$  is  surjective:} given any $B \in \mathcal{W}(G)$ with domain $U$ and codomain $V$,  the     product  $B \cdot A^{-1}$ is defined since the codomain of $B$ is $V$, which equals the codomain of $A$, and  hence the domain of $A^{-1}$. The coset $B \cdot A^{-1}$ has domain $U$ (the domain of $B$) and codomain $U$ (the domain of $A$), so it is a two-sided coset of $U$. Furthermore $(B \cdot A^{-1}) \cdot A = B \cdot (A^{-1} \cdot A) = B \cdot V = B$, so $B$ is in the range of $\beta$.
%\newline \gianluca{Add a few details. Please check.} Can't hurt here to calculate a bit with the new notion of MG. just smalll fixes made.
\end{proof}

$\Inn(G)$ denotes  the group of inner automorphisms of $G$. The following implies Theorem~\ref{th:summary part 1}(a). As a hypothesis it would suffice to assume that  each open subgroup  has finite index in its normaliser.
\begin{theorem}  \label{prop: Inn closed}  
		  $\Inn(G)$ is closed in $\Aut(G)$ for each Roelcke precompact group $G$. 
\end{theorem}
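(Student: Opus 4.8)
The plan is to transport the problem to the countable meet groupoid $\+ W(G)$ and solve it by König's lemma, with Roelcke precompactness entering exactly to make the relevant search tree finitely branching via \ref{fact:conjugates}. Recall from the discussion before \ref{df:nbhd} that there is an isomorphism $\Aut(G)\cong\Aut(\+ W(G))$ carrying the topology of \ref{df:nbhd} to the topology of pointwise convergence on the countable set $\+ W(G)$; write $\iota_h$ for the inner automorphism acting on $\+ W(G)$ by $A\mapsto hAh^{-1}$. Under this identification, $\Phi\in\overline{\Inn(G)}$ means precisely that for every finite $\+ F\sub\+ W(G)$ there is $h\in G$ with $hAh^{-1}=\Phi(A)$ for all $A\in\+ F$, and the goal is to produce a single $h$ working simultaneously for every $A$. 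I would fix a decreasing neighbourhood basis $U_0\supseteq U_1\supseteq\cdots$ of open subgroups with $\bigcap_n U_n=\{1\}$ (e.g.\ pointwise stabilisers of longer and longer tuples), set $W_n:=\Phi(U_n)$, and enumerate $\+ W(G)=\{A_0,A_1,\dots\}$ so that each $U_n$ occurs among $A_0,\dots,A_n$. A preliminary observation is that for any coset $A$ the conjugation stabiliser $\mathrm{Stab}(A)=\{g\colon gAg^{-1}=A\}$ is open (it contains the pointwise stabiliser of a finite tuple defining $A$ together with a translate), so $U_n\sub\mathrm{Stab}(A)$ for large $n$; when this holds, $g\mapsto gAg^{-1}$ is constant on each left coset of $U_n$.

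The core construction is a tree $T$, ordered by reverse inclusion, whose level-$n$ nodes are the left cosets $B$ of $U_n$ with domain $BB^{-1}=\Phi(U_n)$ (so that, in the terminology of \ref{def:meetgr}, $B$ has domain $W_n$ and codomain $U_n$) satisfying in addition $cA_ic^{-1}=\Phi(A_i)$ for every $i\leq n$ with $U_n\sub\mathrm{Stab}(A_i)$ and every $c\in B$ — a well-defined condition by the preliminary observation. I would verify three properties. First, each level is \emph{nonempty}: applying $\Phi\in\overline{\Inn(G)}$ to the fragment $\{A_0,\dots,A_n\}$ gives $h$ with $\iota_h$ agreeing with $\Phi$ there, and then $B:=hU_n$ is a level-$n$ node, its domain being $hU_nh^{-1}=\Phi(U_n)=W_n$. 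Second, each level is \emph{finite}: by \ref{fact:conjugates} there are only $|N_G(U_n)/U_n|$ cosets with the prescribed domain and codomain at all (note $U_n,W_n$ are conjugate, since some element conjugates one to the other), and this number is finite because two-sided cosets of $U_n$ are double cosets, of which there are finitely many by Roelcke precompactness; the remaining clauses only cut this finite set down. Third, the parent $B:=B'U_n$ of any level-$(n+1)$ node $B'$ is again a node: the inclusion $U_{n+1}\sub U_n$ propagates each relevant agreement clause downward, and because $U_n$ occurs among $A_0,\dots,A_{n+1}$ with $U_{n+1}\sub\mathrm{Stab}(U_n)=N_G(U_n)$, the value $\iota_b(U_n)$ is forced to be $\Phi(U_n)=W_n$, i.e.\ $B$ has domain $W_n$.

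These three points make $T$ an infinite, finitely branching tree, so König's lemma yields a branch $B_0\supseteq B_1\supseteq\cdots$ with each $B_n$ a left coset of $U_n$. Choosing $b_n\in B_n$, the nesting gives $b_n^{-1}b_m\in U_n$ for $m\geq n$, so $(b_n)$ is Cauchy; completeness of $G$ (a closed subgroup of $\Sym(\omega)$) gives a limit $h\in\bigcap_n B_n$, and $\bigcap_n B_n=\{h\}$ since $\bigcap_n U_n=\{1\}$. Finally $\iota_h=\Phi$: given any $A_i$, choose $n\geq i$ with $U_n\sub\mathrm{Stab}(A_i)$; since $h\in B_n$, the level-$n$ node condition gives $hA_ih^{-1}=\Phi(A_i)$. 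As $i$ was arbitrary, $\iota_h$ and $\Phi$ agree on all of $\+ W(G)$, whence $\Phi=\iota_h\in\Inn(G)$, proving $\Inn(G)$ closed.

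The main obstacle is the tension built into the tree: finiteness of each level must come \emph{only} from the domain/codomain constraint — this is exactly what \ref{fact:conjugates} together with Roelcke precompactness supplies — yet the node conditions must still be strong enough that any branch limit conjugates correctly on \emph{every} element of $\+ W(G)$. The reconciliation is that the extra agreement clauses $cA_ic^{-1}=\Phi(A_i)$ merely filter an already finite level, while the openness of the stabilisers $\mathrm{Stab}(A_i)$ guarantees they are coherent along the tree and accumulate, in the limit, to full agreement. The delicate point to get right in the writeup is the inheritance in the third property, namely that passing to parents preserves nodehood and in particular that the domain condition at level $n$ is inherited from level $n+1$; everything else is bookkeeping.
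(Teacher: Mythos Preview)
Your argument is correct and shares the paper's core mechanism: a K\"onig's lemma argument in which Roelcke precompactness, via \ref{fact:conjugates}, supplies finite branching. The organization differs in two respects worth noting. First, the paper does not start from $\Phi\in\overline{\Inn(G)}$; instead it builds, for \emph{every} $\Phi\in\Aut(G)$, a tree $T_\Phi$ whose nodes at level $n$ are sequences $\langle(B_i,C_i)\rangle_{i<n}$ of pairs of cosets with $\Phi(A_i)=B_i\cdot A_i\cdot C_i^{-1}$ and $\bigcap_{i<n}(B_i\cap C_i)\neq\emptyset$, and shows that $\Phi$ is inner iff $T_\Phi$ has an infinite path. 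Since nonemptiness of each level is an open condition on $\Phi$, this exhibits $\Inn(G)$ as $G_\delta$, whence closed by the standard fact that a $G_\delta$ subgroup of a Polish group is closed. Your route trades that extra structural information (the $G_\delta$ description) for a somewhat cleaner tree: single cosets of a fixed decreasing basis $(U_n)$, with the closure hypothesis feeding directly into nonemptiness of levels. Second, the paper tracks approximations to both $g$ and $g^{-1}$ simultaneously (the pairs $(B_i,C_i)$), while you track only left cosets $hU_n$ and recover everything from the agreement clauses; this is why you need the preliminary observation that $\mathrm{Stab}(A)$ is open, a step the paper's two-sided bookkeeping sidesteps. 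Both packagings are valid; yours is arguably more transparent about where the limit element comes from, the paper's yields the slightly stronger $G_\delta$ conclusion.
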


\begin{proof}
   %By a   fact  from the  theory of Polish groups (see \cite[Prop.\ 2.2.1]{gao}), it suffices to show that $\Inn(G)$ is $G_\delta$ in $\Aut(G)$. 
%     Let $(A_n)_{n \in \omega}$ be a listing of the open cosets of~$G$ without repetition. Given $\Phi \in \Aut(G)$,  we will define  a set $T_\Phi$ of strings  over some infinite alphabet which is closed under prefixes, and thus can be seen as  a  rooted tree. The alphabet consists of   pairs {of open cosets of $G$. Informally, the $n$-th level of the tree consists of all the pairs of approximations to some element  $g$ and its inverse such that $g A_i g^{-1} = \Phi(A_i)$ for $i< n$. More formally,  we let} 
Let $(A_n)_{n \in \omega}$ be a listing of the open cosets of~$G$ without repetition. Given $\Phi \in \Aut(G)$,  we will define a tree $T_\Phi$ on the alphabet consisting of pairs {of open cosets of $G$}. 
%We define $T_\Phi$ by defining, for every $n < \omega$, the $n$-th level of the tree, denoted as $T^n_\Phi$. $T^n_\Phi$ consists of all the pairs of approximations BRRRRRR 
For this we define $T^n_\Phi$, the $n$-th level of the tree for every $n < \omega$. The $n$-level  consists of all the pairs of approximations to some element  $g$ and its inverse such that $g A_i g^{-1} = \Phi(A_i)$ for $i< n$. Formally, we let:
	\[ T^n_\Phi= \{ \langle (B_i, C_i )\rangle_{i<n} \colon \Phi(A_i)= B_i \cdot A_i \cdot C_i^{-1}   \land \bigcap_{i <  n} (B_i \cap C_i) \neq \emptyset \}.\]
	Note that the domains of   open cosets $B_i$ and $C_i$ as  in this definition are determined by the condition that $\Phi(A_i)= B_i \cdot A_i \cdot C_i^{-1}$.    Also note that $|N_G(U): U| $ is finite for each open subgroup $U$  because each two-sided coset  of $U$ is     a double coset of~$U$. This is implied by our  hypothesis that $G$ is Roelcke precompact  (Definition~\ref{def_precompact}); as mentioned,  in fact the presumably weaker condition is sufficient.
	%(notice that here we are using that $G$ is Roelcke precompact, cf. Definition~\ref{def_precompact}).   Jamming in
	Fact~\ref{fact:conjugates} now shows that  $T_\Phi$ is   finitely branching.

	\smallskip
	
	\n {\bf Claim.} \emph{$\Phi \in \Aut(G)$ is inner if, and only if,   $T_\Phi$ has an  infinite path.}   
	\smallskip
	  
	\n Assuming  the claim, we conclude the argument as follows. By K\"onig's Lemma, $T_\Phi$ has an infinite path iff each of its levels  $T^n_\Phi $   is nonempty. Whether   $T^n_\Phi $ is nonempty only depends on the values   $\Phi(A_0)$, $\ldots$, $\Phi(A_{n-1})$, so the set of such $\Phi$ is clopen in $\Aut(G)$. Thus the condition on $\Phi$  that each level of $T_\Phi$  is nonempty is closed.

	\smallskip \noindent It remains to verify the claim. Suppose $A_i$  is a right coset of a subgroup $U_i$, and a left coset of a subgroup $V_i$. 
	
	For  the implication from \textit{left to right}, suppose that there is $g\in G$ such that    $\Phi(A)= g A g^{-1}$ for each open coset~$A$.  Now let $B_i = g U_i$ and $C_i = gV_i$.  Then $ ( \langle B_i, C_i )\rangle_{i\in \omega}$ is an infinite path on $T_\Phi$: we have  $g \in  \bigcap_{i <  n} (B_i \cap C_i)$ for each $n$, and \
	$B_i \cdot A_i \cdot C_i^{-1}= g U_i A_i V_i g^{-1} = g A_i g^{-1} = \Phi(A_i)$.
%	[ \begin{array}{rcl}	
%		gU_i A_i V_ig^{-1}
%		& = & gU_i h_i V_i g^{-1} \\
%		& = & gA_i V_i g^{-1}\\
%		& = & gk_i V_i g^{-1} \\
%		& = & g A_i g^{-1}.
%	\end{array} \] 
%	not first course in group theory

	\smallskip \noindent For  the implication from \textit{right  to left}, let $f $ be an infinite path on $T_\Phi$, and write $\langle B_i, C_i \rangle = f(i)  $.
	For each $n$, choose $g_n \in  \bigcap_{i <  n} (B_i \cap C_i)$. We will show that $\lim_n g_n$ and $\lim_n (g_n^{-1})$ exist. Given $m$,  let  $r=r(m) $ be the number such that
	\[A_{r(m)} = G_{(0, \ldots, m-1)},\] 
	where $G_{(0, \ldots, m-1)}$ denotes the pointwise stabilizer of $\{0, ..., m-1\}$ in $G$. Note that  $B_r$ and $C_r$ are    left cosets of $  G_{(0, \ldots, m-1)}$ since $B_r \cdot A_r$ and $A_r \cdot C_r^{-1}$ are  defined. If $n \geq r$ then for each $i< m$, we have $g_n(i) = g_{r}(i) $, because $g_n^{-1} g_{r} \in B_r^{-1}  \cdot B_r    = G_{(0, \ldots, m-1)}$, Similarly,  $g^{-1}_n(i) = g^{-1}_{r}(i) $ because $g_{r} g_n^{-1}\in C_r^{-1} \cdot C_r=  G_{(0, \ldots, m-1)}$.
Thus   $g(i):= \lim_n g_n(i)$ and  $h(i):= \lim_n g^{-1}_n(i)$ exist  for each $i$, as required.

Clearly the functions $g \colon \omega \to \omega$ and $h \colon \omega \to \omega$ are inverses of each other. Thus  $g \in G$, as $g$ is  a permutation that is a pointwise limit of elements of $G$.

For each $i \in \omega $ we have $g = \lim_{n> i } g_n  \in B_i \cap C_i$ because $B_i \cap C_i$ is closed.  	Thus  $B_i= g U_i$ and $C_i = gV_i$, and  we have  for each~$i$ \begin{center} $g A_i  g^{-1}= g (U_i \cdot A_i  \cdot V_i) g^{-1} =  B_i \cdot A_i \cdot C^{-1}_{i}  = \Phi(A_i)$,  \end{center}  so $\Phi$ is inner.
	\end{proof}
	
%	\smallskip \noindent Given $\ell \in \omega$,  we verify that  $g \in B_\ell$.
%	There is $m$ with $r(m) > \ell$ such that $G_{(0, \ldots, m-1)}$ is contained in the domain $U$  of $B_\ell$ (in the sense of categories as discussed before  Fact~\ref{fact:conjugates}). 
%	We have   $g(i)  =g_{r(m)}(i)  $ for each $i< m$. Thus $g^{-1} g_{r(m)}\in G_{(0, \ldots, m-1)} \sub U$. Since $g_{r(m)} \in B_\ell$, this implies that $B_\ell  = g_{r(m)}U = gU $, so $g \in B_{\ell}$ as required.   Similarly, there is  $m$ with $r(m) > \ell$ such that $G_{(0, \ldots, m-1)}$ is contained in the codomain of $C_\ell$, which equals the   domain of $C^{-1}_\ell$. Thus,  arguing as before, $g \in C_\ell$ for each $\ell \in \omega$. 

\begin{remark}  \label{rem:top Inn} The topology on $\Inn(G)$ inherited from $\Aut(G)$ is the expected one. Indeed, the centre $Z=Z(G)$ of   $G$ is   closed. We have a Polish   topology on 
	$ G /Z$ by declaring $U Z /Z$ open iff $UZ$ is open in $G$. %, note that  $G/Z$ into a non-Achimedean group. 
	The canonical  isomorphism $L \colon G/Z \to \Inn(G)$ is continuous. Since $\Inn(G)$ is Polish as a closed subgroup of $\Aut(G)$,  $L$ is a homeomorphism by a standard result in the theory of Polish groups (see~\cite[2.3.4]{gao}). \end{remark}

We note that a hypothesis such as Roelcke precompactness is needed in~\ref{prop: Inn closed}. Wu~\cite{Wu:71}  gave an example of a discrete  group $L$ such that $\Inn(L)$ is not closed in $\Aut(L)$. A slight modification yields a nilpotent-2 group of exponent 3.   (This  group resembles   the groups  recently studied in \cite{Nies.Stephan:24}; in particular, it is finite automaton  presentable.)
 
 \begin{example}[Similar to Example 4.5 in~\cite{Wu:71}] {\rm Let $G$ be generated by  elements    $a_i,b_i, c$ order $3$ ($i \in \mathbb N)$, where $c$ is central, with the relations, for $i \neq k$  \begin{center} $b_ia_i b_i^{-1}= a_ic $, $[a_i,a_k] = [a_i,b_k]= [b_i,b_k]= 1$. \end{center}  
 Let $\Phi$ be the automorphism  of $L$ given by $\Phi(a_i)= a_i c$, $\Phi(b_i)= b_i$, and $\Phi(c) =c$. It  is not inner, but in the closure of $\Inn(G)$.}
 \end{example}
 \begin{proof} To  check that $\Phi$ is indeed in $\Aut(G)$, note that  $c$ can be omitted from the list of generators. Given a   word $w$ in $a_i,b_i$  where each letter occurs with exponent $1$ or $2$,  one uses that  $\Phi(w)= wc^{m \mod 3}$, where $m$ is the number of occurrences of   $a_i$'s. The inverse of $\Phi$ is given by $a_i \mapsto a_i c^{-1}$ and  as before $b_i \mapsto b_i$, $c \mapsto c$. 
 
 Write  $g_n = \prod_{i< n} b_i$. We have  $g_n a_i g_n^{-1}= a_ic$ for each $i< n$, and $g_n b_ig_n^{-1} = b_i$ for each $i$. Letting $\Phi_n$ be conjugation by $g_n$, we have $\lim_n \Phi_n = \Phi$ in $\Aut(G)$. 
 	For each $g\in G$, conjugation by $g$ fixes almost all the generators. So $\Phi$ is not inner. 
 \end{proof}

\section{Some relevant model-theoretic concepts with  applications} \label{s:modelty}
The language we use in this paper  is mainly   from the areas of  topological groups and permutation groups.   The following   well-known fact    can be used as a mechanism to translate from   group theoretic language  to model theoretic language and back. The notation  $M_G$  below should not be confused with the notation of a coarse group $\mathcal{M}(G)$ from~\cite{coarse}.

\begin{fact}[see \cite{hodges}, Theorem 4.1.4] \label{df: MG} Let $G$ be oligomorphic. We have    $G= \Aut(M_G)$ for the $\aleph_0$-categorical relational structure $M_G$ that has an $n$-ary relation $P^n_i$ for each $n$-orbit of $G$. In this context one says that $M_G$ is a \emph{canonical structure} for $G$; it is unique up to  permuting the names of  the $k$-orbits, for each $k$.  
\end{fact}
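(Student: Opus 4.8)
The plan is to prove the three assertions in turn—the group-theoretic equality $G=\Aut(M_G)$, the $\aleph_0$-categoricity of $M_G$, and the uniqueness clause—with almost all of the content concentrated in the equality. Throughout I would use that $G$ is a \emph{closed} subgroup of $\Sym(\omega)$, which is part of our standing setting for oligomorphic groups. The easy inclusion $G\subseteq\Aut(M_G)$ is immediate from the construction: each symbol $P^n_i$ is interpreted as a single $n$-orbit of $G$, hence as a $G$-invariant subset of $\omega^n$, so every $g\in G$ sends tuples in $P^n_i$ to tuples in $P^n_i$; thus $g$ preserves every basic relation and lies in $\Aut(M_G)$.

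The substantive step, and the one I regard as the crux, is the reverse inclusion $\Aut(M_G)\subseteq G$, which I would establish by a density argument exploiting the closedness of $G$. Let $h\in\Aut(M_G)$ and let $F=\{a_1,\dots,a_n\}\subseteq\omega$ be an arbitrary finite set; it suffices to produce some $g\in G$ agreeing with $h$ on $F$, for then $h$ lies in the closure of $G$ in the topology of pointwise convergence, and closedness of $G$ forces $h\in G$. To find such a $g$, note that the tuple $\bar a=(a_1,\dots,a_n)$ lies in exactly one $n$-orbit, say $P^n_i$. Since $h\in\Aut(M_G)$ preserves the relation $P^n_i$, the image $h(\bar a)=(h(a_1),\dots,h(a_n))$ also lies in $P^n_i$; that is, $\bar a$ and $h(\bar a)$ lie in the same $G$-orbit, so by the very definition of orbit there is $g\in G$ with $g(\bar a)=h(\bar a)$, which is precisely agreement of $g$ and $h$ on $F$. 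This is the point where closedness is indispensable: without it the argument would only yield $\Aut(M_G)=\overline G$ rather than $\Aut(M_G)=G$.

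With $G=\Aut(M_G)$ in hand, $\aleph_0$-categoricity follows from the Ryll--Nardzewski theorem (Engeler--Ryll-Nardzewski--Svenonius). The language of $M_G$ is countable, since $G$ has only finitely many $n$-orbits for each of the countably many $n$, and $\Aut(M_G)=G$ is oligomorphic; hence $M_G$ has finitely many orbits on $n$-tuples for every $n$ and is therefore $\aleph_0$-categorical. Finally, the uniqueness clause is built into the construction: the only freedom in forming $M_G$ is the choice of an enumeration $P^k_0,P^k_1,\dots$ of the finitely many $k$-orbits for each arity $k$, and two such enumerations produce the same structure after a permutation of the relation symbols of each arity. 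I expect no real difficulty in this last paragraph; the single point to record carefully is that $G$ is closed, so that the density argument above upgrades ``$h$ is a pointwise limit of elements of $G$'' to ``$h\in G$''.
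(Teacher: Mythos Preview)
Your argument is correct and is the standard proof of this well-known fact. Note, however, that the paper does not actually give its own proof of this statement: it is recorded as a \emph{fact} with a reference to Hodges, Theorem~4.1.4, and no argument is supplied. So there is nothing in the paper to compare your approach against; your proof simply fills in what the paper leaves to the literature, and it does so via exactly the expected route (the density argument using closedness for the nontrivial inclusion, then Ryll--Nardzewski for $\aleph_0$-categoricity).
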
   
The operation $G \mapsto M_G$ is easily seen to be Borel~\cite[Section 1.3]{coarse}. Recall that  for oligomorphic groups  $G,H$, we have that $G \cong H$ iff $M_G$ and $M_H$ are bi-interpretable (for a definition of bi-interpretability see \cite[Chapter~5]{hodges}).  The  model theoretical language  is of great importance in  proving  results in  the setting of oligomorphic groups.  For instance,  it is  not hard to see that bi-interpretability of $\aleph_0$-categorical structures is Borel~\cite[Section 1.3]{coarse}; so  topological isomorphism of oligomorphic groups  is also Borel  (where  naively it looks merely analytical).  Furthermore, using that  the operation $G \mapsto M_G$ is Borel, one can   verify that classes of groups such as the ones having no algebraicity (Definition~\ref{no alg}) are Borel by looking at the corresponding classes of structures. 

This section    uses model theoretic language to provide results that will be important later on. For instance, we get a grip on  the open cosets of $G$ by representing such a coset as a pair of imaginaries of the same sort.
% up to a certain form of interdefinability.  

\subsection{Imaginaries and $M^{\mrm{eq}}$}
  %An imaginary over  a structure $M$ is a class of a definably equivalence relation on~$M$.
  The structure  $M^{\mrm{eq}}$    of  {imaginaries} over a structure  $M$  goes back  to Shelah~\cite{classification}.We  follow the treatment in Hodges~\cite[p.\ 151]{hodges} (see also \cite[pg.\ 12-13]{evans}).
Recall that `definable' means $\emptyset$-definable by Convention~\ref{conv;basic convs}.  The structure $M^{\mrm{eq}}$ has sorts $S= D/E$,  where $D \subseteq M^k$ (for $1 \leq k < \omega$) is definable and $E$ is a definable equivalence relation on $D$; its elements are written as $\ol a /E$ for $\ol a \in D$. 
  %We will use $\aaa, \beta, \ldots$ for elements of $M^{\mrm{eq}}$. 
   The atomic   relations of $M^{\mrm{eq}}$  are the  atomic  relations of the given structure $M$ (viewed as relations on the  ``home sort"~$M$), as well as the graphs of the projections $D \to S=D/E$ for each sort $S$. 
   
   For sorts $S_i = D_i/E_i$, $i = 1, \ldots, n$, we have the product sort $\prod S_i  = \prod D_i / \prod E_i$,  where the equivalence relation $ \prod E_i$ is defined component-wise. 
 
 In a many-sorted structure, each definable relation is a subset of $\prod_{i< n} S_i$ for sorts  $S_0, \ldots, S_{n-1}$.  The   definable relations are given  as usual via   formulas in the   first-order logic in the corresponding signature  with variables for each sort.  Extending the usual  definition of algebraic/definable closure to a many-sorted structure is no problem. We note that when  $M$ is $\aleph_0$-categorical, $\text{acl}(A)  \cap S$ is finite for each finite $A \sub M^{\mrm{eq}}$ and  each sort $S$.

\subsection{Orbital structures and profinite groups}
%Recall  that   $G= \Aut(M_G)$ where $M_G$ is  the $\aleph_0$-categorical relational structure that has a $k$-ary relation for each $k$-orbit of $G$. 
 A \emph{reduct} of a structure $M$ is a structure $N$ with the same domain such that each relation or function of $N$ is definable in $M$. (In some references the notion of ``reduct'' is limited to restrictions of the language; the two notions can be reconciled  using  Morleyzation in the sense of  \cite[pg.~63]{hodges}.) %c.f. e.g.- aaahrg 
 
   The reduct relation is a  quasi-order among structures with a given domain;  the corresponding equivalence relation is    interdefinabilty, namely,  that two structures have the same definable relations.  It is well known that for $\aleph_0$- categorical structures $N,M$, $N$ is  a reduct  if $M$ iff $\Aut(M)$ is a subgroup of $\Aut(N)$. 
	\begin{definition}[Orbital structure]\label{def_EM} Consider the  language $\{\sim_n : 0 < n < \omega \}$ where for each $n$, the relation symbol $\sim_n$ is  of  arity $2n$. Given an $\aleph_0$-categorical structure $M$ with domain $\omega$, by $\+ E_M$  we denote the structure in this language such that  $\bar{a} \sim_n \bar{b}$ iff there is $g \in \mrm{Aut}(M)$ such that $g(\bar{a}) = \bar{b}$. One says that  $\+ E_M$ is the \emph{orbital structure} of $M$.
\end{definition} 
	It is easy to check  that the structure $\+ E_M$ from Definition~\ref{def_EM} is a reduct of $M$.
	
%	, to see this, let $n < \omega$ and let $\Delta_1, ... \Delta_{k_n}$ be the $\mrm{Aut}(M)$-orbits on $M^n$, then we have that  $\bar{a} \sim_n \bar{b}$ if and only if, for all $i \in [1, k_n]$,  $\bar{a} \in \Delta_i \leftrightarrow \bar{b} \in \Delta_i$, and so recalling that the $\Delta_i$'s are $\emptyset$-definable sets we are done. 
%	
%	
%	\gianluca{Added this. MAYSF.}
%\andre{everything in one sentence, not good.}

By $\+ N_G$ we denote the normaliser of $G$ in $\Sym(\omega)$.   By the following,      $\Aut(\+ E_M)$ is  the normaliser   of $G= \Aut(M)$ in $\Sym(M)$, the group of permutations of~$M$.
\begin{fact} \label{fa:NG} Let $M$ be an $\aleph_0$-categorical structure with domain $\omega$. Let $G = \Aut(M)$. 
 \begin{itemize} \item[ (i)] $\+ N_G= \Aut(\+ E_M)$.

 \item[(ii)] $\+ E_M$ is up to interdefinability the smallest reduct $L$ of $M$ such that $\mrm{Aut}(M) $ is a normal subgroup of $ \mrm{Aut}(L)$. \end{itemize}
\end{fact}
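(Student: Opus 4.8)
The plan is to prove (i) first by a direct orbit-permutation argument, and then read off (ii) from it using the Galois correspondence for $\aleph_0$-categorical structures. Throughout I would use the characterisation coming from Fact~\ref{df: MG} together with Ryll--Nardzewski: since $G=\Aut(M)=\Aut(M_G)$, a permutation $h\in\Sym(\omega)$ lies in $G$ if and only if it preserves every relation $P^n_i$, that is, if and only if $h$ fixes each $G$-orbit on $\omega^n$ setwise. I would also record the dual remark that the $\sim_n$-classes are exactly the $G$-orbits on $\omega^n$, so that $\sigma\in\Aut(\+ E_M)$ precisely when $\sigma$ maps $\sim_n$-classes bijectively onto $\sim_n$-classes, i.e.\ $\sigma$ permutes the $G$-orbits on each $\omega^n$.

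For the inclusion $N_G\subseteq\Aut(\+ E_M)$ I would take $\sigma\in N_G$ and suppose $\bar a\sim_n\bar b$, say $g\bar a=\bar b$ with $g\in G$. Then $\sigma g\sigma^{-1}\in G$ because $\sigma$ normalises $G$, and $(\sigma g\sigma^{-1})(\sigma\bar a)=\sigma\bar b$, so $\sigma\bar a\sim_n\sigma\bar b$; applying the same argument to $\sigma^{-1}\in N_G$ gives the converse implication, whence $\sigma$ preserves each $\sim_n$. For the reverse inclusion $\Aut(\+ E_M)\subseteq N_G$ I would take $\sigma\in\Aut(\+ E_M)$, so $\sigma$ permutes the orbits, and fix $g\in G$. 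To see that $h:=\sigma g\sigma^{-1}\in G$ it suffices to check that $h$ fixes each orbit $O$ setwise: for $\bar a\in O$, the tuple $\sigma^{-1}\bar a$ lies in the orbit $\sigma^{-1}(O)$, $g$ fixes that orbit, and $\sigma$ sends it back to $O$, so $h\bar a\in O$. Hence $h\in G$, and symmetrically $\sigma^{-1}g\sigma\in G$, so $\sigma\in N_G$. This proves (i).

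For (ii), I would first note that $\+ E_M$ is genuinely a reduct of $M$, since $\sim_n$ is defined by $\bigvee_i\bigl(P^n_i(\bar a)\wedge P^n_i(\bar b)\bigr)$. That $G\trianglelefteq\Aut(\+ E_M)$ is then immediate from (i): $\Aut(\+ E_M)=N_G$ and $G$ is by definition normal in its own normaliser. For minimality, let $R$ be any reduct of $M$ with $G\trianglelefteq\Aut(R)$. Since $R$ has domain $\omega$ we have $\Aut(R)\leq\Sym(\omega)$, and normality of $G$ in $\Aut(R)$ says exactly that every element of $\Aut(R)$ normalises $G$, i.e.\ $\Aut(R)\subseteq N_G=\Aut(\+ E_M)$. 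Because $R$ is a reduct of the $\aleph_0$-categorical structure $M$ it is itself $\aleph_0$-categorical, so by Ryll--Nardzewski every $\Aut(R)$-invariant relation is $0$-definable in $R$; as each $\sim_n$ is invariant under $\Aut(R)\subseteq\Aut(\+ E_M)$, it is definable in $R$, whence $\+ E_M$ is a reduct of $R$.

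The argument is largely bookkeeping, and I do not expect a serious obstacle. The one place that genuinely demands care is the passage in (ii) from the group inclusion $\Aut(R)\subseteq\Aut(\+ E_M)$ to the $0$-definability of each $\sim_n$ in $R$: this is precisely where the $\aleph_0$-categoricity of $R$ (hence the Galois correspondence) is indispensable and cannot be dropped, and I would flag that the hypothesis $G\trianglelefteq\Aut(R)$ is used only through its weaker consequence $\Aut(R)\subseteq N_G$.
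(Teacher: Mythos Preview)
Your proposal is correct and follows essentially the same approach as the paper: both prove (i) by the direct orbit-permutation computation you give, and both derive (ii) from (i) via the Galois correspondence between reducts of $M$ and closed overgroups of $\Aut(M)$. The only difference is that you spell out the Ryll--Nardzewski step (from $\Aut(R)\subseteq\Aut(\+ E_M)$ to definability of $\sim_n$ in $R$) explicitly, whereas the paper compresses this into a single ``iff''.
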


 %(recall that the reduct relation is a partial order, as observed right before Definition~\ref{def_EM}).
 
\begin{proof} (i) To show that $\+ N_G\le  \Aut(\+ E_M)$, let $h \in \+ N_G$. 
 %(see  \ref{def_EM}):  
 Suppose that $g \cdot \ol a = \ol b$ where $g \in   G$ and $\ol a,\ol b\in M^n$.  Let $g' = g^h \in G$. Then $g' h \cdot \ol a = h \cdot \ol b$.  So  $h $ preserves the relations~$\sim_n$.

 For the converse inclusion, we may assume that $M= M_G$. If $h \in \Aut(\+ E_M)$ and $g$ preserves each relation $P^n_i$, then so does $g^h$. Thus $h \in \+ N_G$.  
 
 \n (ii) Let $R$ be a reduct of $M$. By definition of a normaliser, $G$  is normal in $\Aut(R)$  iff $\Aut(R) \le \+ N_G$. By  (i) this is  equivalent to  $\Aut(R) $ being contained in  $\Aut(\+ E_M)$, which in turn is equivalent to   $\+ E_M$ being  a reduct of $R$.
  \end{proof}

	\begin{thm}\label{profinite_lemma} Let $M$ be an $\aleph_0$-categorical structure. Then $V=\mrm{Aut}(\mathcal{E}_M)/\mrm{Aut}(M)$ is a profinite group. If $M$ is interdefinable with a structure in a   finite signature, then $V$ is finite.
\end{thm}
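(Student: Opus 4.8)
The plan is to use the identification $N_G=\Aut(\+E_M)$ from Fact~\ref{fa:NG}(i), so that $V=N_G/G$ with $G=\Aut(M)$ a closed normal subgroup of the non-Archimedean Polish group $N_G$; hence $V$ is itself a non-Archimedean Polish group, and it suffices to prove it is compact. The geometric input is that $N_G$ acts on the \emph{finite} set $\Omega_n$ of $n$-orbits of $G$, for each $n$: if $h\in N_G$ then $g^{h}\in G$ for all $g\in G$, so from $g\bar a=\bar b$ we get $h\bar b=g^{h}(h\bar a)$, whence $\bar a\sim_n\bar b\Rightarrow h\bar a\sim_n h\bar b$; thus $h$ permutes the $G$-orbits, and $\aleph_0$-categoricity gives $|\Omega_n|<\infty$.

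First I would exhibit a neighbourhood basis of the identity of $V$ consisting of finite-index open subgroups. For finite $F\sub\omega$, since $G$ is normal the set $(N_G)_{(F)}\,G$ is an open subgroup of $N_G$, and the images $(N_G)_{(F)}\,G/G$ form a neighbourhood basis of the identity of $V$: any open neighbourhood of the identity of $V$ pulls back to an open, $G$-saturated subset of $N_G$ containing some $(N_G)_{(F)}$, and being $G$-saturated it then contains $(N_G)_{(F)}\,G$.

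The crux is the finiteness $[N_G:(N_G)_{(F)}\,G]<\infty$, which I would prove by orbit–stabiliser. Let $\bar c$ enumerate $F$ and let $[\bar c]\in\Omega_{|F|}$ denote its $G$-orbit. For the action of $N_G$ on $\Omega_{|F|}$ one computes $\mathrm{Stab}_{N_G}([\bar c])=\{h\in N_G:h\bar c\sim_{|F|}\bar c\}=G\,(N_G)_{(F)}=(N_G)_{(F)}\,G$, since $h\bar c\sim_{|F|}\bar c$ means $gh\bar c=\bar c$ for some $g\in G$, i.e.\ $gh\in (N_G)_{(F)}$. Hence the index equals the size of the $N_G$-orbit of $[\bar c]$, which is at most $|\Omega_{|F|}|<\infty$. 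Thus $V$ is a Polish non-Archimedean group with a neighbourhood basis of finite-index open subgroups; passing to the (open, finite-index, normal) cores and using that Polish groups are Raikov-complete, $V$ is the inverse limit of its finite quotients, i.e.\ profinite. This finite-index computation is the main obstacle; the rest is routine topology of non-Archimedean groups, and the whole point is recognising $(N_G)_{(F)}\,G$ as a point stabiliser for the $N_G$-action on the finite orbit set $\Omega_{|F|}$.

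For the finite-signature claim, note that interdefinable structures share the same automorphism group, hence the same orbits and the same orbital structure, so I may assume $M$ itself has a finite signature, with all basic relations (replacing functions by their graphs) of arity $\le r$. Each such relation is a union of members of $\bigsqcup_{n\le r}\Omega_n$, so any $h\in N_G$ fixing every $G$-orbit of arity $\le r$ setwise preserves all basic relations and therefore lies in $\Aut(M)=G$. Thus $G$ is precisely the kernel of the action of $N_G$ on the finite set $\bigsqcup_{n\le r}\Omega_n$, an open subgroup of finite index in $N_G$, and so $V=N_G/G$ is finite.
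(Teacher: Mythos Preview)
Your proof is correct and takes a genuinely different route from the paper's. Both arguments rest on the same geometric fact---that $N_G$ permutes the finite set $\Omega_n$ of $n$-orbits of $G$---but exploit it differently. The paper builds an explicit inverse system $(\Aut(\mathcal E_n)/\Aut(M_n),p_n)$ of finite groups (each embedding in $S_{k(n)}$), and then proves $V$ is isomorphic to its inverse limit; injectivity is easy, but surjectivity requires the Compactness Theorem to produce, from a coherent sequence of orbit-permutations, an actual element of $\Aut(\mathcal E_M)$. Your argument bypasses this model-theoretic step: the orbit--stabiliser identification $\mathrm{Stab}_{N_G}([\bar c])=(N_G)_{(F)}\,G$ immediately gives the finite index, and then general topology (Raikov-completeness of Polish groups, or equivalently that a Polish subgroup of a compact metrizable group is closed) delivers compactness. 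Your approach is shorter and more purely group-theoretic; the paper's is more constructive, exhibiting $V$ concretely as $\varprojlim_n \Aut(\mathcal E_n)/\Aut(M_n)$, which makes the finite-signature clause transparent (the system is eventually constant). Your treatment of the finite-signature case via the kernel of the $N_G$-action on $\bigsqcup_{n\le r}\Omega_n$ is equally clean and reaches the same conclusion.
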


	\begin{proof} Since $M$ is interdefinable with $M_{\Aut(M)}$,   we may assume that   for every $n$ with $0 < n < \omega$, the structure $M$ has   predicates $P^n_1, ..., P^n_{k(n)}$ for each of its $n$-orbits, and no other relations or functions. For   $n \leq \omega$ let $M_n$ be the structure obtained after removing the predicates of arity greater than $n$, and let $\mathcal E_n:= \mathcal E_{M_n}$ be the corresponding orbital structure as in Definition~\ref{def_EM}. 
	
\smallskip \noindent	 For $k \leq m \leq \omega$ the identity map  $\mathcal E_{m} \to \mathcal E_k $  induces a map $$q_{m,k} \colon \Aut(\+ E_m )/ \Aut (M_m) \to \Aut(\+ E_k )/\Aut(M_k).$$  Writing $p_n= q_{n+1,n}$, this yields an inverse system %\andre{don't put colon !!!! }
	\begin{equation*} \tag{$*$}
(\mrm{Aut} (\+ E_n)/\Aut(M_n) , p_n)_{n \in \mathbb N}.  
	\end{equation*}
	The group $\mrm{Aut}(\mathcal{E}_n)/\mrm{Aut}(M_n)$ is canonically isomorphic to a subgroup of $S_{k(n)}$  (the group of permutations of $\{1, \ldots, k(n)\}$): an isomorphism is induced by  mapping  $\pi \in \mrm{Aut}(\mathcal{E}_n)$ to the permutation $\alpha\in S_{k(n)}$ such that $\pi(P^n_i)= P^n_{\alpha(i)}$ for each $i \leq k(n)$. Let 
  $$  L: =  \varprojlim_n (\mrm{Aut} (\+ E_n)/\Aut(M_n) , p_n)$$
  be the inverse limit in the category of topological groups, where the finite groups $\mrm{Aut}(\mathcal{E}_n)/\mrm{Aut}(M_n)$ carry the discrete topology.  Every $f \in L $ can be concretely seen as a certain sequence of permutations $(\alpha_n)_{0< n < \omega}$ where $\alpha_n\in S_{k(n)}$. 
 \emph{We claim  that $V \cong L$ via the continuous homomorphism  $F\colon V\to L$ induced by the   maps $q_{\omega, n} : V \to \Aut(\+ E_n )/\Aut(M_n)$}. To verify this,  we   invoke the universal property of the inverse limit. To see that   $F$ is 1-1, suppose $\pi \in \Aut(\+ E_\omega) \setminus \Aut(M)$. Then there is $n$ and $i \leq k(n)$  such that $\pi(P^n_i)= P^n_j$ with $i \neq j$. So the image of $\pi \Aut(M)$ in  $\mrm{Aut} (\+ E_n)/\Aut(M_n)$ is not equal to the identity. Thus $F(\pi \Aut(M))$ is not equal to the identity. 
	
\smallskip \noindent		To see that $F$ is onto,  given $f\in L$,  let $(\alpha_n)_{0< n < \omega}$ be as above, and  let $T_f$ be the theory in the language of $M$ extended by  an additional function symbol $\pi$, and with the following set of  axioms: 
  
 \begin{enumerate}[(1)] \item  $\mrm{Th}(M)$;
\item ``$\pi$ is a permutation"; 
  
\item  ``$\pi(P^n_i)= P^n_{\alpha_n(i)}$,   for each $n$ and each $i \leq k(n)$".\end{enumerate}
Note that $T_f$ is consistent  by the Compactness Theorem, using  the assumption that each $\alpha_n$ is induced by an automorphism $\pi_n$ of $\+ E_n$. Let $N$  be  any countable model   of $T_f$, and let $N'$ be the reduct of $N$ to the language of $M$. Since $M$ is $\aleph_0$-categorical, there is an isomorphism $h \colon M \to N'$. We have  $g=h^{-1} \circ \pi^N \circ h \in \Aut(\+ E_M) $ and $F( g \Aut(M))= f$. As $f\in L$  was arbitrary this shows that $F$ is onto. 

\smallskip \noindent The group isomorphism $F^{-1}\colon L\to V$ is clearly Borel, so it  is   continuous as a Borel (and hence Baire measurable) homomorphism between Polish groups; see, e.g., \cite[Th.\ 2.3.3]{gao}.

\smallskip \noindent Suppose  the structure $M$ is interdefinable with a structure in  a finite signature, and let $n$ be the maximum arity of a symbol in the signature. Then for each $r \ge n$, the structure $M_r$ is an extension by first-order definitions of $M_n$, and hence $\Aut(M_r)= \Aut(M_n)$, and  $\Aut(\mathcal E_r)= \Aut(\mathcal E_n)$. Hence   the inverse system 
$(*)$ above  is eventually constant,   whence its  inverse limit is finite.
\end{proof}

%\begin{remark} The profinite group  $L$ can be  described purely in terms of the theory of $M$.  From $T= \mrm{Th}(M)$ we build a structure $\+D_T$ with infinitely many sorts $D_n$ such that $|D_n| = k(n)$ is the number of $n$-types of $T$.  For each $n$ and each $i \le n$ there are functions $f^n_i$	
%	we can obtain the ternary  function $\theta$  on $\omega$      such that, for each positive $n$,   $r \leq k(n)$ and $i \leq n+1$,   the relation obtained by erasing the $i$-th component of the tuples in the $(n+1)$-orbit $P^{n+1}_r$ equals $P^n_{\theta(n,r,i)}$. Then   $L$  consists of the sequences of permutations  $(\alpha_n)_{0< n < \omega}$ that cohere with taking these projections in the sense  that, for each $n,r,i$ as above, we have \[\aaa_n(\theta(n,r,i))= \theta (n,\aaa_{n+1}(r),i).\]
%For  let $N$ be the model such that $(P^n_i)^N= (P^n_{\aaa_n(i)})^M$ for each $n$ and $i \le k(n)$. Then $N \models \mrm{Th}(M)$ because the theory has quantifier elimination. Since the theory is $\aleph_0$-categorical, there is an isomorphism $\pi \colon M \to N$. Then $\pi \in \Aut (\+ E_M)$ and $\pi \Aut(M)$ is mapped to the sequence  $(\alpha_n)_{0< n < \omega}$. 
%\end{remark}
% 
   
	\begin{cor}\label{profinite_lemma+} Let $M$ be $\aleph_0$-categorical. Let $L$ be a reduct of $M$ such that $\mrm{Aut}(M) $ is a normal subgroup of $ \mrm{Aut}(L)$. Then $\mrm{Aut}(L)/\mrm{Aut}(M)$ is a profinite group. 
\end{cor}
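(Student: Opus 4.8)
The plan is to deduce this from Theorem~\ref{profinite_lemma} together with the minimality of $\+ E_M$ recorded in Fact~\ref{fa:NG}. First I would observe that, since $\mrm{Aut}(M)$ is normal in $\mrm{Aut}(R)$ by hypothesis, Fact~\ref{fa:NG}(ii) forces $\+ E_M$ to be a reduct of $R$. Passing to a reduct only relaxes the relations that an automorphism must preserve, so $\mrm{Aut}(R) \subseteq \mrm{Aut}(\+ E_M)$, and by Fact~\ref{fa:NG}(i) the latter equals $N_G$. Writing $G = \mrm{Aut}(M)$, both $\mrm{Aut}(R)$ and $N_G$ contain $G$ as a normal subgroup, so $\mrm{Aut}(R)/G$ is naturally a subgroup of $V = N_G/G$, which is profinite by Theorem~\ref{profinite_lemma}.

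The one substantive point, which I expect to be the main obstacle, is to check that $\mrm{Aut}(R)/G$ is in fact a \emph{closed} subgroup of $V$; granting this, the result follows because a closed subgroup of a profinite group is profinite. To establish closedness I would let $\pi \colon N_G \to V$ be the canonical quotient map, which is a continuous open surjection and hence a topological quotient map. Since $G \subseteq \mrm{Aut}(R)$, the subgroup $\mrm{Aut}(R)$ is saturated with respect to $\pi$, i.e.\ $\pi^{-1}(\mrm{Aut}(R)/G) = \mrm{Aut}(R)$. As $\mrm{Aut}(R)$ is the automorphism group of a structure on $\omega$, it is closed in $\Sym(\omega)$, and therefore closed in $N_G$; because $\pi$ is a quotient map, its image $\mrm{Aut}(R)/G$ is then closed in $V$, as required.

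Everything apart from this closedness verification is routine bookkeeping with Fact~\ref{fa:NG} and Theorem~\ref{profinite_lemma}; in particular no new model theory is needed, the corollary being essentially the assertion that the single profinite group $V$ controls every reduct of $M$ that normalises $G$.
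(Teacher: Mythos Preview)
Your proposal is correct and follows essentially the same route as the paper's proof: use Fact~\ref{fa:NG} to get $\mrm{Aut}(R)\subseteq\mrm{Aut}(\mathcal{E}_M)$, note that $\mrm{Aut}(R)$ is closed in $\Sym(\omega)$, and conclude that $\mrm{Aut}(R)/\mrm{Aut}(M)$ is a closed subgroup of the profinite group $\mrm{Aut}(\mathcal{E}_M)/\mrm{Aut}(M)$. The paper's version is simply terser, leaving the saturation/quotient-map justification of closedness implicit where you spell it out.
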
 
% 
% \andre{makes not sense  in general, e.g. when N=  trivial reduct, because  Aut(M) is not normal in $\Sym(\omega)$. 
% With  normality as a hypothesis it works and this is all we need!}
% 
 \begin{proof}  $\mrm{Aut}(L)$ is contained  $\Aut(\+ E_M)$ by Fact~\ref{fa:NG}. 
Since $\Aut(L)$ is also closed in $\Sym(\omega)$,  we conclude that  $\mrm{Aut}(L)/\mrm{Aut}(M)$ is profinite as a closed subgroup of $\mrm{Aut}(\+ E_M)/\mrm{Aut}(M)$. \end{proof}

%\gianluca{Would you agree with the statement that $\+ E_M$ is the smallest reduct $R$ of $M$ such that $\mrm{Aut}(M) $ is a normal subgroup of $ \mrm{Aut}(R)$? Isn't this what we are proving? If so a sentence like the one I suggest seems sub-basic to me.} \andre{you mean smallest in terms of the definability preordering? Yes this follows from Fact 2.3, should I put it? Might be known to people like Evans. We should definitely get his comments on the paper, also Pinsker/Bodirsky and perhaps Schlicht}

%\andre{If you divide a top group $G$ by a closed normal subgroup $N$, the projection map $p$ is open.  So the image of a closed subgroup $C$ of $G$ that contains $N$ is closed in $G/N$, as $p(G/C) = G/N \setminus p(C)$.}

% 
%	\begin{fact}\label{no_alg} If $M$ is $\aleph_0$-categorical with no algebraicity, then $\mrm{Aut}(M)$ has trivial center. 
%\end{fact}

\subsection{Open cosets and pairs of imaginaries}  
\label{ss:open coset}  
 In    this section, $G$ will be an oligomorphic closed subgroup of $\Sym(\omega)$.  Recall that  $G= \Aut(M)$ where $M=M_G$ is the canonical structure for $G$. Note that  $G$ acts naturally on $M^{\mrm{eq}}$.   Here we describe  open cosets and the inclusion relation between them model-theoretically via $M^{\mrm{eq}}$.  We begin with open subgroups.
 The following technical notion will  be needed in Section~\ref{s: no alg}. 
  \begin{definition}[\cite{rubin}] \label{df:degen} Let $M$ be a set, $D \subseteq M^n$ for $n \geq 1$,  and let $E$ be an equivalence relation on $D$. The relation $E$ is called \emph{degenerate} if for some   set $\sigma \subsetneq n$, the equivalence relation $E_{\sss,D}= \{\langle  {a},  {b}\rangle  \in D^2 \colon \forall i \in \sigma \, [   a_i=  b_i]\}$ is contained in $E$. Otherwise $E$ is called \emph{non-degenerate}.
\end{definition}

For $\alpha \in M^{\mrm{eq}}$, let $G_\alpha = \{ g \in G \colon g(\alpha) = \alpha \}$.   
 The  open subgroups coincide with the groups of the form $G_\aaa$ by the following  well-known fact that is  implicit  in the proof of~\cite[Theorem 1.2]{biinte}.  The present form is close to~{\cite[Theorem~2.7]{evans}}.
  %See Theorem 2.7 in   \href{http://wwwf.imperial.ac.uk/~dmevans/Bonn2013_DE.pdf}{ David Evans' 2013  notes}.
 % $U \le_o G$ denotes that $U$ is an open subgroup of $G$. 

  \begin{lemma}\label{evans_fact} \label{fac1} Let $H$ be an open subgroup of $G$.    Suppose that     $G_{\bar{a}} \leq H$ where $\bar{a} \in M^n$  (note that  such an  $\bar a$ exists as $H$ is open). 
\begin{enumerate}[(i)]
\item There are  definable relations $D \subseteq M^n$ and  $E \sub M^{2n}$ such that  $E$ induces an equivalence relation  on $D$, and  $H = G_{\bar{a}/E}$ for some $\bar a \in D$. 

\item  For every $\bar{a} \in M^n$, only finitely many open subgroups $H$ of $G$   contain $G_{\bar{a}}$.

\item  If  a tuple $\bar a$ is chosen of minimal length such that $G_{\bar a} \leq H$, then    $E$, seen as an equivalence relation on $D$, is  non-degenerate. \end{enumerate} \end{lemma}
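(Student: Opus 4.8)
The plan is to reduce all three parts to the classical correspondence, valid for any transitive action, between subgroups lying above a point stabiliser and the invariant congruences on the orbit, and then to turn \emph{invariance under $G$} into \emph{$0$-definability} by $\aleph_0$-categoricity (Ryll--Nardzewski). Throughout I would fix $D = G\bar a \sub M^n$, the $G$-orbit of $\bar a$; since $D$ is a single orbit it is $G$-invariant and hence $0$-definable, so it is a legitimate candidate for the definable set in the statement.

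For (i) the starting point is the identity $g \in H \iff g\bar a \in H\bar a$, which holds because $G_{\bar a} \leq H$; thus $H$ is recovered from the single $H$-orbit $O = H\bar a \sub D$. I would then define a relation $E$ on $D$ by declaring $g_1\bar a \mathrel E g_2\bar a$ exactly when $g_1^{-1}g_2 \in H$. This is well defined (independence of the representatives uses $G_{\bar a} \leq H$ together with the fact that $H$ is a subgroup), it is plainly an equivalence relation, and it is $G$-invariant, since $(gg_1)^{-1}(gg_2)=g_1^{-1}g_2$. Invariance gives $0$-definability as a subset of $M^{2n}$. By construction the $E$-class of $\bar a$ is precisely $O$, so $G_{\bar a/E}=\{g : g\bar a \in O\}=H$, and passing to the sort $S=D/E$ with the imaginary $\bar a/E\in S$ yields the required presentation.

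For (ii) I would observe that the assignment $H \mapsto E$ just built is injective: from $E$ one reads off the class $O$ of $\bar a$, and then $H=\{g:g\bar a\in O\}$. Each such $E$ is a $G$-invariant subset of $M^{2n}$, and since $G$ is oligomorphic there are only finitely many $G$-orbits on $M^{2n}$, hence only finitely many $G$-invariant (equivalently $0$-definable) subsets; this bounds the number of open subgroups containing $G_{\bar a}$. For (iii) I would argue contrapositively: if $E$ is degenerate, witnessed by some $\sigma \subsetneq n$ with $E_{\sigma,D}\sub E$, then any $g$ fixing the subtuple $\bar a_\sigma$ (the restriction of $\bar a$ to the coordinates in $\sigma$) sends $\bar a$ to a tuple of $D$ agreeing with $\bar a$ on $\sigma$, hence $E$-equivalent to $\bar a$; therefore $g \in G_{\bar a/E}=H$. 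This gives $G_{\bar a_\sigma}\leq H$ with $|\sigma| < n$, contradicting the minimality of the length of $\bar a$. Note this last computation uses only $H=G_{\bar a/E}$, so it applies to whatever $E$ was produced in (i).

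The main obstacle, and the only genuinely non-formal step, is the passage from $G$-invariance to $0$-definability of $E$ in (i): this is exactly where $\aleph_0$-categoricity is indispensable, and it is what makes the otherwise purely group-theoretic congruence into an honest sort of $M^{\mrm{eq}}$. The remaining points — the orbit bookkeeping in (i), the injectivity in (ii), and the short degeneracy argument in (iii) — are routine once this input is available. I would take some care to record that extending $E$ from $D^2$ to all of $M^{2n}$ preserves $0$-definability, which is immediate because $D$ itself is $0$-definable.
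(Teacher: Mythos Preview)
Your proposal is correct and follows essentially the same approach as the paper: you take $D=G\bar a$, define $E$ on $D$ by $g_1\bar a\mathrel{E}g_2\bar a\iff g_1^{-1}g_2\in H$, invoke $G$-invariance plus $\aleph_0$-categoricity for definability, deduce (ii) from the finiteness of $2n$-orbits, and argue (iii) by showing degeneracy via $\sigma$ gives $G_{\bar a_\sigma}\leq H$. Your additional remarks on well-definedness, injectivity of $H\mapsto E$, and extending $E$ to $M^{2n}$ are sound and slightly more detailed than the paper's version.
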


\begin{proof} (i)   Let $D = G\cdot \bar a$, and let the  equivalence relation $E$ on $D$ be given by 
  \[  h \bar a E g  \bar a  \leftrightarrow g^{-1}h \in H.\]
  Since $D$, as well as   $E$ viewed as a $2n$-ary relation,  are invariant under the action of $G$, they are   both  definable. Let $\alpha = \bar a/E$. 
  Then we have $G_\alpha = H$, because $g(\alpha) = \alpha $ iff $  g(\bar a ) E \bar a$ iff $g \in H$.
  
    \n (ii)  is   immediate from (i)  because $M$ has  only finitely many $2n$-orbits.

\n   (iii) Assume for a contradiction that  $E$ is degenerate via $\sigma$. We may assume that $\sigma=\{0, \ldots, n-2\}$. Let $\ol a' $ be $ \ol a$ with the last entry removed.  We claim that $G_{\ol a'} \leq H$. For let $h\in G_{\ol a'}$, and let $\ol b$ be the tuple $\ol a'$ with $h(a_{n-1})$ appended, so that  $\ol b = h(\ol a)$.  
Then \mbox{$\ol b \,  E_{\sigma, D} \, \ol a$} and hence $\ol b \, E \, \ol a$ hold,     so  $h \in H$ by the definition of $E$.
\end{proof}
 
  The containment relation between open  subgroups is described by relative definability in $M^{\mrm{eq}}$.   
 
  \begin{fact}  \label{fac2} For $\aaa, \beta \in M^{\mrm{eq}}$, we have 
 $G_\aaa \sub G_\beta $ $\Leftrightarrow$  $ \beta \in \text{dcl}(\aaa)$.  
 \end{fact}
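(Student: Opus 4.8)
The plan is to prove the two implications separately, reducing the substantive direction to the Ryll--Nardzewski characterisation of $0$-definable relations as the automorphism-invariant ones, applied inside $M^{\mrm{eq}}$. The implication $\beta \in \dcl(\alpha) \Rightarrow G_\alpha \sub G_\beta$ is immediate: by definition of $\dcl$ there is a formula $\phi(x,y)$ in the language of $M^{\mrm{eq}}$ whose instance $\phi(x,\alpha)$ has $\beta$ as its unique solution, and any $g \in G_\alpha$ fixes $\alpha$ and hence permutes the solution set $\{\beta\}$ of $\phi(x,\alpha)$, forcing $g(\beta)=\beta$, that is, $g \in G_\beta$. This direction uses nothing beyond invariance of definable sets under automorphisms.

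For the converse I would proceed as follows. Let $S_\alpha, S_\beta$ be the sorts of $\alpha,\beta$, and let $O \sub S_\alpha \times S_\beta$ be the $G$-orbit of the pair $(\alpha,\beta)$. Since $M$ is $\aleph_0$-categorical, $M^{\mrm{eq}}$ has only finitely many $G$-orbits on each finite product of sorts, so by Ryll--Nardzewski the $G$-invariant set $O$ is $0$-definable, say by a formula $\theta(y,x)$. The crucial observation is that the fibre $\{x : \theta(\alpha,x)\}$ is precisely the $G_\alpha$-orbit of $\beta$, because $(\alpha,\beta') \in O$ holds exactly when some $g \in G$ satisfies $g(\alpha)=\alpha$ and $g(\beta)=\beta'$. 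Under the hypothesis $G_\alpha \sub G_\beta$ this orbit collapses to $\{\beta\}$, so $\theta(\alpha,x)$ defines $\beta$ over $\alpha$, whence $\beta \in \dcl(\alpha)$.

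The hard part is justifying the two structural inputs for $M^{\mrm{eq}}$ in place of $M$: that $M^{\mrm{eq}}$ has finitely many $G$-orbits on each finite product of sorts, and that every automorphism of $M^{\mrm{eq}}$ comes from $G = \Aut(M)$, so that $G$-invariance is the correct notion for the Ryll--Nardzewski criterion on the sorts of $M^{\mrm{eq}}$. Both are standard. The first holds because a product of sorts is a $G$-equivariant quotient of a definable subset of some power $M^k$, which carries finitely many $G$-orbits by $\aleph_0$-categoricity of $M$; this is a standard consequence of $\aleph_0$-categoricity, closely related to the finiteness of $\acl$ on each sort recorded earlier in this section. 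The second is the usual coding of imaginaries by real tuples, which shows that every automorphism of $M^{\mrm{eq}}$ is the canonical extension of a unique element of $\Aut(M)$. With these in place the Ryll--Nardzewski definability criterion transfers verbatim to $M^{\mrm{eq}}$ and the argument above goes through.
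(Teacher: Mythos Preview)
Your proof is correct and follows essentially the same approach as the paper: both use that the $G$-orbit of the pair $(\alpha,\beta)$ is definable and that its fibre over $\alpha$ is the $G_\alpha$-orbit of $\beta$, which collapses to $\{\beta\}$ under the hypothesis $G_\alpha\sub G_\beta$. The only cosmetic difference is that the paper works with real representatives $\bar a,\tilde b$ and the orbit $R=G\cdot\langle\bar a,\tilde b\rangle$ in the home sort (so definability is immediate from $\aleph_0$-categoricity of $M$ itself), whereas you argue directly in $M^{\mrm{eq}}$ and invoke the transfer of Ryll--Nardzewski there.
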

 \begin{proof}
 The implication ``$\LA$'' is immediate.

\smallskip
\noindent For the implication ``$\RA$", suppose $\aaa = \ol a /E$ and $\beta = \widetilde b /F$ as above. Then 
 \begin{center}  $h \ol a  E g \ol a  \lra g^{-1}h \in G_\alpha$  and   $h \widetilde b  F g \widetilde b  \lra g^{-1}h \in G_\beta$.  \end{center}
 Let $R=  G \cdot \langle \ol a , \widetilde b\rangle$, and note that $R$ is a left invariant, and hence  definable, relation. 
 Since $G_\alpha \sub G_\beta$, 
 %and the orbit equivalence relation of $G_{\alpha}$ is definable from $\alpha$, 
 $\beta$ is  the unique equivalence class of $F$ containing $\widetilde  b '$ for some $\langle  \bar a',\widetilde b'\rangle \in R$ with $\bar a '\in \alpha$. 
Hence $\beta \in \text{dcl}(\aaa)$. 
% 
% (ii) If $U \sub V$ where $U = G_\aaa$, $\aaa = \ol a/E$,   by \cref{fac1}  we have  $V = G_\beta $ for some imaginary $\beta = \ol a /F$, and $E \sub F$. So the number of open subgroups containing $U$ is bounded by the number of definable equivalence relations on $G\ol a$ containing $E$.  
 \end{proof}

 %\noindent {\it Open cosets.}
 
 Next we represent open cosets by    pairs of imaginaries in the same orbit of $M^{\mrm{eq}}$.
 
 \begin{notation} \label{not:cosets} For imaginaries $\aaa_0, \aaa_1$ of the same sort of $M^{\mrm{eq}}$, let
 \[ [\aaa_0, \aaa_1] =  \{ g \in G \colon g(\aaa_1)= \aaa_0\}. \] \end{notation}
%\gianluca{As this notation is non-standard in my opinion it should be made into a "Notation" environment and refereed to later, e.g. in S2.5 you should refer to it.}
  If $ [\aaa_0 , \aaa_1] \neq \emptyset$, then  clearly $ [\aaa_0 , \aaa_1]$ is a right coset of $ G_{\aaa_0}$,  and    a left coset of $ G_{\aaa_1}$. Also $ [\aaa_1 , \aaa_0]=  [\aaa_0 , \aaa_1]^{-1}$.  We note that  for each  sort $S$, the  relation  $\{\langle \aaa_0, \aaa_1 \rangle  \in S^2 \colon [\aaa_0 , \aaa_1] \neq \emptyset \}$ is definable:  it says that $\aaa_0$ and $\aaa_1$ are in the same orbit under  the action of $G$ on  $M^{\mrm{eq}}$; now use that  each orbit on  $M^{\mrm{eq}}$ is definable. 

  \begin{fact} \label{coset and pairs} Each open coset $A$ is of the form $[\aaa_0, \aaa_1]$. Here $\aaa_1$ can be chosen to be any element of  $M^{\mrm{eq}}$ such that $A$ is a left coset of $G_{\aaa_1}$ (which exists by~\ref{evans_fact}(i)).
 %, where $A$ is a right coset of $G_{\aaa_0}$, and a left coset of $G_{\aaa_1}$.
  \end{fact}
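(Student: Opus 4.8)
The plan is to combine Notation~\ref{not:cosets} with Lemma~\ref{evans_fact}(i). First I would use that an open coset is, by definition, a left coset of an open subgroup, and that by Lemma~\ref{evans_fact}(i) every open subgroup of $G$ has the form $G_{\aaa_1}$ for some imaginary $\aaa_1 \in M^{\mrm{eq}}$. So I may fix such an $\aaa_1$ together with $g_0 \in G$ satisfying $A = g_0 G_{\aaa_1}$, and then set $\aaa_0 := g_0(\aaa_1)$, which lies in the same sort as $\aaa_1$ because $g_0$ acts as an automorphism of $M^{\mrm{eq}}$.

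Next I would verify the equality $A = [\aaa_0, \aaa_1]$ directly from the definitions. For $g \in G$ we have $g \in [\aaa_0,\aaa_1]$ iff $g(\aaa_1) = \aaa_0 = g_0(\aaa_1)$, iff $g_0^{-1} g(\aaa_1) = \aaa_1$, iff $g_0^{-1} g \in G_{\aaa_1}$, iff $g \in g_0 G_{\aaa_1} = A$. This chain of equivalences simultaneously shows that $[\aaa_0,\aaa_1]$ is nonempty (it contains $g_0$) and that it equals $A$. The final assertion of the statement is then immediate: the argument used nothing about $\aaa_1$ beyond the presentation $A = g_0 G_{\aaa_1}$, so for \emph{any} imaginary $\aaa_1$ of which $A$ is a left coset the same choice $\aaa_0 = g_0(\aaa_1)$ witnesses $A = [\aaa_0,\aaa_1]$; and at least one such $\aaa_1$ exists by Lemma~\ref{evans_fact}(i).

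I expect essentially no obstacle here, since the content of the statement is packaged entirely in Lemma~\ref{evans_fact}(i) (every open subgroup is a stabiliser $G_{\aaa}$) together with the elementary coset bookkeeping already recorded in the paragraph preceding the fact, where $[\aaa_0,\aaa_1]$ is observed to be a left coset of $G_{\aaa_1}$ and a right coset of $G_{\aaa_0}$. The only point to keep straight is the left/right convention: $\aaa_1$ plays the role of the domain of the coset and $\aaa_0$ the codomain, which is exactly what makes $A$ appear as a left coset of $G_{\aaa_1}$ as required.
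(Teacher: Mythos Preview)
Your proof is correct and follows exactly the paper's approach: choose any $h\in A$, write $A=hG_{\alpha_1}$, and set $\alpha_0=h(\alpha_1)$. One small slip in your final commentary: in the paper's conventions the \emph{domain} of $A$ is $A\cdot A^{-1}=G_{\alpha_0}$ and the \emph{codomain} is $A^{-1}\cdot A=G_{\alpha_1}$, not the other way around, though this does not affect the argument itself.
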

 
 \begin{proof}  Note that   $A = h G_{\aaa_1}$ for any  $h \in A$; now  let $\aaa_0 = h(\aaa_1)$. \end{proof}

Inclusion of open cosets, and the product of cosets in case it is  defined in $\+ W(G)$,  can also be expressed in terms of definability; see \cite[4.8]{LogicBlog:23}.

 \begin{example} {\rm 
 Let $k \geq 2$. Let $M$ be the countably infinite structure with one permutation     $\pi$, where  each cycle   has length $k$. Then $G:= \Aut(M)\cong  C_k \wr \Sym(\omega)$ where $C_k$ is a cyclic  group of size $k$, and  the wreath product is taken via the action of $\Sym(\omega)$ on~$\omega$. Note that $G$  acts 1-transitively on $M$.   Let $V= G_b$ where $b\in M$. Each subgroup $U$ of the form $ G_a$  has $k$ right cosets $A_i$ $(i< k)$ that are left cosets of $V$: these are the   $A_i = \{ f\in G \colon  \pi^{i}(a)=  f(b)\}$ where $0\leq i < k$. Letting $a= b$ we see that  the group  $N_G(V)/V$ is cyclic of size $k$.  
 
 In general, if $G= \Aut(M)$ is 1-transitive and   $V = G_b$ for some $b \in M$, then $N_G(V)/V$     is isomorphic to the centraliser of $G$ in $\Sym(\omega)$ via the map $\pi \to [\pi(b), b]$. The centraliser is non-abelian in a variant of the above example: replace the cyclic group above by a subgroup of $S_n$ with non-abelian centraliser. }
 \end{example}
% blocks of size 6, in each permutation f is (01)  and g is (024)(135). Then get C_2\wr C_3.

  \subsection{$\Out(G)$ is t.d.l.c.\ for oligomorphic groups $G$}  \label{ss: Polish} 
  Recall that $\Inn(G)$ is the group of inner automorphisms of $G$, which by \ref{prop: Inn closed}  is a closed subgroup of $\Aut(G)$ whenever $G$ is Roelcke precompact.
 We now obtain  Theorem~\ref{th:summary part 1}(c).
  \begin{theorem}  \label{prop: tdlc} Let $G$ be an oligomorphic group,  and   $\+ N_G$   its normaliser in $\Sym(\omega)$. \begin{enumerate}[(i)]
\item   Let $S \colon \+ N_G \to \Aut(G)$ be the homomorphism defined by  $S(\pi)(g) = \pi \circ g \circ \pi^{-1}$.  \\
    The subgroup   $\mrm{range} (S)$ is open in $\Aut(G)$. 
     \item  $\Out(G)$ is  totally disconnected and   locally compact, having the profinite group  $\+ P=\mrm{range} (S)/\Inn(G)$ as an open subgroup.  If $M_G$ is interdefinable with a structure in a   finite signature, then $\+ P$ is finite, and hence $\Out(G)$ is discrete.
\end{enumerate}
  \end{theorem}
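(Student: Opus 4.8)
The plan is to prove (i) first, since (ii) then follows by soft arguments from the facts already in hand: that $\Inn(G)$ is closed (\ref{prop: Inn closed}) and that $N_G/G=\Aut(\+ E_M)/\Aut(M)$ is profinite (\ref{fa:NG}, \ref{profinite_lemma}). Throughout I work inside $\Aut(G)\cong\Aut(\+ W(G))$ with its pointwise-convergence topology, and I call a subgroup $G_a$ with $a$ in the home sort $M$ a \emph{point stabiliser}.

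For (i) I single out the set
\[
P=\{\Phi\in\Aut(G)\colon \Phi \text{ and }\Phi^{-1}\text{ map every point stabiliser to a point stabiliser}\}.
\]
First I would check that $P$ is a subgroup (a composite of maps carrying point stabilisers to point stabilisers does the same, and the definition is symmetric under inversion), and then that it is \emph{open}. Openness is where oligomorphy enters: if $\Phi(G_{a_0})$ is a point stabiliser, then for every $a=g(a_0)$ in the orbit of $a_0$ one has $\Phi(G_a)=\Phi(g)\,\Phi(G_{a_0})\,\Phi(g)^{-1}$, which is again a point stabiliser because $\Phi(g)\in G$ acts on $M$. Since there are only finitely many $1$-orbits, $P$ is cut out by finitely many conditions of the form ``$\Phi(G_{a_j})$ is a point stabiliser'' and ``$\Phi^{-1}(G_{a_j})$ is a point stabiliser'' for orbit representatives $a_1,\dots,a_r$, each of which is open; hence $P$ is an open subgroup. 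As $\mrm{range}(R)\subseteq P$ is clear, it then suffices to prove $P\subseteq\mrm{range}(R)$: a subgroup containing the open subgroup $P$ is open.

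The core of (i) is to realise each $\Phi\in P$ as conjugation by a permutation. Writing $[a,a']=\{g\colon g(a')=a\}$ for the home-sort cosets, $\Phi$ permutes these (it preserves point stabilisers both ways), so I would reconstruct $\pi\in\Sym(M)$ by a K\"onig's-lemma argument in the spirit of \ref{prop: Inn closed}: let the level-$n$ nodes of a tree be the tuples $(b_0,\dots,b_{n-1})$ of distinct points with $\Phi([i,j])=[b_i,b_j]$ for all $i,j<n$. This tree is finitely branching, because by \ref{fact:conjugates} the set of points $x$ with $G_x=G_b$ has size $|N_G(G_b):G_b|<\infty$; and each level is nonempty, since writing $\Phi([i,0])=[c_i,c_0]$ the multiplicativity $\Phi([i,j])=\Phi([i,0])\cdot\Phi([0,j])$ forces $\Phi([i,j])=[c_i,c_j]$ with the $c_i$ distinct. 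An infinite path yields $\pi\in\Sym(M)$ (running the same construction for $\Phi^{-1}$ gives surjectivity) with $\Phi([i,j])=[\pi i,\pi j]$ for all $i,j$. Finally, agreement on all home-sort cosets forces $R(\pi)=\Phi$: for $g\in G$ both $\Phi(g)$ and $\pi g\pi^{-1}$ lie in $\bigcap_j[\pi g(j),\pi j]$, which is the singleton $\{\pi g\pi^{-1}\}$ since $\pi$ is onto. Then $\pi G\pi^{-1}=\Phi(G)=G$ gives $\pi\in N_G$, so $\Phi=R(\pi)\in\mrm{range}(R)$.

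For (ii): by (i) the subgroup $K:=\mrm{range}(R)/\Inn(G)$ is open in $\Out(G)$, and $\Out(G)$ is Hausdorff because $\Inn(G)$ is closed. The conjugation map descends to a continuous homomorphism $\bar R\colon N_G/G\to\Out(G)$ with image $K$; continuity of $R$ is clear since $\pi\mapsto\pi A\pi^{-1}=[\pi\aaa_0,\pi\aaa_1]$ depends on only finitely much of $\pi$ for each open coset $A=[\aaa_0,\aaa_1]$. As $N_G/G$ is profinite, $\ker\bar R$ is closed, so $(N_G/G)/\ker\bar R$ is profinite; the induced continuous bijection onto the (compact) image $K$ inside the Hausdorff group $\Out(G)$ is a homeomorphism, so $K$ is profinite. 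Thus $\Out(G)$ has a compact open subgroup and is locally compact; since that subgroup is clopen and totally disconnected, the identity component of $\Out(G)$ is trivial, so $\Out(G)$ is totally disconnected. The main obstacle is the K\"onig's-lemma reconstruction in (i) — securing finite branching through the finitely many admissible choices of each $\pi$-value, and propagating agreement on the home-sort sub-groupoid to $R(\pi)=\Phi$ on all of $G$; part (ii) is then formal, resting on \ref{prop: Inn closed} and \ref{profinite_lemma}.
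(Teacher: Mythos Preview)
Your overall plan for (i) --- exhibit an open subgroup of $\Aut(G)$ and show it lies inside $\mrm{range}(R)$ --- matches the paper's, but there is a genuine gap at the surjectivity step, and the paper's route sidesteps it entirely.

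The gap: the parenthetical ``running the same construction for $\Phi^{-1}$ gives surjectivity'' does not work as written. K\"onig's lemma hands you \emph{some} branch $\pi$ of the tree for $\Phi$ and, independently, \emph{some} branch $\pi'$ of the tree for $\Phi^{-1}$; nothing ties $\pi'$ to $\pi^{-1}$. This is repairable --- from $\Phi(g)\circ\pi=\pi\circ g$ and its analogue for $\pi'$ one sees that $\pi'\circ\pi$ commutes with $G$, hence carries each $1$-orbit onto a $1$-orbit, and with finitely many orbits this forces $\pi'\circ\pi$ (and therefore $\pi$) to be a bijection --- but that argument is missing. Two smaller issues: your appeal to \ref{fact:conjugates} is misplaced, since that fact counts cosets with prescribed domain and codomain, not points $x$ with $G_x=G_b$; the right reason the latter set is finite is that it sits inside $\mrm{dcl}(b)$ (and its size need not equal $|N_G(G_b):G_b|$ when $x$ may lie in a different orbit from $b$). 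Also, writing $\Phi([i,0])=[c_i,c_0]$ for the nonemptiness of levels tacitly assumes transitivity; with several $1$-orbits you must anchor each orbit at its own representative.

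The paper avoids all of this by working with the \emph{smaller} open subgroup $\{\Phi:\Phi(G_{a_i})=G_{a_i}\text{ for each }i\}$, where $a_1,\dots,a_r$ represent the $1$-orbits. For such $\Phi$ and $b$ in the orbit of $a_i$, one puts $\pi(b)=d$ where $\Phi([b,a_i])=[d,a_i]$; here $d$ is \emph{unique} (any $g$ in the nonempty coset $[d,a_i]$ satisfies $g(a_i)=d$), so no tree and no K\"onig's lemma are needed, and $\pi_{\Phi^{-1}}$ is visibly the two-sided inverse of $\pi_\Phi$. The verification that $R(\pi)=\Phi$ then proceeds exactly as in your last display. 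Your treatment of (ii) is correct and essentially the paper's; the paper phrases it via $\ker R=C_G$, the finite centraliser of $G$ in $\Sym(\omega)$, so that $\mrm{range}(R)/\Inn(G)\cong N_G/GC_G$ is a (compact) quotient of the profinite group $N_G/G$.
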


 \begin{proof}
 {\bf  (i)}.  We will use Notation~\ref{not:cosets}, where $M$ is the canonical structure for $G$ as in~\ref{df: MG} (with domain $\omega$). Note  that for $a,c,d \in \omega$,  if $[d,a]\supseteq [c,a] \neq \emptyset$ then $d=c$.
% :  By \ref{fac3}, there is a definable function $\eta $ on $\omega$ such that $b= \eta(c)$ and $a = \eta(a)$. Such a function commutes which each $g\in G$. Since $c = g( a) $ for some $g \in G$, we have $b = \eta ( g ( a)) = g (\eta (a)) = c$. 

\smallskip 
\noindent Let  $a_1, \ldots, a_n$ represent the $1$-orbits of $G$. It suffices to show that the open subgroup $\{ \Phi \in \Aut(G) \colon \, \Phi(G_{a_i})= G_{a_i} \text{ for each } i\}$ is contained in $\mrm{range} (S)$.  
 Suppose that   $\Phi $ is in this subgroup.  Since $\Phi$ fixes each subgroup $G_{a_i}$, for each coset $D=[b,a_i]$, $\Phi(D)$ is also a left coset of $G_{a_i}$. Hence, by~\ref{coset and pairs}, $\Phi(D)$  can be written in the form $[d,a_i]$, and $d$ is unique as noted above. Define a function $\pi_\Phi  \colon \omega \to \omega$  by 
 \[ \pi_\Phi(b)= d \text { if } \Phi([b,a_i]) = [d,a_i], \text{ where } [b,a_i ] \neq \emptyset\]
 (noting  that $i$ is uniquely determined by $b$). %
 Clearly  $\pi_\Phi$ and $\pi_{\Phi^{-1}}$ are inverses. So $\pi_\Phi\in \Sym (\omega)$.    The following establishes (i).
 
 \smallskip
   \n {\bf Claim.} \emph{  $\pi_\Phi \in \+ N_G$ and $R(\pi_\Phi) = \Phi$. } \newline 
    \smallskip
 Write $\pi=\pi_\Phi$. To verify the claim, we first show that   $\Phi([r,s]) = [\pi(r),\pi(s)]$  for each $r,s \in \omega$ such that $[r,s]\neq \emptyset$. 
Note that by hypothesis on $\Phi$ and since $G_{a_i} = [a_i,a_i]$, we have  $\pi(a_i)= a_i$ for each $i$. Also note that $\Phi([b,c])^{-1}= \Phi([c,b])$  for each $b,c \in \omega$. 
 Let now $i$ be such that $r$ and $s$ are in the $1$-orbit of $a_i$. We have $[r,s] = [r,a_i]\cdot [a_i,s]= [r,a_i]\cdot [s,a_i]^{-1}$. So 
 \[\Phi([r,s])= \Phi([r,a_i])\cdot \Phi([s,a_i])^{-1}= [\pi(r), a_i]\cdot [\pi(s), a_i]^{-1} = [\pi(r), \pi(s)]. \]

\smallskip \noindent Next,  for each $h \in G$ we have $\{ h\}= \bigcap_{s\in \omega}  hG_s$, and $hG_s = [h(s), s]$ using Notation~\ref{not:cosets}.   Then 
 
 \begin{center}  $ \{\Phi(g)\} = \bigcap_{s\in \omega} \Phi( [g(s), s])= \bigcap_{s\in \omega} [\pi(g(s)), \pi (s)] = \bigcap_{t\in \omega}  [\pi(g(\pi^{-1}(t))), t]= \{ g^\pi\}$,\end{center}
 as required.  This shows the claim. 
  
\smallskip 
\noindent  {\bf  (ii)}.   The kernel of the homomorphism $S$ is the centraliser $C= C_{\Sym(\omega)}(G)$ of $G$ in $\Sym(\omega)$, which consists of the  permutations that are definable in $M$ when seen as binary relations on $M$. Note that  $C$ is finite because $M$ is    $\omega$-categorical. By general group theory $C$ is a  normal subgroup of $\+ N_G$, hence  $GC$ is also  a normal subgroup of $\+ N_G$.    So $G$ has finite index in $GC$, whence  $GC$ is closed in $\Sym(\omega)$. Since $C$ is the kernel of $R$ and $S(G)= \Inn(G)$, we have $S^{-1}(\Inn(G)) = G C$. Thus   $\mrm{range}(S)/\Inn(G)$ is  topologically  isomorphic to $\+ N_G/GC$.     By Fact~\ref{fa:NG} $\+ N_G= \Aut(\+ E_M)$, so by the first statement of  Theorem~\ref{profinite_lemma} the group $\+ N_G/G$   is compact. So $\+ N_G/GC$ is also compact as its topological quotient. If $M_G$ is interdefinable with a structure in a   finite signature, then $\+ P$ is finite by the second statement of Theorem~\ref{profinite_lemma}.
 \end{proof}

  \begin{remark}  Using the  model theoretic setting of bi-interpretations,   Schlicht and the first author   \cite[Section~3]{Nies.Schlicht:nd}  obtain  results related to \ref{prop: tdlc}.   The group $\Out(G)$ corresponds to the group of self bi-interpre\-tations of $\mathrm{Th}(M)$ up to an appropriate congruence relation, and   $\mrm{range} (R)/\Inn(G)$ corresponds to the group of one-dimensional such bi-interpretations up to this congruence. If $M_G$ is interdefinable with a structure in a finite signature, then this group is clearly finite.    This gives a perhaps more instructive  proof that  $\Out(G)$ is discrete in this case. 
	%	We  hope  that this   work   of the first author with Schlicht will allow us to describe the structure of the groups $N_G/G$ and   $\Out(G)$ in terms of $\mrm{Th}(M_G)$. 
\end{remark}

 \section{A general result involving a Borel property  of open subgroups} \label{ss:PropP}
Recall that by convention,   isomorphisms between topological groups are assumed to be topological.  The following set-up resembles the one in  \cite[Th.\  3.1]{nies}; we refer to that source for background. In particular, the set of closed subgroups of $\Sym(\omega)$ forms a standard Borel space denoted $\+ U(S_\infty)$, and its subset of oligomorphic groups is Borel. 

\begin{definition}\label{def_sub-basic} Let $\+ V$ be a Borel class of oligomorphic groups. Let   $P$  be a  Borel relation picking   open subgroups from groups  $G \in \+ V$. If $P(U,G)$ holds, we say that $U$ is a \emph{$P$-subgroup of} $G$. \begin{enumerate}[(a)] \item   $P$   is called \emph{invariant} if any   isomorphism between groups  $\  G ,  H \in \+ V$ preserves~$P$. 
		\item An invariant Borel  relation  $P $ is  called  \emph{sub-basic  for}  $\+ V$  if   each open subgroup of a group $G \in \+ V$ contains a finite intersection of  $P$-subgroups of $G$.\end{enumerate}    \end{definition}
%This is a Borel property because the operator  $G \to \+ W(G)$ is Borel.  ADD DETAIL
%\noindent Condition  (a) tends to hold for natural properties of open subgroups.  Condition~(b) rules out the property ``being  of the form $G_a$ for some $a \in \omega$" (which satisfies the other two conditions), for  this depends on $G$ as a group of permutations.  Condition (c) says that the $P$-subgroups form a sub-basis of neighbourhoods of the identity.
  
 \begin{assumption} \label{ass:1}  In the following we fix a Borel class $\+ V$ of oligomorphic groups, and assume that $P$  is an  invariant Borel relation  that  is sub-basic for $\+ V$. \end{assumption}
 
For each $G \in \+ V$ we   define a countable structure  $\+ C^P_G$  that is a ``trimmed-down" version  of the meet groupoid $\+ W(G)$ of Definition~\ref{def:meetgr}. 
% It will play the role of $\+ E_M$ in the proof of Theorem~\ref{first_th}.
The domain of  $\+ C^P_G$ consists of  the  cosets of  $P$-subgroups, and in place  of the meet operation it has  merely   non-disjointness relations.   A structure  of form $\+ C^P_G$ is  similar to the structures in \cite[Definition\ 2.15]{reconstruction2} used in the particular  case of groups of the form $\Aut(M)$ where $M$ has weak elimination of imaginaries. However,  to get the duality  between groups in $\+ V$ and their corresponding structures expressed in  Lemma~\ref{the_first_crucial_lemma} below, the partial product operation is needed; this operation was absent from  \cite[Definition\ 2.15]{reconstruction2}.

	\begin{definition}\label{def_expanded_12_extra}     Let $G \in \+ V$.
We form a structure   $\mathcal{C}^P_G$ on the set      \begin{center} $\{ gU: U \text{ is a $P$-subgroup of $G$} \text{ and } g \in G \}$  \end{center}
with the  following.
%	Suppose an invariant  property $\+ P$ of open subgroups is sub-basic for $G$. We define a countable  structure $\mathcal{C}_G$.
	\begin{enumerate}[(a)]
	 	%$\mathcal{CEO}_G := \{ gH: H \in \mathcal{EO} \text{ and } g \in G \}$;
	\item\label{def_expanded_12_unary}\label{dom} \label{cod}    Unary functions $\mrm{Dom}$ and  $\mrm{Cod}$ (which stand for ``domain'' and ``codomain") such that   $\mrm{Dom}(B)  =U$ iff $B$ is a right coset of the subgroup $U$, and $\mrm{Cod}(B)  =V$ iff $B$ is a left coset of the subgroup $V$. 
		\item 
	For each $n$, the  $n$-ary relation   $I_n$    holds of $(B_1, \ldots, B_n)$ if $B_1 \cap \ldots \cap B_n \neq \emptyset$;
	 
	 \item A ternary relation     holds of $(A,B,C)$ if $\mrm{Cod}(A)= \mrm{Dom}(B)$ and $AB= C$. As before in the case of $\+ W(G)$, we  write $A\cdot B=C$ for this. 
\end{enumerate}
\end{definition}
We will omit the superscript $P$ in $\+ C^P_G$  if   $P$ is understood from the context. The next lemma establishes the duality between   groups  $G$ in $\+ V$  and the corresponding structures $\+ C_G$.

	\begin{lemma}\label{the_first_crucial_lemma} Let $G,H \in \+ V$.  If $\aaa$ is   an  isomorphism $G \to  H$, let $R(\alpha) \colon \mathcal{C}_G \to  \mathcal{C}_H$ be the map given by $B \mapsto \alpha(B)$. 
	 	\begin{enumerate}[(1)]
		\item $R(\alpha)$ is an isomorphism of the structures $\mathcal{C}_G $ and $ \mathcal{C}_H$.
	\item Each isomorphism $\theta: \mathcal{C}_G\to \mathcal{C}_H$ is of the form $R(\alpha)$ for a unique $\alpha$.
	%\item Let $\mrm{max}\{k_M, k_N\} \leq k$. Then $\+ C_M(k) \cong \+ C_N(k)$ implies $\mrm{Aut}(M) \cong_{\mrm{top}} \mrm{Aut}(N)$.
	\item\label{crucial_item2} The map   $R$ restricted to   $\mrm{Aut}(G)$ is an isomorphism  
  $\mrm{Aut}(G) \to   \mrm{Aut}(\mathcal{C}_G)$.
  %\gianluca{The notation $\cong_{\mrm{top}}$ is obsolete, no?} YES
	\end{enumerate}
\end{lemma}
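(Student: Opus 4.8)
The plan is to establish the three parts in sequence, with most of the work going into showing that $R(\alpha)$ and any abstract isomorphism $\theta$ interact correctly with the partial product and the incidence relations $I_n$. The overall strategy mirrors the duality results for $\+ M(G)$ and $\+ W(G)$ cited from \cite{coarse}, but here the domain is restricted to cosets of $P$-subgroups, and the meet has been replaced by the weaker relations $I_n$; the key insight is that sub-basicity (Assumption~\ref{ass:1}) is exactly what lets us recover enough of the group from this trimmed-down structure.

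\textbf{Part (1).} First I would check that $R(\alpha)$ is a well-defined bijection of the domains: since $\alpha \colon G \to H$ is a topological isomorphism, it maps open subgroups to open subgroups and sends $P$-subgroups of $G$ to $P$-subgroups of $H$ (this is precisely invariance of $P$, Definition~\ref{def_sub-basic}(a)). Hence $\alpha$ maps a coset $gU$ of a $P$-subgroup to the coset $\alpha(g)\alpha(U)$, again a coset of a $P$-subgroup, and the inverse map is induced by $\alpha^{-1}$. Preservation of the structure is then routine but must be stated: $\mrm{Dom}$ and $\mrm{Cod}$ are preserved because $\alpha(A\cdot A^{-1}) = \alpha(A)\cdot\alpha(A)^{-1}$; the product is preserved because $\alpha$ is a group homomorphism, so $A\cdot B = C$ forces $\alpha(A)\cdot\alpha(B)=\alpha(C)$ and conversely; and each $I_n$ is preserved because $\alpha$ is a bijection of $G$, so $B_1\cap\cdots\cap B_n\neq\emptyset$ iff $\alpha(B_1)\cap\cdots\cap\alpha(B_n)\neq\emptyset$. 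This disposes of (1).

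\textbf{Part (2).} This is where the real content lies. Given an abstract isomorphism $\theta\colon \+ C_G \to \+ C_H$, I must recover a group isomorphism $\alpha$. The idea is that each element $g\in G$ is pinned down by the filter of cosets of $P$-subgroups containing it: by sub-basicity, every open subgroup contains a finite intersection of $P$-subgroups, so the cosets $gU$ with $U$ a $P$-subgroup form a neighbourhood basis of $g$. Concretely, I would show that $\{g\}=\bigcap\{gU : U \text{ a $P$-subgroup}\}$, identify $g$ with the corresponding consistent family of cosets (consistency being exactly the content of the $I_n$ relations, which guarantee that any finite subfamily has nonempty intersection), and then define $\alpha(g)$ as the unique element of the intersection of the $\theta$-images. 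The compactness/completeness that makes this intersection a singleton comes from $\theta$ preserving the $I_n$ relations together with $\mrm{Dom}/\mrm{Cod}$, so the image family is again a consistent family of cosets of $P$-subgroups in $H$ cutting down to a point. That $\alpha$ is a homomorphism follows from $\theta$ preserving the partial product $\cdot$, and that $\alpha$ is continuous (hence a topological isomorphism, invoking the automatic-continuity fact from \cite[2.3.3]{gao} as used elsewhere in the paper) follows because $\alpha$ maps a neighbourhood basis to a neighbourhood basis. Uniqueness of $\alpha$ is immediate since $\alpha$ is determined on each $g$ by its values on the cosets containing $g$, which $R(\alpha)=\theta$ forces.

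\textbf{Part (3).} Taking $G=H$, parts (1) and (2) together say that $R\colon \Aut(G)\to\Aut(\+ C_G)$ is a bijection; it is clearly a group homomorphism since $R(\alpha\circ\beta)$ and $R(\alpha)\circ R(\beta)$ agree on every coset. It remains to note that $R$ is a homeomorphism: both topologies are the topology of pointwise convergence on cosets of ($P$-)subgroups (compare Definition~\ref{df:nbhd} and the realisation of $\Aut(\+ C_G)$ as a closed subgroup of $\Sym(\+ C_G)$), and $R$ matches the basic open sets on the two sides, so it is continuous with continuous inverse. \textbf{The main obstacle} I anticipate is Part (2): verifying that the filter of $P$-cosets through a point really is a neighbourhood basis (so that the reconstructed $\alpha(g)$ is a single well-defined element and $\alpha$ is continuous) is the one step that genuinely uses the sub-basic hypothesis rather than formal nonsense, and care is needed to see that the relations $I_n$ — rather than the full meet operation available in $\+ W(G)$ — carry enough information to guarantee that $\theta$-images of consistent families remain consistent and cut down to points.
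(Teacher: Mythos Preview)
Your proposal is correct and follows essentially the same approach as the paper: reconstruct $\alpha(g)$ from the family $\{\theta(gU_i)\}$ using sub-basicity, verify the homomorphism property via preservation of the partial product, and deduce continuity. The only notable difference is cosmetic: where you describe $\alpha(g)$ as the unique element of $\bigcap_i \theta(gU_i)$ (appealing to completeness for nonemptiness), the paper constructs $\alpha(g)$ concretely as a function $\omega\to\omega$ via pointwise limits $\alpha(g)(b)=\lim_n r_n(b)$ with $r_n\in\bigcap_{i<n}\theta(gU_i)$, and then uses closedness of $H$ in $\Sym(\omega)$ to see this limit lies in $H$; the paper also makes explicit the use of conjugate $P$-subgroups $U_i'=gU_ig^{-1}$ (legitimate by invariance of $P$) to get compatible domains/codomains when verifying $\alpha(hg)=\alpha(h)\alpha(g)$, a point your sketch compresses into ``$\theta$ preserves $\cdot$.''
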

	\begin{proof}  We have  $G = \mrm{Aut}(M)$ and $H = \mrm{Aut}(N)$ where $M, N$ are the canonical structures for $G$ and $H$, respectively.

	\n (1) Given that  $\alpha$ is a topological isomorphism, it only remains to check  that the relations  $\mrm{Dom}$ and $\mrm{Cod}$ are preserved in both directions by $R(\alpha)$. For all elements~$B,U,V $ in a structure of the form $\+ C_L $,  we have $\mrm{Dom}(B,U) $ iff $B \cdot B^{-1} = U$, and $\mrm{Cod}(B,V) $ iff $B^{-1} \cdot B = V$. This suffices because by its definition, $R(\aaa)$ pre\-serves inversion. 
	
	\smallskip

	\noindent (2) Let   $ (U_i)_{ i < \omega}$  list the subgroups in $\mathcal C_G$, and let   $V_i:=\theta(U_i)$. Then $ (V_i)_{ i < \omega}$  lists the subgroups in $\mathcal C_H$. We define an isomorphism $\aaa \colon G \to H$ such that $R(\alpha)= \theta$. Given $g \in G$, first we  define the value $\alpha(g)$,  which will be  a function $\omega\to \omega$. Let $A_i = gU_i$ and  $B_i= \theta(A_i)$. For each $n$ we have $\+ C_G \models I_n(A_1, \ldots, A_n)$. So, since $\theta$ is an isomorphism,  there is $r_n \in \bigcap_{i< n} B_i$.
	 Since $\mrm{Cod}(B_i, V_i)$ holds in $\mathcal C_H$, we have $r_n V_i = B_i$ for each $i< n$. 
	Since $P$ is sub-basic for $H$, for each  $b \in N$   there is  a finite set $I \subset \omega$ such that $\bigcap_{i \in I} V_i \subseteq H_{b}$. So \[\alpha(g)(b) \colon = (\lim_n r_n)(b)\] exists for each $b$.  
	
\smallskip \noindent 	We  verify that $\alpha$ is a homomorphism $G \to H$. For the rest of  this proof, to avoid an awkward  buildup of parentheses due to   iterated function applications, we use the following notation: instead of $f(x)$ we write $f.x$ (read as ``$f$ of $x$''). So  $f(x)(z)$ becomes $f.x.z$.

\smallskip \noindent 	First we verify that $\alpha.e_G= e_H$. For $g = e_G$, in the definition of $\alpha.g$  we have $A_i = U_i$ and so $B_i = V_i$;  thus $r_n \in \bigcap_{i< n} V_i$. For any  $x \in \omega$, we have $\alpha.e_G.x = \lim_n r_n.x$. Since  $r_n \in \bigcap_{i< n} V_i$, this  implies $\alpha.e_G.x=x$.

\smallskip \noindent 	Next we verify that for each $g,h \in G$, we  have  $(\alpha.h) \circ (\alpha.g) = \alpha.(hg) $.
	Let   $U'_i = gU_ig^{-1}$ and $V'_i = \theta(U'_i)$. Also let $A_i= \theta(gU_i)$ and $B_i = \theta(hU_i')$. Given $x\in \omega$ write $y= \alpha.g.x$. 
%	Let $n$ be so big that $\bigcap_{i<n} V_i \subseteq H_{(x)}$ and $\bigcap_{i<n} V'_i \subseteq H_{y}$. Fix  $r \in \bigcap_{i< n} A_i$  and  $s \in \bigcap_{i<n} B_i$. We have $r.x=y$, and     $s.y= \alpha.h.y$. 
Let $n$ be so large that $\bigcap_{i<n} V_i \subseteq H_{x}\cap H_{y}$. Fix  $r \in \bigcap_{i< n} A_i$. Then   $r.x=y$. Since $P$ is invariant as defined in \ref{def_sub-basic},  the class of  $P$-subgroups of $G$ is   closed under conjugation. So  let $m \geq n$ be so large that each $U_i$, $i< n$, is of the form $U_j'$ for some $j < m$. Let $s \in \bigcap_{i<m} B_i$.  Then    $s\in \bigcap_{i<n} \theta(hU_i)$, and hence $s.y= \alpha.h.y$.

\smallskip \noindent 	Since $hU_i' \cdot gU_i = hg U_i $ holds in $\+C_G$ and $\theta: \mathcal{C}_G\to \mathcal{C}_H$ is an isomorphism, for each $i$,  $B_i \cdot A_i= \theta(hg U_i)$ holds  in $\+ C_H$. Thus  $s\circ r \in \bigcap_{i<n} \theta(hg U_i)$. By the choice of $n$ and since $\mrm {Cod}(\theta(hgU_i)) = V_i$,  this shows that $(\alpha.h) \circ (\alpha.g)(x)=  s\circ r (x)= \alpha. (hg).x$.   

\smallskip \noindent Letting $h= g^{-1}$, we infer that $\aaa(g) \in \Sym(\omega)$. Then 
	 clearly $\alpha(g)  $ is in $ H$ because  $H$ is a closed subset of $\Sym(\omega)$;   recall here that $\alpha(g)   = \lim_n r_n$ where  $r_n \in H$, for all $n < \omega$.
	
\smallskip \noindent	 Exchanging  the roles of $\+ C_G$ and $\+ C_H$ and using $\theta^{-1} \colon \+ C_H \to \+ C_G$,  we find an inverse of~$\alpha$. Thus $\alpha: G \to H$ is a group isomorphism.  
	 To see that $\alpha$ is continuous at $e_G$ (and hence continuous), it suffices to show that $\alpha^{-1}(H_{x})$ contains an open subgroup, for each $x \in N$. As before let $n$ be such that $\bigcap_{i<n} V_i\subseteq  H_{x}$; then $\bigcap_{i< n} U_i $ is the required subgroup. Similarly, $\alpha^{-1}$ is continuous. 
	 
%\gianluca{Notice the sentence it is now easy to verify that $\alpha$ is an homomorphism!!!}
%\gianluca{The idea is that $g$ is uniquely identified by how it acts by conjugaction on the $U_i$'s together with the cosets and we construct $\alpha(g)$ by hand so it does on the $V_i$'s and its cosets exactly what $g$ does upstairs on the $U_i$'s. As all is controlled by conjugation it seems to me obvious that we have an homomorphism. Agree?}
%\andre{sounds plausible, but there is no actual proof there so far... :)  
%Conjugation of the $U_i$ can only determine $g$ up to the centre which could be nontrivial.  On the bright  side,   the definability part Prop 4.7 can easily include the new relation because this just says that the orbit of the n-tuple of imaginaries for the U and the U' are the same in $M^{\mrm{eq}}$.}
%\gianluca{You convinced me that some form of coset composition is needed. But I am not sure about the details. Can you show me what exactly we need to add in 4.5 and how it is used here? You seem to see this more clearly than me and so this seems like a reasonable assumption. I personally would still leave the predicates $\sim_n$ as it makes clearer the fact that $\alpha(g)V_i (\alpha(g))^{-1} = V'_i$ which I think that we want.}

%	By construction we have that $\alpha(g)V_i (\alpha(g))^{-1} = V'_i$, for all $i < \omega$. In fact if $g, h \in G$, then we have the following:
%	$$\alpha(gh^{-1})V_i (\alpha(gh^{-1}))^{-1} = \alpha(g)\alpha(h)^{-1}V_i \alpha(h)\alpha(g^{-1}).$$
%	 It is also easy 

\smallskip

\noindent  (3)  For each $\alpha ,\beta \in \Aut(G)$, we have   $R(\alpha \beta^{-1})= R(\alpha) R(\beta)^{-1}$ by the definition of~$R$, so $ R$ is an isomorphism of groups.  To see that $R$ is continuous (and hence a homeomorphism using   \cite[2.3.4]{gao}), note that a basic neighbourhood of the identity in $\Aut(\+ C_G)$ has the form $\{\theta \colon \bigwedge_{i=1}^n \theta(A_i)= A_i\}$, where $A_1, \ldots, A_n \in \+ C_G$. 
Its preimage under $R$  is open in $\Aut(G)$ according to Definition~\ref{df:nbhd}.  \end{proof}

%%%%%%
\n By Lemma~\ref{the_first_crucial_lemma}(3) and the definition of its topology,  $\Aut(G)$ acts continuously on $\+ C_G$ by automorphisms, namely, $\Phi \cdot A =  \Phi(A)$ where $\Phi \in \Aut(G)$ and $A \in \+ C_G$. Under an additional hypothesis,      the continuous subaction of the closed subgroup $\Inn(G)$ on $\+ C_G$  is    oligomorphic.
\begin{lemma}  \label{lem:frak} Let $G\in \+ V$. Assume that  there are    only finitely many conjugacy classes of $P$-subgroups in $G$. Then the  canonical action by automorphisms  of $\Inn(G)$ on $\+ C_G$    is oligomorphic. \end{lemma}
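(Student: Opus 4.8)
The plan is to translate the conjugation action of $\Inn(G)$ on cosets into the natural action of $G$ on pairs of imaginaries, and then invoke $\aleph_0$-categoricity together with the finiteness hypothesis on conjugacy classes. Recall that oligomorphicity of the action means that for each $n$ the group $\Inn(G)$ has only finitely many orbits on $(\+ C_G)^n$. Since an inner automorphism acts by $A \mapsto gAg^{-1}$ and the centre $Z(G)$ acts trivially, these orbits coincide with the orbits of the conjugation action $g \cdot A = gAg^{-1}$ of $G$ itself; so it suffices to bound the number of $G$-orbits on $(\+ C_G)^n$.

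First I would write every element of $\+ C_G$ in the imaginary coordinates of Subsection~\ref{ss:open coset}. By Fact~\ref{coset and pairs} every coset $A \in \+ C_G$ has the form $A = [\aaa_0, \aaa_1]$ for imaginaries $\aaa_0, \aaa_1$ lying in a common $G$-orbit (hence a common sort) of $M^{\mrm{eq}}$, with $G_{\aaa_0} = \mrm{Dom}(A)$ and $G_{\aaa_1} = \mrm{Cod}(A)$. A direct computation gives the key equivariance
\[ g\,[\aaa_0, \aaa_1]\,g^{-1} = [g(\aaa_0), g(\aaa_1)], \]
so under this representation the conjugation action of $G$ on cosets becomes the diagonal action of $G$ on pairs of imaginaries.

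Next I would use the hypothesis to confine these imaginaries to finitely many sorts. Choosing representatives $U^{(1)} = G_{\gamma_1}, \ldots, U^{(m)} = G_{\gamma_m}$ of the finitely many conjugacy classes of $P$-subgroups (each $U^{(l)}$ is open, hence of this form by Lemma~\ref{fac1}(i)), and letting $S_l$ be the sort of $\gamma_l$, every $P$-subgroup is a conjugate $gU^{(l)}g^{-1} = G_{g(\gamma_l)}$ and therefore equals $G_\beta$ for some $\beta \in \bigcup_{l \leq m} S_l$. Consequently every $A \in \+ C_G$ can be written as $[\aaa_0, \aaa_1]$ with $\aaa_0, \aaa_1 \in \bigcup_{l \leq m} S_l$. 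Let $X$ denote the $G$-invariant set of such admissible pairs; the map $X \to \+ C_G$, $(\aaa_0, \aaa_1) \mapsto [\aaa_0, \aaa_1]$, is a surjection that is $G$-equivariant by the displayed identity.

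Finally I would conclude by orbit-counting. For each $n$ the induced map $X^n \to (\+ C_G)^n$ is a $G$-equivariant surjection, so it is enough to bound the number of $G$-orbits on $X^n$. But $X^n$ is a $G$-invariant subset of $(\bigcup_{l \leq m} S_l)^{2n}$, a finite product of sorts of $M^{\mrm{eq}}$; since $M$ is $\aleph_0$-categorical, $G$ has only finitely many orbits on any finite product of sorts, because the orbit of a tuple of imaginaries is determined by the orbit of a tuple of representatives in some $M^k$, of which there are finitely many. Hence $(\+ C_G)^n$ has finitely many $G$-orbits, as required. The main thing to get right is the equivariance identity together with the observation that finitely many conjugacy classes of $P$-subgroups pins the relevant imaginaries down to finitely many sorts; once these are in place, the finiteness is a routine consequence of $\aleph_0$-categoricity.
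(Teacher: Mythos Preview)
Your argument is correct, and it takes a genuinely different route from the paper's. The paper first observes that $\Inn(G)\cong G/Z(G)$ is Roelcke precompact, and then invokes Tsankov's characterisation (Fact~\ref{fa:RP Olig}) to reduce the problem to showing that the action has only finitely many \emph{$1$-orbits}. For this it notes that a $1$-orbit is determined by the $\Inn(G)$-orbit of the pair $(\mrm{Dom}(A),\mrm{Cod}(A))$ --- of which there are finitely many, by the same ``finitely many sorts of imaginaries'' observation you make --- together with the choice of one of the $|N_G(U):U|$ cosets having that domain and codomain (Fact~\ref{fact:conjugates}), a number which is finite by Roelcke precompactness.

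By contrast, you prove finiteness of $n$-orbits directly for every $n$, via the $G$-equivariant surjection $(\alpha_0,\alpha_1)\mapsto [\alpha_0,\alpha_1]$ from pairs of imaginaries in a fixed finite union of sorts onto $\+ C_G$, and the fact that $G$ has finitely many orbits on any finite product of sorts of $M^{\mrm{eq}}$. This is more elementary in that it bypasses both Tsankov's theorem and the explicit appeal to Roelcke precompactness; the cost is that you must set up the equivariance identity $g[\alpha_0,\alpha_1]g^{-1}=[g(\alpha_0),g(\alpha_1)]$ and the parametrisation carefully. One small point: to ensure the map $X\to \+ C_G$ is well defined you should take $X$ to consist only of pairs $(\alpha_0,\alpha_1)$ with $G_{\alpha_1}$ a $P$-subgroup (equivalently, $\alpha_1$ in the $G$-orbit of some $\gamma_l$), not merely $\alpha_1\in S_l$; since $P$ is invariant under inner automorphisms this set is still $G$-invariant, and the rest of your argument goes through unchanged.
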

 
\begin{proof}  
By \ref{rem:top Inn}, $\Inn(G) $ is topologically isomorphic to $ G/Z(G)$, and hence       Roelcke precompact. Thus, by   \cite[Theorem 2.4]{Tsankov:12} it suffices to show that the action of  $\Inn(G)$ on $\+ C_G$   has only  finitely many $1$-orbits. 

%Consider an  element $A$ of $\+ C_G$, we want to describe its orbit up to finitely many by 

\smallskip \noindent Recall that for a pair $U,V$ of conjugate open subgroups, by~\ref{fact:conjugates} there are $[N_U:U]$  right cosets  of $U$ that are left cosets of $V$,  and $[N_U:U]$       is finite as $G$ is Roelcke precompact. 
So it suffices to show that there are only finitely many $\Inn(G)$-orbits of   pairs $(U,V)$ where $U,V$ are conjugate $P$-subgroups; the $\Inn(G)$-orbit of $A \in \+ C_G$ is then given by the orbit of $U,V$ where $U$ is the domain  and $V$ the codomain of  $A$, together with the information which right coset of $U$ the element $A$ belongs to.

\smallskip \noindent    By hypothesis $U,V$  are of the form $G_\aaa$ and $G_\beta$, where  $\aaa, \beta$ are both elements of one of   finitely many 1-transitive sorts  of $M^\mrm{eq}$. The structure $M^\mrm{eq}$ has only finitely many  orbits of such pairs $\aaa, \beta$.
 % by \ref{count cosets}  the orbit of such a  coset is now given by  $k$ many possibilities, where $k$ only depends on $S$. 
 \end{proof}
%Every $g \in G$ induces an automorphism $\widehat{g} \in \mrm{Aut}(\mathcal{C}_G)$ %(cf. \ref{C+_G_def})
%in such a way that the map $g \mapsto \widehat{g}$ is an homomorphism.  The map $\widehat{g}$ is defined by letting, for $H$ an essential subgroup of $G$ and $k \in G$, $\widehat{g}(kH) = gkg^{-1} gHg^{-1}$. The kernel of this map coincides with the centre $Z(G)$,   so we have an action of $\Inn(G) \cong G/Z(G)$  on $\mathcal{C}_G$. This action  is faithful and  strongly continuous, in the sense of \cite[Sec.~3.3]{coarse}. 

 Now let $\mathcal{C}^+_G$ be  the expansion of $\mathcal{C}_G$ obtained by naming the $n$-orbits of the action of $\Inn(G)$ on $\mathcal{C}_G$ by predicates, for all $n \geq 1$.  Note that $\+ C_G$ is a reduct of $\+ C^+_G$.
  \begin{fact} \label{useful_prop} Suppose that $G$ is as in Lemma~\ref{lem:frak}. Then  $\+ C^+_G$ is $\aleph_0$-categorical,
    and so $\+ C_G$ is $\aleph_0$-categorical as well. 
\end{fact}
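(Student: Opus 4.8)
The plan is to read off $\aleph_0$-categoricity from the classical Ryll-Nardzewski (Engeler--Svenonius) theorem, which asserts that a countable structure in a countable signature is $\aleph_0$-categorical if and only if its full automorphism group acts oligomorphically on its domain. First I would check the bookkeeping needed to apply it. The domain of $\+ C^+_G$ is that of $\+ C_G$, which is countable: the oligomorphic (hence Roelcke precompact) group $G$ has only countably many open subgroups, so only countably many cosets of $P$-subgroups. Moreover the added predicates, one for each $n$-orbit of $\Inn(G)$ on $\+ C_G$, form a countable signature, since by Lemma~\ref{lem:frak} there are only finitely many such orbits for each fixed $n$.

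The heart of the argument is to show that $\Aut(\+ C^+_G)$ is oligomorphic on the domain, and I would do this by sandwiching it between $\Inn(G)$ and an oligomorphic group. The group $\Inn(G)$ acts on $\+ C_G$ by automorphisms, and each inner automorphism fixes every named orbit-predicate setwise (it maps each tuple to a tuple in the same $\Inn(G)$-orbit); hence $\Inn(G) \leq \Aut(\+ C^+_G)$. By Lemma~\ref{lem:frak}, $\Inn(G)$ already has only finitely many orbits on each power of the domain. Since the orbits of the larger group $\Aut(\+ C^+_G)$ are unions of $\Inn(G)$-orbits, $\Aut(\+ C^+_G)$ likewise has only finitely many orbits on each power; that is, it is oligomorphic. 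Ryll-Nardzewski then yields that $\+ C^+_G$ is $\aleph_0$-categorical.

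Finally I would deduce the statement for $\+ C_G$. Since $\+ C_G$ is a reduct of $\+ C^+_G$, every automorphism of $\+ C^+_G$ is an automorphism of $\+ C_G$, so $\Aut(\+ C_G) \supseteq \Aut(\+ C^+_G)$ is again oligomorphic, and a second application of Ryll-Nardzewski gives $\aleph_0$-categoricity of $\+ C_G$ (equivalently, one may invoke the standard fact that a reduct of an $\aleph_0$-categorical structure is $\aleph_0$-categorical).

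I do not expect a genuine obstacle: the entire substance of the result is contained in Lemma~\ref{lem:frak}, and this Fact is essentially a translation of the oligomorphicity of the $\Inn(G)$-action into $\aleph_0$-categoricity via Ryll-Nardzewski. The only points requiring care are the two bookkeeping checks above, namely that the expansion keeps the signature countable (which rests on Lemma~\ref{lem:frak}) and that $\Inn(G)$ lands inside $\Aut(\+ C^+_G)$ and not merely inside $\Aut(\+ C_G)$.
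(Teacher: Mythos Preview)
Your proposal is correct and follows exactly the reasoning the paper intends: the paper derives this Fact in one line (``By Lemma~\ref{lem:frak} we have:''), and your argument simply unpacks that line by checking countability of the domain and signature, observing that $\Inn(G)$ embeds into $\Aut(\+ C^+_G)$, and invoking Ryll--Nardzewski together with the standard fact that reducts of $\aleph_0$-categorical structures are $\aleph_0$-categorical.
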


	\begin{proof} By~\ref{lem:frak} the action by automorphisms of $\Inn(G)$ on  $\+ C_G$ is oligomorphic. By definition, each such automorphism is also  an automorphism of $\+ C_G^+$. Thus $\Aut(\+ C_G^+)$ is oligomorphic, and hence $\+ C_G^+$ is $\aleph_0$-categorical.
	\end{proof}
Groups $G \leq_c \Sym(\omega)$ such that $G$ is (topologically) isomorphic to an oligomorphic group are called \textit{quasi-oligomorphic} in  \cite[Section 4.3]{coarse}. The authors  show that this property is Borel, and the complexity of their  isomorphism relation is Borel equivalent to the isomorphism relation between oligomorphic groups. 
 
\begin{theorem}\label{second_th-rec}  In addition to  Assumption~\ref{ass:1}, suppose  that for each $G \in \+ V$, there are only finitely many conjugacy classes of $P$-subgroups. 

\begin{enumerate}[(1)] \item 	Topological isomorphism on  $\+ V$ is smooth. 

\item   $\mrm{Aut} (G)$ is quasi-oligomorphic  for each  $G \in \+ V$.

%\item  
%Where $G = \mrm{Aut}(M)$, we have $\mrm{Aut} (G)/\Inn(G)\cong_{\mrm{top}} \mrm{Aut}(\+ C_G)/\mrm{Aut}(\+ C^+_G)$.
\item $\mrm{Out} (G)$ is profinite  for each  $G \in \+ V$.
  \end{enumerate}
\end{theorem}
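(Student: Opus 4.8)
The plan is to read off all three items from the duality of Lemma~\ref{the_first_crucial_lemma} together with the $\aleph_0$-categoricity of $\+ C_G$ recorded in \ref{useful_prop}. Concretely, Lemma~\ref{the_first_crucial_lemma}(1)--(2) says that for $G,H\in\+ V$ one has a topological isomorphism $G\cong H$ if and only if there is a structure isomorphism $\+ C_G\cong\+ C_H$, and (3) gives a topological isomorphism $R\colon\Aut(G)\to\Aut(\+ C_G)$. This lets me transport every question about isomorphism and about $\Aut$, $\Out$ of groups in $\+ V$ to the countable structures $\+ C_G$, each of which is $\aleph_0$-categorical under the standing finiteness hypothesis.

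For (1) I would compose the assignment $G\mapsto\+ C_G$ with the map sending a countable structure to its complete first-order theory in the fixed language of the structures $\+ C_G$. The assignment $G\mapsto\+ C_G$ is Borel for the same reasons $G\mapsto M_G$ is: $P$ is Borel and the cosets of $P$-subgroups can be enumerated Borel-measurably from $G$. Since each $\+ C_G$ is $\aleph_0$-categorical, $\+ C_G\cong\+ C_H$ holds iff $\mathrm{Th}(\+ C_G)=\mathrm{Th}(\+ C_H)$; combined with the equivalence above, $G\mapsto\mathrm{Th}(\+ C_G)$ is then a Borel reduction of topological isomorphism on $\+ V$ to equality on the standard Borel space of complete theories, which is exactly smoothness. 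The only routine point to check is that satisfaction of a fixed sentence in $\+ C_G$ depends Borel-measurably on $G$, which follows by induction on formulas since the domain is countable.

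Item (2) is then immediate: by Lemma~\ref{the_first_crucial_lemma}(3), $\Aut(G)\cong\Aut(\+ C_G)$ topologically, and since $\+ C_G$ is a countable $\aleph_0$-categorical structure its automorphism group is oligomorphic. For (3), set $L=\Aut(\+ C_G)$ and let $K=R(\Inn(G))$ be the image of the closed normal subgroup $\Inn(G)$ (closed by \ref{prop: Inn closed}); then $R$ induces a topological isomorphism $\Out(G)\cong L/K$. The sets $VK/K$ for $V$ open in $L$ are open subgroups of $L/K$ (each $VK$ is open in $L$, being a union of cosets of $V$) and form a neighbourhood basis of the identity, so $L/K$ is non-Archimedean, hence totally disconnected (consistent with \ref{prop: tdlc}); it therefore suffices to show $L/K$ is compact, equivalently that each such $VK/K$ has finite index. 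By \ref{lem:frak}, $K$ acts oligomorphically on the domain $D$ of $\+ C_G$, so the set $D^n/K$ of $K$-orbits on $D^n$ is finite for every $n$. Taking $V=L_{(\bar a)}$ for $\bar a\in D^n$, a short computation using normality of $K$ identifies $L_{(\bar a)}K$ with the $L$-stabiliser of the point $K\bar a$ for the natural action of $L$ on the finite set $D^n/K$; hence $[L:L_{(\bar a)}K]$ is the size of an $L$-orbit inside $D^n/K$ and is finite. Since these subgroups form a neighbourhood basis, every open subgroup of $L/K$ has finite index, so $L/K$ is compact and therefore profinite.

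The main obstacle is the compactness argument in (3). Everything there turns on the identification of $L_{(\bar a)}K$ as the $L$-stabiliser of a point in the \emph{finite} quotient set $D^n/K$, which is precisely what converts the oligomorphy of the inner action (Lemma~\ref{lem:frak}) into finiteness of index; the subtlety that a quotient of a non-Archimedean group need not remain non-Archimedean is handled by producing the open subgroups $VK/K$ explicitly. Items (1) and (2) are then bookkeeping on top of the duality and the $\aleph_0$-categoricity of $\+ C_G$.
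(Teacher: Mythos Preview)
Your arguments for (1) and (2) are correct and are essentially the paper's own: reduce to $\+ C_G$ via Lemma~\ref{the_first_crucial_lemma}, use $\aleph_0$-categoricity from \ref{useful_prop}, and read off smoothness from $G\mapsto\mathrm{Th}(\+ C_G)$ and oligomorphy of $\Aut(\+ C_G)$. The paper is more explicit about why $G\mapsto\+ C^P_G$ is Borel (it factors through $\+ M(G)\to\+ W(G)\to\+ W^P(G)$ and then interprets $\+ C^P_G$ uniformly), but your sketch points in the same direction.

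For (3) you take a genuinely different route. The paper observes that the closure of $K=R(\Inn(G))$ in $\Aut(\+ C_G)$ is $\Aut(\+ C_G^+)$ by the very definition of $\+ C_G^+$; since $K$ is already closed (by \ref{prop: Inn closed} and the fact that $R$ is a homeomorphism), $K=\Aut(\+ C_G^+)$, and then $\Out(G)\cong\Aut(\+ C_G)/\Aut(\+ C_G^+)$ is profinite by Corollary~\ref{profinite_lemma+}. Your argument bypasses $\+ C_G^+$ and the model-theoretic Corollary~\ref{profinite_lemma+} entirely: you use only that $K$ acts oligomorphically on $D$ (Lemma~\ref{lem:frak}) and that $K$ is normal in $L$, so that $L$ acts on the finite set $D^n/K$ and the $L$-stabiliser of $K\bar a$ is exactly $L_{(\bar a)}K$; this makes every basic open subgroup of $L/K$ have finite index, whence $L/K$ is compact and non-Archimedean, i.e.\ profinite. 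Your computation is correct (normality of $K$ is what makes the action on $D^n/K$ well-defined and gives $\mathrm{Stab}_L(K\bar a)=KL_{(\bar a)}=L_{(\bar a)}K$). The trade-off: your argument is more elementary and self-contained, while the paper's approach has the conceptual payoff of identifying $\Inn(G)$ with the automorphism group of the orbital expansion $\+ C_G^+$, linking back to the framework of Section~\ref{s:modelty}.
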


\begin{proof} (1)   The structures with domain $\omega$ in a fixed countable  signature $L$  can be seen as a standard Borel space $\mrm{Mod}(L)$ \cite[Section 3.5]{gao}.  By  \cite[Section 3.3]{nies}, there is a Borel function taking an oligomorphic  group $G$ to a bijection $ \nu_G\colon \+   \omega \to \mrm{dom} (\+ M(G))$  (this uses a  result of Lusin-Novikov    in the version of  \cite[18.10]{Kechris:95}).  Via this bijection we can view $\+ M(G)$ as a structure with domain $\omega$ in such a way that  the map $G \mapsto \+M(G)$ is Borel. The structures $\+ M(G)$ and the meet groupoid $\+ W(G)$ are interdefinable in a uniform way \cite[Remark 2.16]{coarse}, so the map $G \to \+ W(G)$ is also Borel in this sense. Then, since $P$ is a Borel relation, the map $G \mapsto \+ W^P(G)$ is Borel,  where we define $\+ W^P(G)$ as the structure $\+ W(G)$ extended by a unary predicate  saying that a subgroup satisfies $P$.  
 The  domain of the structure $\+ C^P_G$  can be first-order defined in   $\+ W^P(G)$  as the set of cosets of subgroups that satisfy the property $P$. 
 
 We claim  that    each relation    and operation of  $\+ C^P_G$ in Definition~\ref{def_expanded_12_extra} can also be   defined in   $\+ W^P(G)$ by a first-order formula that does not  depend on $G$ (and also not on  $P$).    For (a)  in \ref{def_expanded_12_extra},  this holds  because $\mrm{Dom}(B)=U$  iff $B \cdot B^{-1} = U$ and  $\mrm{Cod}(B)=V$ if $B^{-1}\cdot B = V$. For (b)  in \ref{def_expanded_12_extra}, one uses that $\+ W^P(G)$ contains the binary intersection operation and a constant for $\emptyset$, and for (c) it is immediate.
 
  Viewing $\+ W^{P}(G)$  as a structure with  domain $\omega$ as above, 
 our  claim   shows that the map $G \mapsto \+ C^P_G$ is   Borel.  By Lemma~\ref{the_first_crucial_lemma} and Fact~\ref{useful_prop}, the Borel map $G \mapsto \mrm{Th}(\+ C^P_G)$ now establishes smoothness of $\cong$ on $\mathcal{V}$. 
%  \gianluca{I can't find a proof that the class $\mathcal{B}_P$ is Borel. Please provide one. OK see above. 
%  \\That's not how I would have written things. I would have assume that $\mathcal{B}_P$ is Borel. As we don't know that $\mathcal{B}_P$ is the collection of $\mathfrak{G}$-finite groups where $P$ is being essential. So we can't directly use this to infer the second main result.}  \andre{it's a little more work to do this. I'll do it once you are fine with the P thing. We shouldn't assume that $B_P$ is Borel if this really from that  $P$ is Borel.} \gianluca{I am fine with the $P$ thing, so please go ahead. I now see the logic of your way of phrasing things. In particular I agree that this gives a slightly more general result. $\mathfrak{G}$-finiteness should be mentioned as the canonical example of when essential subgroups exist, I think that you agree on this, this is why you left 4.8 below.}

\smallskip 
\n  (2)  By    Lemma \ref{the_first_crucial_lemma}(3),   $\mrm{Aut} (G) \cong \mrm{Aut}(\+ C^P_G)$.  Now apply Fact~\ref{useful_prop}.

\smallskip

\n 
%For (3) let $F \colon \Inn(G) \to \Aut(\+ C_G)$ be the embedding given by the subaction of $\Inn(G)$ on $\+ C_G$  by automorphisms. Thus $F$ is the restriction of $R$ to $\Inn(G)$.  
 (3) Let $R$ be as in Lemma~\ref{the_first_crucial_lemma} for $H=G$. By the definition of $\+ C_G^+$, the closure of $R(\Inn(G))$ equals  $\Aut(\+ C_G^+)$.
%   \gianluca{Please provide more details for the previous sentence. Please don't say "I don't know what has to be added". If I ask it means that either it is not completely clear or a sentence can help.}\andre{This is a general fact which works for actions where the image in the premutation group is not closed. We've colored all the Inn(G) orbits to get $\+ C_G^+$, so the closure  is Aut of it.  Not sure about a ref but this is so basic I think...} \gianluca{OK, fine. I of course believe it and I don't know of a reference either. It's just that if the referee asks we have to find a way to convince him/her of this.} 
 By Theorem~\ref{prop: Inn closed},  $\Inn(G)$ is closed in $\Aut(G)$; since  $R$ is a topological isomorphism, $R(\Inn(G))$ must in fact be closed,  and hence equal to $\Aut(\+ C_G^+)$. Also  $\Aut(\+ C_G^+)$ is normal in $ \Aut(\+ C_G)$ because $\Inn(G)$ is normal in $\Aut(G)$.  Therefore
$$\mrm{Aut} (G)/\Inn(G) \cong \mrm{Aut}(\mathcal{C}_G)/\mrm{Aut}(\mathcal{C}^+_G).$$ 
The statement  (3) now follows by  invoking Lemma~\ref{profinite_lemma+}. 
% and \ref{profinite_lemma}, as $\+ C_G$ is $\aleph_0$-categorical. %Concerning (4), this is what (3)(ii) says.
\end{proof}

	  We use  the foregoing result to show that in fact,  (3)   implies (2) for \textit{any}  oligomorphic group  $G\leq_c \Sym(\omega)$.  
	\begin{proposition}
		Suppose  that $G$ is oligomorphic and $\Out(G)$ is profinite. Then $\Aut(G)$ is quasi-oligomorphic.
	\end{proposition}
	
	\begin{proof}
		We will apply Theorem~\ref{second_th-rec}  to the Borel class $\+ V= \{G\}$ and the property $P(U,G)$ saying that $U$ is automorphic to $G_a$ for some $a \in \omega$.   
		
\smallskip \noindent		The group 	$\Aut(G)$ acts continuously on the (discrete) space of open subgroups of~$G$:  for this it suffices to verify that $\{ \aaa \in \Aut(G) \colon \aaa(U)= U\}$ is open for each  $U \leq_o G$, which follows from the definition of the topology on $\Aut(G)$ in~\ref{df:nbhd}. Therefore $\Out(G)$ acts continuously on the discrete space of conjugacy classes of open subgroups of~$G$.  Since $\Out(G)$ is profinite, every orbit of this action is finite.  There are only finitely many conjugacy classes of subgroups of the form $G_a$. The union of the  orbits of these conjugacy classes  under the $\Out(G)$ action consists  therefore of  finitely many conjugacy classes $\+ C_1, \ldots, \+ C_m$.  The property $P(U,G)$ holds precisely  when $U \in \bigcup_r \+ C_r$. Thus  $P$ is Borel and   there are only finitely many conjugacy classes of $P$-subgroups. 		
	The Assumptions~\ref{ass:1}  are satisfied, so $\Aut(G)$ is quasi-oligomorphic  by  (2) of Theorem~\ref{second_th-rec}. 	
\end{proof}

\section{Two smooth classes} \label{smooth section} We apply the general criterion of Theorem~\ref{second_th-rec} to two Borel classes of oligomorphic groups, thereby showing that their isomorphism relation on the class is smooth, the automorphism group of each such group is profinite, and hence  its  automorphism group is quasi-oligomorphic (i.e., isomorphic to an oligomorphic group).

An action of a  group $G$   on a set $X$   corresponds to a  homomorphism $G \to \mathrm{Sym}(X)$.   One says that $(G,X)$  is \emph{faithful }if this homomorphism is 1-1, and  $(G,X)$ is \emph{closed} if its range is closed in $\Sym(X)$. 
\subsection{Groups with no algebraicity}  \label{s: no alg}
   Let $G\le \Sym(\omega)$ be oligomorphic. Recall from~\ref{df: MG} that   $M_G$ denotes the canonical structure $M$ such that $\Aut(M)= G$. A countable  structure $M$ has \emph{no algebraicity} if $\acl(A) = A$ for each $A \sub M$.  
	\begin{definition} \label{no alg}   $G$    has \emph{no algebraicity} if   for each finite set $S \subseteq \omega$, the action of $G_{(S)}$ on $\omega$  has no finite $1$-orbits on $\omega \setminus S$.  Equivalently, the structure $M_G$ has no algebraicity. \end{definition} 
Note that this property depends on $G$ as a permutation group.
  It   is clearly a Borel property of structures with domain $\omega$ to have no algebraicity. Since  the operation $G \mapsto M_G$ is Borel~\cite[Secton 1.3]{coarse},  the class of groups with no algebraicity is Borel.

  In the following, let $N$ be the substructure of $M^{\mrm{eq}}$ with domain  a sort $S= D/E$.   We say that $N$ is  \emph{dense} in  $M^{\mrm{eq}}$ if   for every $b \in \mrm{M}^{\mrm{eq}} $ there is a finite subset $A \subseteq N$ such that $b$ is definable from $A$ in $M^{\mrm{eq}}$. (This property  is called ``regular" in \cite{rubin}.)
% 
%	\begin{notation}[{\cite[Definition~2.5]{rubin}}] For   a structure $M$ and  a complete $n$-type $p$ in $M$, by  $M_p = p(M)$ one denotes $\{\bar{a} \in M^n : \mrm{tp}_M(\bar{a}) = p\}$.
%\end{notation}
Note that  $\Aut(M)$ acts naturally on $N$.

\begin{fact}[{by \cite[Lemma~2.10]{rubin}}]\label{Rubin_regular} Suppose that $(\mrm{Aut}(M), N)$    is faithful,  and closed. Then $N$ is a dense
substructure of $M^{\mrm{eq}}$.
\end{fact}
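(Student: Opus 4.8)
The plan is to reformulate density as the coincidence of two Polish group topologies on $G = \mrm{Aut}(M)$. First I would record the correct restatement: for $b \in M^{\mrm{eq}}$ and a finite $A \subseteq N$, coding the tuple of elements of $A$ as a single imaginary $\aaa$ via a product sort gives $G_\aaa = G_{(A)}$, and the many-sorted form of Fact~\ref{fac2} then yields $G_{(A)} \subseteq G_b$ if and only if $b \in \mrm{dcl}(A)$, i.e.\ $b$ is definable from $A$ in $M^{\mrm{eq}}$. Hence $N$ is dense in $M^{\mrm{eq}}$ if and only if for every $b \in M^{\mrm{eq}}$ there is a finite $A \subseteq N$ with $G_{(A)} \subseteq G_b$.

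Next I would introduce the topology $\tau_N$ on $G$ induced by its action on $N$, whose basic open subgroups are the pointwise stabilisers $G_{(A)}$ of finite sets $A \subseteq N$. Since $N$ is a sort of $M^{\mrm{eq}}$, each such $G_{(A)}$ is a finite intersection of stabilisers of imaginaries, hence open in the original topology of $\mrm{Aut}(M)$; therefore $\tau_N$ is coarser than the original topology, so the identity map $(G,\mrm{orig}) \to (G,\tau_N)$ is a continuous group homomorphism. Faithfulness of $(\mrm{Aut}(M),N)$ guarantees that $\tau_N$ is Hausdorff, while closedness guarantees that the image of $G$ in $\mrm{Sym}(N)$ is a closed subgroup, hence Polish; transporting this topology back along the (bijective, by faithfulness) action homomorphism, $(G,\tau_N)$ is itself a Polish group.

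Then I would invoke automatic continuity: the identity is a continuous bijective homomorphism between the Polish groups $(G,\mrm{orig})$ and $(G,\tau_N)$, hence a topological isomorphism by the open mapping theorem (see \cite[2.3.4]{gao}). Consequently the two topologies coincide. Finally, given $b \in M^{\mrm{eq}}$, the stabiliser $G_b$ is open in the original topology, hence open in $\tau_N$, and so contains one of the basic open subgroups $G_{(A)}$ with $A \subseteq N$ finite; this is precisely the inclusion $G_{(A)} \subseteq G_b$ required for density.

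The step I expect to be the main obstacle is verifying that $(G,\tau_N)$ is genuinely Polish and that automatic continuity applies cleanly: one must check that the faithful and closed hypotheses together present $G$ as a \emph{closed}, and hence Polish, subgroup of $\mrm{Sym}(N)$, and that the identity between the two topologies is a bona fide continuous bijective homomorphism, so that automatic continuity of its inverse forces equality of the topologies. Everything else reduces to the standard dictionary between open subgroups, stabilisers of imaginaries, and definable closure already established in Fact~\ref{fac2} and Lemma~\ref{evans_fact}.
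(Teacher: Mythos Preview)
The paper does not supply its own proof of this fact; it merely cites Rubin. Your argument is correct and self-contained, and in fact the very same Polish-group open-mapping manoeuvre you describe is used later in the paper, in the proof of Proposition~\ref{modelth background}, direction (ii)$\Rightarrow$(i): there the action map $r\colon G\to \Sym(S)$ is observed to be a continuous bijection between Polish groups, hence a homeomorphism, so every open $H\leq G$ has $r(H)$ open and therefore contains some $G_{(B)}$ with $B\subseteq S$ finite. That is exactly your argument, and (since $S$ is a single orbit) ``finite intersection of conjugates of $G_e$'' there coincides with your ``$G_{(A)}$ for finite $A\subseteq N$''. So your proof is not only correct but aligned with the method the paper itself employs for the equivalent statement elsewhere.
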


	\begin{fact}[{by \cite[Lemma~2.11]{rubin}}]\label{Rubin_hammer} Suppose that    $E$ is non-degenerate   in the sense of Definition~\ref{df:degen}.  Suppose that $M$ has no algebraicity, $N  $  has no algebraicity, and $N$ is     dense in $M^{\mrm{eq}}$. 
	 Then %$M = N$.
	 $D=M$ and $E$ is the identity equivalence relation.
\end{fact}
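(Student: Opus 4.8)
The plan is to work inside $G=\Aut(M)$ and to show that a generic element $s=\bar a/E$ of the sort $S=D/E$ (with $\bar a=(a_1,\dots,a_n)\in D$) is \emph{interalgebraic} with the finite set of home-sort elements it ``remembers'', and then to use no algebraicity of $N$ to force this set to be a single point. Throughout I write $\mathrm{supp}(s)=\acl(s)\cap M$, which is finite since $M$ is $\aleph_0$-categorical; it is the union of the finite $G_s$-orbits inside the home sort, hence a set $0$-definable from $s$.

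First I would record the two directions of definability. On one side $s=\bar a/E\in\dcl(\bar a)$, so $s\in\dcl(a_1,\dots,a_n)$. On the other side, density of $N$ together with $\aleph_0$-categoricity yields a \emph{uniform coding}: finitely many formulas define partial functions $S^m\rightharpoonup M$ whose ranges cover $M$, so every home-sort element lies in the definable closure of some finite tuple from $S$. The core step is then to prove that non-degeneracy keeps the coordinates attached to $s$, i.e.\ that $a_i\in\acl(s)$ for every $i$, so that $\{a_1,\dots,a_n\}\subseteq\mathrm{supp}(s)$ and the class $s$ is recovered from $\mathrm{supp}(s)$; consequently $s$ and the finite set $\mathrm{supp}(s)$ are interalgebraic. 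Here non-degeneracy is used exactly as in Definition~\ref{df:degen}: if some coordinate had infinite $G_s$-orbit one could vary it freely over the class while fixing $s$, and tracking this through the coding functions produces a proper $\sigma\subsetneq n$ with $E_{\sigma,D}\subseteq E$, contradicting non-degeneracy.

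With interalgebraicity in hand I would run an ``overlap'' (triangle) argument to bound the support. Suppose $|\mathrm{supp}(s)|\ge 2$ for generic $s$. Using $\aleph_0$-categoricity and no algebraicity of $M$ one can choose $s,s'\in S$ whose supports share some but not all of their points; since each of $s,s'$ is algebraic over its support while, conversely, its support lies in its algebraic closure, the shared data makes a \emph{third} element $s''\in S$ algebraic over $\{s,s'\}$ with $s''\notin\{s,s'\}$ --- precisely the configuration realised by ordered (or unordered) pairs of vertices. This contradicts no algebraicity of $N$, so $\mathrm{supp}(s)$ is a single point for generic $s$. Interalgebraicity then forces $n=1$ and $E=\mathrm{id}$, so $S$ is a definable subset $D\subseteq M$ carrying its induced structure. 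Finally $D=M$: by density every element of $M$ lies in $\dcl$ of finitely many elements of $D$, while no algebraicity of $M$ gives $\dcl(A)\cap M=A$ for $A\subseteq D$; hence no element of $M\setminus D$ can be defined from $D$, so $M\setminus D=\varnothing$.

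I expect the main obstacle to be the middle step: showing that non-degeneracy forces each coordinate $a_i$ to be algebraic over $s$ (equivalently that each $E$-class meets each coordinate in a finite set), and then that no algebraicity of $N$ collapses the support to one point. This is the genuinely combinatorial heart of the statement --- it is exactly what fails for the ``pairing'' sorts such as $\binom{M}{2}$, where no algebraicity of $N$ breaks down through the triangle configuration --- and it is the content of Rubin's lemma; everything else is bookkeeping around the two-way definability and $\aleph_0$-categoricity.
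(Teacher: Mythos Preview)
The paper does not give its own proof of this fact: it is stated as a citation to Rubin~\cite[Lemma~2.11]{rubin} and used as a black box. So there is no in-paper argument to compare your proposal against; what you have written is an attempt to reconstruct Rubin's proof, not to reproduce anything appearing here.

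As a sketch of Rubin's argument, your outline has the right architecture (interalgebraicity of $s$ with a finite subset of the home sort, then using no algebraicity of $N$ to collapse that set to a point, then density plus no algebraicity of $M$ to get $D=M$), and you are honest that the middle step is the hard one. But two of your justifications do not go through as written. First, the claim that ``if some coordinate had infinite $G_s$-orbit one could vary it freely\dots producing a proper $\sigma\subsetneq n$ with $E_{\sigma,D}\subseteq E$'' is not immediate: elements of $G_s$ moving $a_i$ will in general move other coordinates too, so you do not directly obtain tuples agreeing off a single slot; turning an infinite $G_s$-orbit of $a_i$ into genuine degeneracy of $E$ requires an additional argument (this is where Rubin does real work, using no algebraicity of $M$ and the density/coding to control the other coordinates). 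Second, the triangle step --- producing $s''\in\acl_N(s,s')\setminus\{s,s'\}$ from overlapping supports --- is plausible by analogy with $\binom{M}{2}$ but is not established in general by what you wrote; one needs to argue that the overlap data is itself visible from within $N$, which again uses density in a nontrivial way. These are exactly the places where the content of \cite[Lemma~2.11]{rubin} lives, so your proposal is best read as a correct roadmap with the substantive lemmas still to be filled in from Rubin's paper.
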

 
 The next result  together with  Theorem~\ref{second_th-rec} yields Theorem~\ref{great theorem} for the first class.
\begin{lemma}  \label{lem: no alg}The hypotheses  of Theorem~\ref{second_th-rec} are satisfied in the  following setting:   

\begin{itemize} \item   $\+ V$ is the class of groups with no algebraicity. \item  $P(U,G)$ holds if $G \in \+ V$ and $U = G_{  a }$ for some   $  a \in \omega$.  \end{itemize}
\end{lemma}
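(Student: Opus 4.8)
The plan is to verify the two hypotheses of Theorem~\ref{second_th-rec} for this choice of $\+ V$ and $P$: first, that $P$ is an invariant Borel sub-basic relation for $\+ V$ (Assumption~\ref{ass:1}), and second, that each $G \in \+ V$ has only finitely many conjugacy classes of $P$-subgroups. The second hypothesis is immediate: the $P$-subgroups are exactly the point stabilisers $G_a$ for $a \in \omega$, and two stabilisers $G_a, G_b$ are conjugate in $G$ precisely when $a,b$ lie in the same $1$-orbit. Since $G$ is oligomorphic, there are only finitely many $1$-orbits, hence only finitely many conjugacy classes of $P$-subgroups.

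For Assumption~\ref{ass:1}, I would proceed through three checks. \emph{Borelness}: the map $G \mapsto M_G$ is Borel and having no algebraicity is a Borel property of structures, so membership in $\+ V$ is Borel; the relation $P(U,G)$ — that $U$ is the stabiliser of some single point — is readily seen to be Borel as well. \emph{Invariance}: if $\alpha \colon G \to H$ is a topological isomorphism between groups in $\+ V$, I must show $\alpha$ carries point stabilisers of $G$ to point stabilisers of $H$. The point stabilisers $G_a$ are exactly the open subgroups $U$ of $G$ of the form $G_{\bar a}$ with $\bar a$ a single element; model-theoretically, by Lemma~\ref{fac1}(i) and (iii), every open subgroup is $G_{\bar a /E}$ for a minimal tuple $\bar a$ with non-degenerate $E$, and I want to characterise \emph{intrinsically} (in a way visible to any topological isomorphism) which of these are single-point stabilisers. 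This is where the no-algebraicity hypothesis and Rubin's machinery enter.

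The heart of the argument — and the main obstacle — is establishing that $P$ is \textbf{sub-basic} and invariant, i.e.\ that ``being a point stabiliser'' is detectable purely from the topological group structure of $G$ for groups with no algebraicity. Sub-basicity itself is easy: given any open subgroup $H$ of $G$, pick $\bar a = (a_1,\dots,a_n)$ with $G_{\bar a} \leq H$; then $G_{\bar a} = G_{a_1} \cap \dots \cap G_{a_n}$ is a finite intersection of $P$-subgroups, as required. The delicate part is invariance. The strategy is to give a characterisation of point stabilisers among open subgroups that is preserved under topological isomorphism. Here is where Facts~\ref{Rubin_regular} and \ref{Rubin_hammer} do the work: if $\alpha \colon G \to H$ is an isomorphism, it induces an action of $G$ on the substructure $N \subseteq M_H^{\mrm{eq}}$ whose domain is the sort corresponding to a stabiliser $H_b$, and this action is faithful and closed; by Fact~\ref{Rubin_regular}, $N$ is dense in $M_H^{\mrm{eq}}$, and since $\+ V$ consists of groups with no algebraicity, Fact~\ref{Rubin_hammer} forces the corresponding sort $D/E$ to satisfy $D = M_H$ with $E$ the identity — that is, the sort is the home sort, so $\alpha(H_b)$ is genuinely a single-point stabiliser in $G$. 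I expect the technical care to lie in verifying the faithfulness and closedness hypotheses of Fact~\ref{Rubin_regular} for the induced action, and in correctly matching the minimal-length/non-degeneracy data from Lemma~\ref{fac1}(iii) to the non-degeneracy hypothesis of Fact~\ref{Rubin_hammer}. Once invariance is in hand, all hypotheses of Theorem~\ref{second_th-rec} are met, yielding smoothness of isomorphism, oligomorphy of $\Aut(G)$, and profiniteness of $\Out(G)$ for this class.
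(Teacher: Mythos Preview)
Your overall strategy matches the paper's: the nontrivial point is invariance of $P$, and you correctly identify Rubin's Facts~\ref{Rubin_regular} and~\ref{Rubin_hammer} together with the non-degeneracy from Lemma~\ref{fac1}(iii) as the engine. However, your setup for applying them has a real gap. You propose to look at the sort $N\subseteq M_H^{\mrm{eq}}$ ``corresponding to a stabiliser $H_b$'' (or, symmetrically, the sort in $M_H^{\mrm{eq}}$ arising from a single $\alpha(G_a)$) and to feed this $N$ into Fact~\ref{Rubin_regular}. But faithfulness of the action of $\Aut(M_H)$ on a \emph{single} such orbit can fail when $G$ is not transitive: the kernel is the normal core of $H_e=\alpha(G_a)$, which via $\alpha$ is $\bigcap_{g\in G} G_{g\cdot a}=G_{(G\cdot a)}$, and this is nontrivial whenever $G\cdot a\neq\omega$ (e.g.\ take $M$ the disjoint union of two random graphs). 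So Fact~\ref{Rubin_regular} need not apply to your $N$, and the argument stalls exactly where you flagged ``technical care''. There is also some directional confusion in your write-up (you place $N$ in $M_H^{\mrm{eq}}$ but conclude about ``$\alpha(H_b)$ in $G$''); this is cosmetic, but it obscures the real issue.

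The paper's fix is to avoid working one stabiliser at a time. It invokes the Ahlbrandt--Ziegler correspondence \cite[Cor.~1.4]{biinte}: a topological isomorphism $\Phi\colon G\to H$ comes from a bi-interpretation, so there is a single sort $S=D/E$ of $M_G^{\mrm{eq}}$ and a surjection $f\colon D\to M_H$ with kernel $E$ such that $\Phi=\Aut(f)$. Now $(G,N)$ (with $N$ the induced structure on $S$) is isomorphic as a permutation group to $(G,M_H)$, which is faithful and closed because the interpretation is part of a bi-interpretation; and $N$ has no algebraicity because $(H,M_H)$ does not. After reducing to a non-degenerate $E$, Fact~\ref{Rubin_hammer} gives $D=M_G$ and $E=\mathrm{id}$, so $f$ is a bijection $M_G\to M_H$ with $\Phi(g)=f\circ g\circ f^{-1}$. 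Invariance of $P$ is then immediate: $\Phi(G_a)=H_{f(a)}$. If you want to salvage your stabiliser-by-stabiliser approach without citing \cite{biinte}, you must take representatives $a_1,\ldots,a_k$ of \emph{all} $1$-orbits and work with the union of the corresponding sorts (as in the proof of Theorem~\ref{th:no alg detail}); a single orbit will not do.
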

\begin{proof}  To verify that $P$ is   Borel, we use the standard methods surveyed in Kechris et al.~\cite[Section~2]{nies}. By $ \+ U(S_\infty)$ they denote the Borel  space of closed subgroups of $S_\infty$, which is a subspace of the usual Effros space $\+ F(S_\infty)$.  They verify that inclusion is Borel on this space. For $H \in \+ U(S_\infty)$, by  $T_H$ they denote  a tree of strings over $\omega$ describing $H$;  the map $H \mapsto T_H$ is Borel.   For the Borelness of $P$ it  suffices to  show that the map $\omega \times  \+ U(S_\infty) \to \+ U(S_\infty)$ that sends $(a,G)$ to $G_a$ has  Borel graph.  For $H \in  \+ U(S_\infty)$, we have \begin{center}
$H= G_a \Leftrightarrow H \sub G \land \forall \beta \in T_H [|\beta|  > a \rightarrow \beta(a) = a]  $.
	\end{center}  This condition is clearly Borel. 

For each $G \in \+ V$,  the number of conjugacy classes of $P$-subgroups equals the number of 1-orbits of $G$, which is finite. Clearly, the $P$-subgroups generate the filter of open subgroups of $G$.  It remains to show that $P$ is invariant for groups in $\+ V$ in the sense of Definition~\ref{def_sub-basic}(1).  

\smallskip 

\begin{claim}   {\it Suppose $G,H \in \+ V$.  For each  isomorphism $\Phi \colon G\to H$  there is  a bijection $f\colon M_G \to M_H$ such that $\Phi(g)= f \circ g \circ f^{-1}$ for each $g \in G$. } \end{claim}  

\smallskip

\n Assuming the claim,   $\Phi(G_a) = H_{f(a)}$ for each $ a\in \omega$. Thus $P$ is preserved.  

\n \emph{We  verify the claim using~\ref{Rubin_hammer}}. Note that $\Phi$   corresponds to a bi-interpretation of $M_G$ and $M_H$ by \cite[Cor.\ 1.4]{biinte}.  In particular, there are a sort $S= D/E$ of $M_G$ and an onto map $f \colon D \to M_H$ with kernel $E$ such that $\Phi = \Aut(f)$ in the sense of \cite[1.3]{biinte}. Note that $G$ acts on $M_H$ via the given interpretation of $M_H$ in $M_G$.  

Let $N$ be the substructure of $M_G^{\mrm{eq}}$ with domain $S$.  Since $M_G$ and $M_H$ are bi-interpretable,   the   action of $\mrm{Aut}(M_G)$ on $M_H$ is   faithful and closed. (This is easy to verify  directly, but  also follows from the independently established Proposition~\ref{modelth background} below.) Hence   the action of $G$ on $N$ is   faithful and closed as well.  

We can assume   $E$ is non-degenerate (Definition\ \ref{df:degen}): otherwise, say $E$ contains the equivalence   relation $E_{\{0, \ldots, k-2\}, D}$   expressing  that   two $k$-tuples over  $D$ agree on the first $k-1$ elements. Let $D'=p(D)$ where $p$ is   the definable projection $M^k\to M^{k-1} $ that erases the last component. Then $D'$ is definable, and $p$ yields an $M$-definable bijection $D/E \to D'/p(E)$. So we can replace $N$ with the structure $N'$ that has domain $D'/p(E)$. 

 Clearly $(G,M_H)$ and $(G,N)$ are isomorphic as permutation groups. So one has   algebraicity iff the other does. By hypothesis, this implies that $N$   has no algebraicity. 
%\andre{is this right?  It is right if $M_H$ and $N$ are interdefinable via the bijection induced by $f$. Otherwise, there could be more relations on $N$ that $M_H$ doesn't see, so $N$ could have algebraicity. I think I can see a proof that they are inter definable, but it would be tedious to write here.}

By~\ref{Rubin_hammer}, $D=M$ and $E$ is the identity equivalence relation. Thus $f \colon M_G \to M_H $ is a bijection. Since     
$\Phi = \Aut(f) $  in the sense of \cite[p.\ 66]{biinte}, this shows $\Phi(g)= f \circ g \circ f^{-1}$,   as required for the claim.
\end{proof}

We also offer  a direct approach to     the case of oligomorphic groups~$G$ with no algebraicity. (This is absent from the published paper in the Beijing J. of Pure and Applied Mathematics, 2026.) It  gives a more concrete description of $\Aut(G)$ and $\Out(G)$.  It can potentially be used to describe these groups for particular $G$, similar to the last section of \cite{reconstruction2}. 	The following will be needed.

\begin{fact}\label{no_alg} If $M$ has no algebraicity, then   the centraliser of  $G=\mrm{Aut}(M)$ in $\Sym(\omega)$ is trivial. In particular,    $\mrm{Aut}(M)$ has trivial center. 
\end{fact}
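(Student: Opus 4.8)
The plan is to show directly that any permutation commuting with all of $G$ must be the identity, by exploiting the no-algebraicity hypothesis through definable closure. Suppose $\pi \in \Sym(\omega)$ lies in the centraliser $C_G$ of $G$ in $\Sym(\omega)$, and fix an arbitrary $a \in \omega$; set $b = \pi(a)$. First I would record the key commutation consequence: since $\pi g = g \pi$ for every $g \in G$, applying this to any $g \in G_a$ gives $g(b) = g(\pi(a)) = \pi(g(a)) = \pi(a) = b$. Hence every automorphism fixing $a$ also fixes $b$, that is, $G_a \leq G_b$.

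Next I would translate this inclusion into model-theoretic terms. Since $M = M_G$ is $\aleph_0$-categorical, the orbit characterisation of definable closure (Ryll--Nardzewski) gives $\dcl(a) = \{ c \in \omega \colon G_a \leq G_c \}$, so the inclusion $G_a \leq G_b$ yields $b \in \dcl(a) \subseteq \acl(a)$. By the no-algebraicity hypothesis (Definition~\ref{no alg}) we have $\acl(\{a\}) = \{a\}$, whence $b = a$. As $a$ was arbitrary, $\pi$ fixes every point of $\omega$, so $\pi = \mathrm{id}$, proving that $C_G$ is trivial.

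Finally, for the ``in particular'' clause I would observe that the centre $Z(G)$ consists precisely of those $g \in G$ commuting with all of $G$, so $Z(G) = C_G \cap G \subseteq C_G = \{1\}$, giving $Z(G) = \{1\}$.

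This argument is short and I do not anticipate a genuine obstacle. The only point that requires care is the passage from ``$b$ is fixed by the stabiliser $G_a$'' to the membership $b \in \dcl(a)$: this is exactly where $\aleph_0$-categoricity is invoked, and it must be combined with the routine observation $\dcl(a) \subseteq \acl(a)$ so that the no-algebraicity hypothesis, stated for algebraic closure, can be applied.
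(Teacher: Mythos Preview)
Your proof is correct and follows essentially the same approach as the paper: the paper observes that any $h$ in the centraliser has $G$-invariant graph, hence is definable, so $h(a)\in\dcl(a)\subseteq\acl(a)=\{a\}$ for each $a$. Your version unpacks this pointwise via the stabiliser inclusion $G_a\leq G_b$ (which is the paper's Fact~\ref{fac2} restricted to real elements), but the underlying idea is the same.
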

\begin{proof} Any permutation $h$ in the centraliser of $G$  is invariant under the action of $G$ on $M$, and hence definable as a relation. So $h$ is the identity. \end{proof}

\begin{theorem} \label{th:no alg detail} Let $G$ be an oligomorphic group without algebraicity, and let $M= M_G$. We have   
	(1)   $\mrm{Aut}(G) \cong \mrm{Aut}(\mathcal{E}_M)$ and 
	(2) 	$\Out(G) \cong \mrm{Aut}(\mathcal{E}_M)/G$. \end{theorem}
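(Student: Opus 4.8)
The plan is to show that the conjugation homomorphism $R \colon N_G \to \mathrm{Aut}(G)$ of Theorem~\ref{prop: tdlc}, given by $R(\pi)(g) = \pi \circ g \circ \pi^{-1}$, is a topological isomorphism; since $N_G = \mathrm{Aut}(\mathcal{E}_M)$ by Fact~\ref{fa:NG}(i), this immediately yields~(1). First I would establish that $R$ is a bijection. For injectivity, recall from the proof of Theorem~\ref{prop: tdlc}(ii) that $\ker R$ is the centraliser $C_G$ of $G$ in $\Sym(\omega)$; by Fact~\ref{no_alg}, no algebraicity forces $C_G$ to be trivial, so $R$ is one-to-one. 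For surjectivity, I would invoke the Claim inside the proof of Lemma~\ref{lem: no alg}: applied with $H = G$, it says that every $\Phi \in \mathrm{Aut}(G)$ is of the form $g \mapsto f \circ g \circ f^{-1}$ for some bijection $f \colon M \to M$, i.e.\ some $f \in \Sym(\omega)$. Since $\Phi$ is an automorphism of $G$, we have $f G f^{-1} = \Phi(G) = G$, so $f$ normalises $G$, that is $f \in N_G$ and $\Phi = R(f)$; hence $R$ is onto.

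Next I would verify that $R$ is a homeomorphism. Both $N_G$ (a closed subgroup of $\Sym(\omega)$) and $\mathrm{Aut}(G)$ are Polish, so by the open mapping theorem for Polish groups (\cite[2.3.3]{gao}) it suffices to check that $R$ is continuous. Using Notation~\ref{not:cosets}, a basic open subgroup of $\mathrm{Aut}(G)$ around the identity has the form $\{\Phi \colon \Phi(A_i) = A_i, \ i = 1, \ldots, n\}$ for finitely many open cosets $A_i$, and $R(\pi)$ sends a coset $A$ to $\pi A \pi^{-1}$. Each $A_i$ is determined by finitely many points of $\omega$, so the condition $\pi A_i \pi^{-1} = A_i$ depends only on the restriction of $\pi$ to a finite set; thus the $R$-preimage of such a subgroup is open in $N_G$ for the topology of pointwise convergence, and $R$ is continuous. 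This completes~(1).

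Finally, for~(2) I would transport the quotient through $R$. Since $R$ restricted to $G \leq N_G$ is exactly the map $g \mapsto (x \mapsto g x g^{-1})$, we have $R(G) = \mathrm{Inn}(G)$, and because $R$ is injective, $R^{-1}(\mathrm{Inn}(G)) = G$. Therefore $R$ induces a topological isomorphism $\Out(G) = \mathrm{Aut}(G)/\mathrm{Inn}(G) \cong N_G/G = \mathrm{Aut}(\mathcal{E}_M)/G$, the quotient topologies corresponding under the homeomorphism $R$.

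The main obstacle is surjectivity of $R$, i.e.\ that every continuous automorphism of $G$ is conjugation by a single permutation of $\omega$; but this is already secured by the reconstruction argument (via Rubin's Facts~\ref{Rubin_regular} and~\ref{Rubin_hammer}) inside Lemma~\ref{lem: no alg}, so here it is merely cited. The remaining verifications—triviality of the kernel and continuity of $R$—are routine given the earlier results.
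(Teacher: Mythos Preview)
Your argument is correct and reaches the same conclusion as the paper, but by the reverse route. The paper builds the map in the other direction: it constructs $L\colon \Aut(G)\to \Aut(\mathcal E_M)$ by hand, sending $\beta$ to a permutation $L_\beta$ of $M$ defined via orbit representatives, and then invokes Rubin's Facts~\ref{Rubin_regular} and~\ref{Rubin_hammer} \emph{directly} to force the target sort to collapse to $M$; injectivity of $L$ is obtained from the algebraic Lemma~\ref{groups_lemma}. You instead take the conjugation map $R\colon N_G\to\Aut(G)$ from Theorem~\ref{prop: tdlc}, get injectivity immediately from $\ker R=C_G$ together with Fact~\ref{no_alg}, and outsource surjectivity to the Claim inside Lemma~\ref{lem: no alg} (which already packages the Rubin reconstruction). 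Your approach is shorter and more modular, reusing work already done; the paper's approach is deliberately self-contained (``a direct approach'') and yields an explicit formula for the inverse $L_\beta$ without appealing to Lemma~\ref{lem: no alg}.

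Two small points. First, the citation for the open mapping step should be \cite[2.3.4]{gao} rather than \cite[2.3.3]{gao}; the latter is the automatic-continuity result. Second, your continuity argument is correct but compressed: to make ``each $A_i$ is determined by finitely many points'' precise, write $A_i$ as a finite union $\bigcup_j h_j G_{\bar a}$ for some tuple $\bar a$, and note that $\pi A_i\pi^{-1}$ depends only on $\pi(\bar a)$ and the $\pi(h_j(\bar a))$; hence the stabiliser in $N_G$ of this finite set is contained in $R^{-1}\{\Phi:\Phi(A_i)=A_i\}$.
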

Note that the topological group  $\mrm{Aut}(\mathcal{E}_M)/\mrm{Aut}(M)$ is profinite by \ref{profinite_lemma}. So we have another proof that $\Out(G)$ is profinite.
\begin{proof} 	
	\smallskip \noindent (1)     Fix  $a_1, ..., a_k \in M$ as  representatives of the  $1$-orbits of $G$ acting on $M= M_G$. 
	Given  $\beta \in  \mrm{Aut}(G)$, we  will  define a corresponding map $L_\beta \colon M \to N$ where $N \subset M^{\mrm{eq}}$ is a submodel given by finitely many sorts. Our goal is    to show    that $N=M$ and hence  $L_\beta \colon M\to M$.% \in \mrm{Aut}(\mathcal{E}_M)$.

	\smallskip 
	
	\noindent The construction of $L_\beta$ is similar to the one in the proof of~\cite[Thm.\ 1.2]{biinte}. 
	\noindent For $i \leq k$ let $b_i \in M^{\mrm{eq}}$ be such that $G_{b_i}= \beta(G_{a_i})$. By~\ref{evans_fact} we may assume that $b_i $ is in a sort $D_i/E_i$ where $E_i$ is nondegenerate. Define \[L_\beta(g\cdot a_i ) = \beta(g) \cdot b_i.\] (This is well-defined because $g\cdot a_i = h\cdot a_i $ iff $g^{-1}h \in G_{a_i}$ iff 
	$ \beta(g) \cdot b_i = \beta(h)(b_i)$.) 
	
	\smallskip
	\noindent  The permutation groups $(G, M)$ and $(G, N)$ are isomorphic via  $\langle g,a \rangle \mapsto \langle \beta(g), L_\beta{(a)} \rangle$. Hence, $(G, N)$ has
	only finitely many orbits,    and  is faithful and closed. Thus, by \ref{Rubin_regular}   $N$ is a dense substructure of $M^{\mrm{eq}}$. 
	Further,  $N$ has no algebraicity since this holds for $M$, and is a property that can be seen from $(G,M)$, as indicated  in Definition~\ref{no alg}.
	Using that the sorts of $N$ are non-degenerate, by \ref{Rubin_hammer}, $N= M$, so that $L_\beta \colon M \to M$.
	
	\smallskip
	\noindent We now verify that we have  a (topological)   isomorphism $ L \colon \Aut(G) \to  \mrm{Aut}(\mathcal{E}_M)$ given by  $  \beta \mapsto  L_\beta$. We   mainly employ the two conditions (a) and (b) below: 
	
	\begin{enumerate}[(a)]
		\item   $G_{L_\beta(A)} = \beta(G_{a})$ for each $a \in M$.
		\item  For each $k \geq 1 $ and $\bar a, \bar b \in M^k$, we have  $G_{\bar a}= G_{\bar b}$ iff $\bar a=\bar b$.  In particular, $h \cdot \bar a = \bar c$ iff $ hG_{\bar a} h^{-1}  = G_{\bar c}$ for each $h \in \Aut(\+ E_M)$ (the normaliser of $G$ in $\Sym(\omega)$). 
	\end{enumerate}
	Condition (a) is easy to check from the definitions. Condition (b)  holds because  of  \ref{fac2} and  since  $M$ has no algebraicity. 
	\n From (a) we have that  $L(\beta^{-1})= (L_\beta)^{-1}$. In particular, $L_\beta $ is a permutation of $M$, and $\beta \to L_\beta$ preserves inverses.

	\smallskip 
	
	\noindent We  verify  that  \emph{$L_\beta$ is  an automorphism of $\mathcal{E}_M$}. Fix $n$ with $0 < n < \omega$.  Using the notation of  the definition of $\mathcal{E}_M$ in \ref{def_EM}, we need to show that  \begin{center} $\bar{a} \sim_n \bar{b}$ iff $L_\beta(\bar{a}) \sim_n L_\beta(\bar{b})$,  for each $\bar{a}, \bar{b} \in M^n$.\end{center} If $\bar{a} \sim_n \bar{b}$,   there is $g \in G$ such that $g(a_i) = b_i$ for all $i \in [1, n]$.
	%, where we let $\bar{a} = (a_1, ..., a_n)$ and $\bar{b} = (b_1, ..., b_n)$. 
	Clearly    $gG_{a_i}g^{-1} = G_{b_i}$. As $\beta \in  \Aut(G)$, this implies  $    \beta(g) \beta(G_{a_i}) \beta(g)^{-1} = \beta(G_{b_i})$ and so by (a) and (b),  $\beta(g)\cdot L_{\beta}(a_i) = L_{\beta}(b_i)$. %, by how $L_{\beta}$ was defined. 
	Thus $\beta(g) \in G$ is a witness of the fact that $L_\beta(\bar{a}) \sim_n L_\beta(\bar{b})$, as desired. The converse  implication  is proved analogously,  using $\beta^{-1}$ instead of $\beta$.
	
	\medskip
	
	\noindent To see that \emph{$L$ is a  group homomorphism}, we note that for $\gamma, \beta \in \Aut(G)$, by (a) we have $G_{L_{\gamma \circ \beta}(A) }= G_{L_\gamma(L_\beta(A))}$; this suffices by (b) and since $L$ preserves inverses. 
	%  \andre{Showing that $L(\beta)$ is an automorphism of $\+ E_M$ isn't as straightforward as you claim.  It needs \ref{fac2} but even then, $G_A$ doesn't remember the order of the elements}
	%  \gianluca{I both disagree and do not understand your comment, see the proof I wrote above. What is wrong with it?}
	%OK sorry, I got confused late at night.
	
	\medskip
	
	\noindent To see that \emph{$L$   is  onto},  given $r\in \mrm{Aut}(\mathcal{E}_M)$, define $\beta \in\Aut(G) $ by $\beta(g) =rgr^{-1}$; then $G_{L_\beta(A)}= \beta(G_a)=rG_ar^{-1}=  G_{r(A)}$ for each $a \in M$, so that $L_\beta = r$.   
	
	\medskip
	
	\noindent  To show that  \emph{$L$ is 1-1}, let   \[K = \mrm{ker}(L)=  \{\beta \in \Aut (G) : \beta(G_{a}) = G_{a}, \text{ for all } a \in M\}.\]  Note  that  $K \cap \Inn(G) = \{ e\}$, because,  for every $g\in G$, if conjugation by $g$ is  in $ K$ then   for every $a \in M$ we have  $G_{g(A)} = gG_{a}g^{-1} = G_a$, so that $g$ is the identity by~(a). As $\Inn(G)$ is always normal in $\Aut(G)$, we conclude  that $[K,\Inn(G)]=\{e\}$. An elementary group theoretic fact recalled  as \ref{groups_lemma} below,  together with \ref{no_alg},   now shows that the kernel $K$ of $L$ is trivial. 
	
	\medskip
	
	\noindent It remains to show that \emph{$L $ is continuous} with respect to the Polish topology on $\Aut(G)$ discussed in Section~\ref{ss: Polish}; in this case, $L$ is a homeomorphism   (see, e.g., \cite[2.3.4]{gao}). A   system of neighbourhoods of the identity in $\Aut(\+ E_M)$ is given by the subgroups $\{ r \colon \bigwedge_{i=1}^n \, r (a_i) = a_i\}$, where $a_1, \ldots, a_n \in M$. By (a) and (b), the  pre-image under the map  $L$ of such a group  is  $\{\beta \colon \, \bigwedge_{i=1}^n \beta(G_{a_i}) = G_{a_i}\}$, which is open in $\Aut(G)$. 
	%\gianluca{Yes, so things work :)! Here you have in mind that $M = \{a_i : i < \omega\}$? Maybe it is clear from the notation above but you may recall it. This is just an expositional remark. And minor, of course.}
	%\andre{no, I meant to denote any n-tuple of elements}

	\smallskip
	\noindent (2) The isomorphism  $L: \Aut(G) \to  \mrm{Aut}(\mathcal{E}_M)$  induces an isomorphism between the closed normal subgroups $\Inn(G)$ and $G= \Aut(M)$:  For  $r \in  \Aut(\+ E_M)$,  the pre-image $L^{-1}(r)$ is the map $g \mapsto g^r$. Then $r \in \Aut(M)$ iff this map is an inner automorphism.  
	% So it remains to show that $F\colon \mrm{Aut}_{\mrm{top}}(G) \cong_{\mrm{top}} \mrm{Aut}(\mathcal{E}_M)$ is continuous. 
	%\todov{Debt from Andre: argue that $\mrm{Aut}_{\mrm{top}}(\mrm{Aut}(M)) \cong_{\mrm{top}} \mrm{Aut}(\mathcal{E}_M)$.}
\end{proof}

\begin{lemma}\label{groups_lemma}  Let $G$ be a   group,  and suppose that an automorphism  $\beta$ of $G$ centralises  $\Inn(G)$. Then  
	%becomes trivial in $\Aut(G/Z(G))$; in other words, 
	for   each $g \in G$ we have $\beta(g)g^{-1}  \in Z(G)$. \end{lemma}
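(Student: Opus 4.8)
The plan is to reduce the statement to the standard identity describing how an automorphism conjugates an inner automorphism. Throughout I write $c_h \in \Inn(G)$ for conjugation by $h$, that is, $c_h(g) = g^h = hgh^{-1}$, following the convention of~\ref{conv;basic convs}. The map $h \mapsto c_h$ is a group homomorphism $G \to \Inn(G)$ (since $c_h c_k = c_{hk}$) whose kernel is exactly $Z(G)$; this is the fact I will exploit at the end.

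First I would record the conjugation rule: for any $\beta \in \Aut(G)$ and any $h \in G$, a direct computation gives $\beta c_h \beta^{-1} = c_{\beta(h)}$, because for every $g \in G$ one has $\beta\bigl(c_h(\beta^{-1}(g))\bigr) = \beta(h)\, g\, \beta(h)^{-1}$. Next I would feed in the hypothesis: saying that $\beta$ centralises $\Inn(G)$ means precisely that $\beta c_h = c_h \beta$, equivalently $\beta c_h \beta^{-1} = c_h$, for every $h \in G$. Comparing this with the conjugation rule yields $c_{\beta(h)} = c_h$ for all $h \in G$.

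Finally I would conclude using that $h \mapsto c_h$ is a homomorphism with kernel $Z(G)$: from $c_{\beta(h)} = c_h$ we get $c_{\beta(h)\, h^{-1}} = c_{\beta(h)}\, c_h^{-1} = \mathrm{id}$, so $\beta(h)\, h^{-1}$ lies in the kernel $Z(G)$, which is exactly the desired conclusion. There is essentially no genuine obstacle here; the only point requiring a little care is the bookkeeping with the conjugation convention $g^h = hgh^{-1}$, so that both the conjugation rule $\beta c_h \beta^{-1} = c_{\beta(h)}$ and the homomorphism property $c_h c_k = c_{hk}$ come out on the correct sides, but these are routine verifications.
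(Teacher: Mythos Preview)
Your proof is correct and follows essentially the same approach as the paper: both arguments use that $\beta$ commuting with conjugation by $g$ forces conjugation by $\beta(g)$ to equal conjugation by $g$, whence $\beta(g)g^{-1}$ lies in the kernel $Z(G)$ of $h \mapsto c_h$. The paper simply writes this out element-by-element as $(\beta(h))^{\beta(g)} = (\beta \circ \hat g)(h) = (\hat g \circ \beta)(h) = (\beta(h))^g$, whereas you package the same computation via the identity $\beta c_h \beta^{-1} = c_{\beta(h)}$.
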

\begin{proof}   Let $\hat g \in \Aut(G) $ denote conjugation by $g$. For each $h \in G$ we have  
	\[ (\beta(h))^{\beta(g)}= (\beta \circ  \hat g)(h)=   ( \hat g \circ  \beta)(h)= (\beta(h))^g. \]
	Thus $\beta(g) g^{-1}\in Z(G)$ as required. (The converse implication also holds.)
\end{proof}
 The following   fact is a well-known observation of Hrushovski; see~\cite[Ch.\ 1, Lemma~1.1.1]{Evans.etal:97} where it is proved in the language of permutation groups. In the subsequent  remark we use the  proof to show that the hypothesis that both $M$ and $N$ have no  algebraicity is necessary in~\ref{Rubin_hammer}. %(This is   not be needed for the proof of \ref{first_th}. )
 \begin{fact}   \label{1-transitive_fact} Let $M$ be a    countable $\aleph_0$-categorical structure. There is a countable $\aleph_0$-categorical structure $N$ such that $M$ and $N$ are bi-interpretable,    and     $N$ has only one $1$-orbit.
 \end{fact}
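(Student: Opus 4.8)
The plan is to realise $G=\Aut(M)$ as a \emph{transitive} permutation group by collapsing its finitely many $1$-orbits into a single orbit of tuples, which is Hrushovski's idea. Since $M$ is $\aleph_0$-categorical, $G$ is oligomorphic and its action on the domain of $M$ has only finitely many $1$-orbits $O_1,\dots,O_k$; I would fix representatives $a_i\in O_i$, put $\bar a=(a_1,\dots,a_k)$, and let $X=G\cdot\bar a\subseteq M^k$, a single $G$-orbit. As the orbit of a fixed $k$-tuple, $X$ is $\emptyset$-definable in $M$, and each $G$-orbit on $X^m$ is a $G$-invariant, hence $\emptyset$-definable, subset of $M^{km}$. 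I then let $N$ be the canonical structure with domain $X$ whose relations are exactly these orbits. Since $N$ is interpreted in $M$ on the definable set $X$, it is $\aleph_0$-categorical.

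First I would check that $N$ has a single $1$-orbit: by construction $G\le\Aut(N)$ acts transitively on $X=\mrm{dom}(N)$, so $\Aut(N)$ is transitive and $N$ has exactly one $1$-orbit. Next I would identify $\Aut(N)$ with $G$. The key observation is that the coordinate projections $\pi_i\colon X\to M$, $\bar b\mapsto b_i$, recover every element of $M$: because $\bar a$ meets each $1$-orbit, every $\omega\in M$ occurs as some coordinate of some $\bar b\in X$. Consequently the $G$-action on $X$ is faithful, and it is closed: if $g_n\in G$ converge pointwise on $X$, then for each $\omega\in M$, choosing $\bar b\in X$ and $i$ with $b_i=\omega$ shows that $g_n(\omega)=\pi_i(g_n\bar b)$ converges, and likewise for the inverses; the pointwise limit is a permutation of $M$ lying in $G$ (closed in $\Sym(M)$) and inducing the limit on $X$. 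Hence $\Aut(N)=\overline{G}=G$.

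Finally, to obtain bi-interpretability I would invoke the classical theorem of~\cite{biinte}: since $\Aut(M)=G=\Aut(N)$ as topological groups, $M$ and $N$ are bi-interpretable. Alternatively one can exhibit the bi-interpretation directly. The structure $N$ is interpreted in $M$ as above, while $M$ is interpreted in $N$ by recovering $M=\bigsqcup_i O_i$ as the quotient of $X\times\{1,\dots,k\}$ by the $\emptyset$-definable equivalence relation $(\bar b,i)\sim(\bar c,j)\Leftrightarrow b_i=c_j$ (these coordinate-equality relations are $G$-invariant, hence definable in $N$), with the relations of $M$ pulled back along the same coordinate maps; one then verifies that the two composite interpretations are definably isomorphic to the identity via the projections $\pi_i$ and their inverses.

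The main obstacle I expect is the closedness step, i.e.\ confirming $\Aut(N)=G$ rather than some strictly larger group on the collapsed domain; everything there reduces to the single observation that $\bar a$ meets every $1$-orbit, so the projections $\pi_i$ jointly recover $M$. The explicit bi-interpretation route avoids this but instead requires the routine bookkeeping of checking that the composite interpretations are the identity up to definable isomorphism, which again reduces to the same coordinate-recovery fact.
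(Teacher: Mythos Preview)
Your argument is correct and follows the same Hrushovski idea the paper invokes, but with a cleaner choice of domain for $N$. You take the single $G$-orbit $X=G\cdot\bar a\subseteq M^k$ of a fixed transversal tuple, whereas the paper passes to the sort $D/E$ of \emph{all} unordered transversals $\{b_1,\dots,b_n\}$ with one $b_i$ in each $1$-orbit. Your choice has the advantage that transitivity of $\Aut(N)$ on the new domain is immediate, since already $G$ is transitive on $X$ by construction; with the paper's $D/E$ one would still need to check that all unordered transversals lie in a single $G$-orbit, and this can fail when there are definable relations linking the $1$-orbits (e.g.\ two sorts connected by a definable bijection). The paper's quotient construction is, however, tailored to the subsequent Remark~\ref{the_transitivity_remark}, which exploits the specific combinatorics of $D/E$ to exhibit algebraicity in $N$. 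Your route to bi-interpretability---faithfulness and closedness of the $G$-action on $X$, hence $\Aut(N)\cong G$, then either Ahlbrandt--Ziegler or the explicit coordinate-projection interpretation of $M$ inside $N$---is more explicit than the paper's one-line appeal to density of the sort in $M^{\mathrm{eq}}$, and your identification of the key step (that $\bar a$ meets every $1$-orbit, so the projections $\pi_i$ jointly recover $M$) is exactly what makes both faithfulness and closedness go through.
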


 \begin{proof} Let   $0 < n < \omega$ be the number of $1$-orbits of $M$. We may replace $M$ by the canonical structure for $\Aut(M)$, by naming its  $n$-orbits with predicates of arity $n$. In particular, let $P_1, ..., P_n$ be the predicates naming the $1$-orbits. Let  
 	\begin{eqnarray*} D &=&  \bigcup_{\sigma \in S_n} \prod_{i=1}^n P_{\sigma(i)} \\
 		E &= &   \{ \langle \bar x , \bar y  \rangle\in D^2   \colon \exists \sigma \in S_n \forall i \, [x_i  = y_{\sigma(i)}]\}. \end{eqnarray*}
 	%	\begin{enumerate}[(1)]
 		%	\item $\bar{a} = (a_1, ..., a_n)$ is a tuple with no repetitions;
 		%	\item $\bar{a}$ is made of elements belonging to different $P_i's$, that is, for $i, j, k \in [1, n]$, if $a_i \in P_j$ and $k \neq i$, then $a_k \notin P_j$.
 		%	\end{enumerate}
 	The structure $N$ induced by $M^{\mrm{eq}}$ on   $D/E$ is  what we are looking for.  Notice that   $N$ is a dense subset of $M^{\mrm{eq}}$.
 	%\gianluca{Are you implicitly using that dense + $1$-transitive implies bi-interpretable? As written it is not clear what is going on. Notice that this is proved also in Evan's book. I had given a reference to this, see \cite[Lemma~1.1.1]{Evans.etal:97}. What they do there is that they take a tuple $\bar{a}$ as above and then they take the sort which is made of the orbit of $\bar{a}$ and observe that the action is closed and faithful. From a quick look it seems that what we are doing is the same but as written this proof is not clear = double definition of $D$ and not clear why from dense it follows bi-intepretability. As a matter of exposition, I mean.}
 	%
 	%\andre{Their Lemma 1.1.1 proof  is permutation group theoretic, so not so close to what we are doing. I only slightly improved the notation of what you had (see old version below). You can add more detail if you like. In fact what you have is not the same as in their 1.1.1. , their D is without the union over $S_n$. Also, the $n$ should be $k$ if we keep it.}
 \end{proof}

 \begin{remark}\label{the_transitivity_remark} In the context of the proof of \ref{1-transitive_fact}, if $M$ has more than one $1$-orbit, then $N$ has algebraicity, i.e., there is finite $A \subseteq N$ and \mbox{$a \in N \setminus A$ s.t. $a \in \mrm{acl}_N(A)$.}
 \end{remark}
 \begin{proof} To save on notation, we   show this for the case that  $M$ has only  two distinct infinite $1$-orbits, denoted  $P_1$ and $P_2$. Given  $\langle a,b \rangle \in D$, we denote its equivalence class by $\{a,b\}$.  On $N$ we consider the undirected graph with the    edge relation \begin{center} $\{a, b\} R \{c, d\}$ iff $|\{a, b \} \cap \{c, d \}| = 1$.  \end{center} This relation is definable because  $N  $ inherits all the definable relations from $M^{\mrm{eq}}$. 
 	%	Notice   that four distinct vertices of $(N, R)$ induce a complete subgraph iff they pairwise share the same    element from $M$.
 	Fix  $x \in P_1 $ and $y \in P_2$. Furthermore, fix pairwise distinct $x_i \in P_1\setminus \{x\}$ and $y_i \in P_2 \setminus \{y\}$, for $i =1,2$. Define  $a_i = \{x, y_i\}$ and $b_i = \{x_i, y\}$.    Now  $c = \{x, y\} $ is the unique  vertex connected  to each vertex in $A=\{a_1, a_2, b_1, b_2\}$. Thus $c  \in \mrm{dcl}_N (A) \setminus A$. 
 	%	 , $y_1, y_2, y_3, y_4 \in P_2 $ be so that $y_i \neq y_j$ if $i \neq j \in [1, 4]$ and define, for $i \in [1, 4]$, $a_i = \{x_*, y_i\}$. Similarly, let $y_* \in P_2 \setminus \{y_1, ..., y_4 \}$, $x_1, x_2, x_3, x_4 \in P_1 \setminus \{x_*\}$ be so that $x_i \neq x_j$ if $i \neq j \in [1, 4]$ and define, for $i \in [1, 4]$, $b_i = \{y_*, x_i\}$. Let $A_0 = \{a_1, ..., a_4, b_1, ..., b_4\}$. Then $c = \{x_*, y_*\} \not\in A_0$ but $c \in \mrm{acl}_N(A_0)$. This is maybe best seen in $M^{\mrm{eq}}$. To this extent, in $M^{\mrm{eq}}$ consider the following equivalence relation on $D(M^4)$ (recall the notation from the proof of \ref{1-trnasitive_fact}): $\bar{a} E' \bar{b}$ iff $\bar{a} E \bar{b}$ and $\bar{a}\bar{b}$ form a complete graph in $(N, R)$. Notice now, that $N/E'$ canonically corresponds to $M$. Thus, from the perspective on $M^{\mrm{eq}}$ fixing $A_0$ pointwise means fixing both $x_*$ and $y_*$, which in turn means fixing $\{x_*, y_*\}$, i.e., $c \in \mrm{dcl}_N(A_0)$, as desired.
 \end{proof}

\subsection{Finitely many essential subgroups up to conjugacy}

 Membership  of $G$ in our  second smooth  class  is determined solely by  its  lattice of open subgroups, and hence is invariant under topological isomorphism. To define this class, we need two  auxiliary notions,  the depth of open subgroups, and essential subgroups, Definition\ \ref{def_essential_intro}(3).

%\begin{definition}\label{def_depth} The set of open subgroups of $G$ under inclusions  form a lattice $(\mathcal{O}, \leq)$. Let $\lessdot$ be the resulting covering relation of $(\mathcal{O}_G, \leq)$, so that $H \lessdot H'$ iff $H \leq H'$, $H \neq H'$ and there is no $H''$ such that $H < H'' < H'$.
%By \ref{evans_fact} below, if $H < G$,   there is a chain    and $H = H_0, ..., H_{n} = G$ such that  $H_i \lessdot H_{i+1}$, for every $i < n$. Let the depth of $H$ be 
%the least length of such a chain (by convention $G$ has depth $0$).
%\end{definition}
It is well known  that for an oligomorphic group $G$,   there  are only finitely many  open subgroups   above any given one; this follows from~\ref{evans_fact}(ii).
\begin{definition}\label{def_depth}  The {\em depth} of an open subgroup  $U$ is the minimum of  all lengths of  maximal chains leading from $U$ to  $G$.  By this definition, $G$ has depth $0$. 
\end{definition}

\begin{definition} \label{def_essential_intro} \begin{enumerate}[(a)] \item One says that an open subgroup  of $G$ is \emph{irreducible} if it has no proper open subgroup of finite index. 
\item We say that 
  an irreducible    subgroup $H$ of $G$ is \emph{almost essential} if  every open subgroup of $G$ contains a finite intersection of conjugates of~$H$. \item Such a subgroup   $H$ is \emph{essential} if it is almost essential, and   its  depth is minimal among the    almost essential    subgroups. \end{enumerate} \end{definition}

We provide some model theoretic context to the condition in the definition of being  almost essential in~\ref{def_essential_intro}. Note that it is known that an open subgroup of $G$  is irreducible iff  it is the pointwise stabiliser of some set $\acl(e)$ where $e \in M^{\mrm{eq}}$. 
%LEAVE THIS \gianluca{What I say below clarifies the picture?} 
%
%\gianluca{Let $H \leq G$ be open and write $H = G_e$. Now suppose there is $H' \leq H$ which is open and write $H' = G_{e'}$. As $H' \leq H$, then $e \in \mrm{dcl}(e')$.
	%%
	%Furthermore, it seems to me that $H'$ has finite index in $H$ iff $e' \in \mrm{acl}(e)$. } \andre{yes, agreed}
%
%\gianluca{So indeed it would seem to me that is $H_* = G_{{e_*}}$ is smallest of finite index in $H$, then we have that for every $e' \in \mrm{acl}(e)$ we have that $e' \in \mrm{dcl}(e_*)$ and $e_* \in \mrm{acl}(e)$. Doesn't this mean that $H_* = G_{(\mrm{acl}(e))}$? All closure are taken in $M^{\mrm{eq}}$.} \andre{yes, sounds right... so irreducible means pointwise stabiliser of some $\acl(e)$, closure taken in $M^\mrm{eq}$. To actually put this we'd need to at least have a formal proof somewhere, to be produced if asked for (like logic blog). Did you take the notion of  irreducible from someone else's paper? } \gianluca{This is 100\% known to Evans but probably a reference does not exists. The term irreducible is used in \cite[Section~3.4]{Evans.etal:97} but not for general subgroups only for the group $G$ itself. I think that in some papers of Evans it might be used for all subgroups. But please don't ask me to dig it. I think that this is basic and we can just say: notice that irreducible means pointwise stabiliser of some $\acl(e)$. Anyway, you decide. All is good for me.}

\begin{proposition} \label{modelth background} Let  $U$ be an open subgroup of  $G = \mrm{Aut}(M)$. Write $ U = G_e$ for $e \in M^{\mrm{eq}}$, and let $S \subseteq M^{\mrm{eq}}$ be the $G$-orbit containing $e$. %So the conjugates of H are the stabilisers of the points of S. 
	The following are equivalent.
	\begin{enumerate}[(i)]
		\item Every open subgroup of $G$ contains a finite intersection of conjugates of $U$.
		\item The permutation group induced by $G$ on $S$ is faithful and closed.
		\item The structure induced by $M^{\mrm{eq}}$ on $S$ is bi-interpretable with $M$.  \end{enumerate} \end{proposition}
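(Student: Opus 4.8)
The plan is to route everything through the natural continuous action $\rho\colon G \to \Sym(S)$ sending $g$ to the permutation of $S$ it induces, and to identify its closure in $\Sym(S)$. Let $N$ denote the structure induced on $S$ by $M^{\mrm{eq}}$. First I would record the key observation that $\overline{\rho(G)} = \Aut(N)$. Indeed $\rho(G) \leq \Aut(N)$ since every $g\in G$ preserves the $\emptyset$-definable relations on $S$; and for each $n$ the $\rho(G)$-orbits on $S^n$ are exactly the $\emptyset$-definable-in-$N$ relations, which by $\aleph_0$-categoricity of $N$ (itself inherited from that of $M^{\mrm{eq}}$, as $S$ is a sort) coincide with the $\Aut(N)$-orbits. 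Since a closed subgroup of $\Sym(S)$ is determined by its orbits on all finite tuples, and passing to the closure does not change finite-tuple orbits, this gives $\overline{\rho(G)} = \Aut(N)$. Consequently condition (ii) -- that $\rho$ is faithful and closed -- amounts to $\rho$ being a continuous bijection of $G$ onto $\Aut(N)$; by automatic continuity of Borel homomorphisms between Polish groups \cite[2.3.3]{gao} this is the same as $\rho$ being a topological isomorphism onto $\Aut(N)$.

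For the equivalence (i) $\Leftrightarrow$ (ii) I would argue topologically. The conjugates of $U = G_e$ are precisely the stabilisers $G_{e'}$ for $e' \in S$, so (i) says that the finite intersections $G_{e_1} \cap \dots \cap G_{e_n}$ with $e_i \in S$ are cofinal in the filter of open subgroups, i.e.\ form a neighbourhood basis of $1$. As each $G_{e'}$ is open these intersections are exactly the basic $\rho$-open subgroups, so (i) holds iff the topology induced by $\rho$ equals the original topology of $G$, i.e.\ iff $\rho$ is a topological embedding. From such an embedding of the Polish group $G$ I would conclude that $\rho(G)$ is Polish in the subspace topology, hence $G_\delta$, hence a closed subgroup of $\Sym(S)$ (a $G_\delta$ subgroup of a Polish group is closed); combined with the injectivity forced by an embedding, this is (ii). Conversely (ii) makes $\rho$ a topological isomorphism onto $\Aut(N)$ by the first paragraph, in particular an embedding, whence (i).

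For (ii) $\Leftrightarrow$ (iii) I would invoke the duality between bi-interpretations and topological isomorphisms of \cite[Cor.\ 1.4]{biinte}. The inclusion $S \subseteq M^{\mrm{eq}}$ is the natural interpretation of $N$ in $M$, and the homomorphism it induces on automorphism groups is exactly $\rho$. That duality yields that this interpretation extends to a bi-interpretation of $M$ and $N$ precisely when the induced map $\Aut(M) \to \Aut(N)$ is a topological isomorphism. Reading (iii) in the natural way -- that $N$ is bi-interpretable with $M$ via this interpretation -- it is therefore equivalent to ``$\rho$ is a topological isomorphism'', which by the first paragraph is (ii).

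The step I expect to be the main obstacle is the careful bookkeeping in (ii) $\Leftrightarrow$ (iii): extracting from \cite{biinte} that it is precisely the homomorphism induced by the given interpretation that must be an isomorphism, rather than merely some abstract isomorphism $\Aut(M) \cong \Aut(N)$. This needs both directions of functoriality -- that a bi-interpretation induces mutually inverse isomorphisms, and that every topological isomorphism is, up to definable equivalence of interpretations, induced by a bi-interpretation -- so that one may replace the given interpretation by an equivalent one sitting inside a bi-interpretation. By contrast, the orbit computation giving $\overline{\rho(G)} = \Aut(N)$ and the $G_\delta$/automatic-continuity arguments are routine facts about oligomorphic and Polish groups.
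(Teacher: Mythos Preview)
Your proof is correct and follows essentially the same route as the paper's: both work with the action map $\rho\colon G\to\Sym(S)$, reduce (i)$\Leftrightarrow$(ii) to the statement that the $\rho$-pullback topology coincides with the given one, and appeal to \cite{biinte} for (ii)$\Leftrightarrow$(iii). The only notable difference is in the closedness step of (i)$\Rightarrow$(ii): you invoke the general Polish-group fact that a $G_\delta$ subgroup is closed, whereas the paper argues directly that a convergent sequence $\rho(g_n)$ in $\Sym(S)$ forces $(g_n)$ to converge in $G$, using that every $a\in M$ lies in $\dcl_{M^{\mrm{eq}}}(B)$ for some finite $B\subseteq S$ (which is exactly condition~(i) rephrased via Fact~\ref{fac2}); your preliminary identification $\overline{\rho(G)}=\Aut(N)$ is also made explicit where the paper leaves it implicit, and your caution about the bookkeeping in (ii)$\Leftrightarrow$(iii) is well placed, since the paper's own treatment of that direction is equally terse.
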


\begin{proof}   (i) $\to$ (ii). Firstly, the kernel of the   map $r$  in (ii) is contained in every open subgroup by (i), and so must be trivial. Secondly, for  every $a \in M$ there is a finite $B \subseteq S$ such that $a \in \dcl_{M^{\mrm{eq}}}(B)$. This implies that the group $r(G)$ induced on $S$ is closed: if $r(g_n)$ ($n \in \omega$) is a convergent sequence in the range  of $r$, then the $g_n$   converge and the limit $g$ is in $G$; so the $r(g_n)$ converge to $r(g)$.
	
	\smallskip \noindent   (ii) $\to$  (i).   $r$ gives a continuous isomorphism between the Polish groups $G$ and $r(G)$, so it is a homeomorphism. Thus, if $H \leq G$ is open, then $r(H)$ is open and so contains $G_{(B)}$ for some finite subset $B$ of $S$. 
	
	\smallskip \noindent   (iii) $\to$ (ii)  follows from  the above mentioned correspondence of topological isomorphisms and bi-interpretations \cite{biinte}. 
	
	\n   (ii) $\to$  (iii). The structure $N$ induced by $M^{\mrm{eq}}$ on $S$ is bi-definable with the structure obtained by  naming all the  orbits of  the action of $G$ on $S$. So, if the   map $r : G \to \Sym(S)$ is injective and has closed range, then $\mrm{Aut}(N) \cong G$.
\end{proof}

We are now ready to prove the statements of the introduction on groups with finitely many conjugacy classes of essential subgroups. 
\begin{lemma} \label{lem:essential}  The hypotheses  of Theorem~\ref{second_th-rec} are satisfied in the  following  setting:   

\begin{itemize}
	\item
	  $\+ V$ is the class of oligomorphic groups with finitely many essential subgroups up to conjugacy;   \item  $P(U,G)$ holds if $G \in \+ V$ and $U  $ is an essential subgroup.  \end{itemize}
\end{lemma}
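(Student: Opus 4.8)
The plan is to verify, one by one, the five requirements bundled into the hypotheses of Theorem~\ref{second_th-rec}: that $\+ V$ is a Borel class of oligomorphic groups; that $P$ is a Borel relation; that $P$ is invariant; that $P$ is sub-basic for $\+ V$; and that each $G \in \+ V$ has only finitely many conjugacy classes of $P$-subgroups. The last of these is immediate, since the $P$-subgroups are exactly the essential subgroups and membership in $\+ V$ is \emph{defined} to mean that there are finitely many of these up to conjugacy.

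First I would dispose of invariance and Borelness together, exploiting that every ingredient of Definition~\ref{def_essential_intro}---irreducibility, almost-essentiality, and depth---is formulated purely in terms of the lattice of open subgroups of $G$ together with the conjugation action, all of which is encoded in the meet groupoid $\+ W(G)$. Concretely, containment and finiteness of index between open subgroups, conjugacy of two open subgroups (via the existence of an element of $\+ W(G)$ with the appropriate domain and codomain, cf.\ Fact~\ref{fact:conjugates}), the meet of finitely many cosets, and the \emph{finite} poset of subgroups lying above a given $U$ (finite by Lemma~\ref{evans_fact}(ii), so that depth is well-defined and computable) are all detectable inside $\+ W(G)$. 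Invariance is then transparent: any topological isomorphism $G \to H$ induces an isomorphism $\+ W(G) \to \+ W(H)$ preserving all of this structure, hence carrying irreducible, almost essential, and essential subgroups to subgroups of the same kind, and preserving depth. For Borelness I would recall from the proof of Theorem~\ref{second_th-rec}(1) that $G \mapsto \+ W(G)$ is Borel; the properties above are then arithmetic combinations of Borel conditions over the countable domain of $\+ W(G)$---irreducibility and almost-essentiality are countable conjunctions and disjunctions over open subgroups and over finite tuples of conjugates, depth is a finitary computation in a finite poset, and ``finitely many essential subgroups up to conjugacy'' has the shape $\exists n$ (any $n+1$ essential subgroups contain two conjugate ones)---so both $P$ and the class $\+ V$ are Borel, leaning on the encoding machinery of \cite[Section~2]{nies}.

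Next I would establish that $P$ is sub-basic, which I expect to be the conceptual heart even though it is short. Fix $G \in \+ V$ and an essential subgroup $H$; then $H$ is in particular almost essential, so for every open subgroup $U$ there are finitely many conjugates with $H^{g_1} \cap \cdots \cap H^{g_k} \subseteq U$. The point is that each $H^{g_i}$ is again essential: conjugation is a lattice automorphism of the open subgroups, so it preserves irreducibility; it preserves the almost-essential property, because the set of conjugates of $H^{g}$ is exactly the set of conjugates of $H$; and it preserves depth, hence minimality of depth. Thus every open subgroup of $G$ contains a finite intersection of $P$-subgroups, which is precisely the sub-basic condition of Definition~\ref{def_sub-basic}(b).

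The main obstacle I anticipate is twofold. The routine but genuinely fiddly part is making the Borelness bookkeeping precise---especially writing depth and the ``finitely many conjugacy classes'' clause as explicitly Borel predicates in $\+ W(G)$---which I would carry out exactly as in the proof of Theorem~\ref{second_th-rec}(1). The subtler point is that the sub-basic argument presupposes that an essential subgroup exists at all; this is what guarantees that the $P$-subgroups actually generate the filter of open subgroups rather than being vacuous. I would secure this by reading the defining condition of $\+ V$ as the presence of \emph{at least one} essential subgroup (with only finitely many up to conjugacy); for the motivating $\mathfrak G$-finite examples existence is supplied by Proposition~\ref{there_are_essential}, so the almost-essential, minimal-depth subgroup whose conjugates generate the filter is always available.
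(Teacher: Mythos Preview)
Your proposal is correct and follows essentially the same route as the paper: Borelness of $P$ and of $\+ V$ is obtained by expressing essentiality and ``finitely many conjugacy classes'' as $L_{\omega_1,\omega}$ conditions in the meet groupoid $\+ W(G)$ (via the Borel map $G \mapsto \+ W(G)$), while invariance and sub-basicness are read off directly from the definitions. You actually supply more detail than the paper---which simply declares ``$P$ is clearly invariant and sub-basic''---particularly in spelling out why conjugates of an essential subgroup remain essential; your caveat that sub-basicness presupposes at least one essential subgroup is a fair edge case the paper leaves implicit in its intended reading of $\+ V$.
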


 \begin{proof}  Let $P_0(U,G)$ express that $U$ is an essential subgroup of an oligomorphic group $G$. 
 	To  show that the binary relation $P_0$    is Borel, we use the notation from   the proof of~\ref{second_th-rec}(1), in particular   the Borel function $G\mapsto \+ W(G)$ and the associated assignment $G \mapsto \nu_G$ where $\nu_G$ is a bijection $\omega \to \+ W(G)$. There is an  $L_{\omega_1, \omega}$-formula $\phi$   with one free variable expressing  in  $\+ W(G)$ that a subgroup is essential; indeed,    the three conditions in Definition~\ref{def_essential_intro}(a)-(c) are expressed  by quantifying over finite collections of objects. Given  an open subgroup $U$ of  the oligomorphic group $G$,  we have that   $U$ is essential in $G$   iff $\+ W(G)\models \phi(U)$; this condition is Borel using the identification of $\omega$ and $\+ W(G)$ uniformly provided by the maps $\nu_G$.

\smallskip \noindent 	Next   we show that the class $\+ V$ is Borel.   As above, since $P_0$ is a Borel property, the map $G \mapsto \+ W^{P_0}(G)$ is Borel,  where $\+ W^{P_0}(G)$ is $\+ W(G)$ extended by a unary predicate  saying that a subgroup satisfies $P_0$.    Conjugacy of subgroups $U,V$ can be expressed in $\+ W(G)$ via the formula $\exists A [ U = A \cdot A^{-1}  \land V= A^{-1} \cdot  A]$. So  there is another $L_{\omega_1, \omega}$-sentence saying that there are finitely many conjugacy classes of essential subgroups.

 \smallskip \noindent 	
 $P$ is clearly invariant and sub-basic in the sense of Definition~\ref{def_sub-basic}.    \end{proof} 

 Recall from the introduction that an oligomorphic  group   is \emph{$\mathfrak{G}$-finite}   if  each open subgroup  contains a least open subgroup of finite index. Borelness of this   class  can be shown in a similar way as above; we omit the proof. 
   \begin{proposition} \label{there_are_essential}   Let $G$ be $\mathfrak G$-finite. Then $G$ has an essential subgroup.  \end{proposition}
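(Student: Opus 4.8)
The plan is to produce an \emph{almost essential} subgroup; once the family of almost essential subgroups is known to be nonempty, choosing one of minimal depth yields an essential subgroup, since depth is a nonnegative integer and a nonempty set of integers has a least element. So the whole problem reduces to exhibiting a single irreducible open subgroup whose conjugacy class generates the filter of open subgroups.

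Write $G=\mrm{Aut}(M)$ with $M=M_G$, and let $a_1,\dots,a_k\in M$ represent the $1$-orbits of $G$. I would take $e=(a_1,\dots,a_k)\in M^k\subseteq M^{\mrm{eq}}$ and let $S$ be the $G$-orbit of $e$, which is by construction a single orbit. The key point is that $S$ is \emph{dense}: every $c\in M$ occurs as a coordinate of some element of $S$, because $c$ lies in some $1$-orbit, on which $G$ is transitive, so $c=g(a_i)$ for some $g\in G$ and then $g\cdot e\in S$ has $i$-th coordinate $c$; consequently $c\in\dcl(g\cdot e)$. Since every imaginary is definable from a real tuple, it follows that every element of $M^{\mrm{eq}}$ is definable from finitely many elements of $S$. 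From this I would check that the action of $G$ on $S$ is faithful (an element fixing $S$ pointwise fixes every coordinate, hence every point of $M$) and closed (a pointwise-convergent sequence $r(g_m)$ forces the $g_m$ to converge coordinatewise, and the limit lies in the closed group $G$). By Proposition~\ref{modelth background}, faithfulness and closedness of the action on $S$ are equivalent to the statement that every open subgroup of $G$ contains a finite intersection of conjugates of $U:=G_e$.

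Now I bring in $\mathfrak G$-finiteness. Applied to the open subgroup $U=G_e$, it provides a least open subgroup $H:=G_e^{\circ}$ of finite index in $U$. This $H$ is open, and it is irreducible: any proper open subgroup of finite index in $H$ would be a smaller open finite-index subgroup of $G_e$, contradicting the minimality of $H$. Moreover conjugation is an index-preserving lattice automorphism, so $gHg^{-1}$ is the least finite-index open subgroup of $gUg^{-1}$, and in particular $gHg^{-1}\subseteq gUg^{-1}$ for every $g$. Hence, given any open $V$, the finitely many conjugates of $U$ whose intersection lies in $V$ yield $\bigcap_i g_iHg_i^{-1}\subseteq\bigcap_i g_iUg_i^{-1}\subseteq V$, so the conjugacy class of $H$ also generates the filter of open subgroups. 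Thus $H$ is an irreducible open subgroup whose conjugacy class generates the filter, i.e.\ an almost essential subgroup, and minimizing depth over the (now nonempty) family of almost essential subgroups produces an essential one.

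The step carrying the real weight is the use of $\mathfrak G$-finiteness to guarantee that the least finite-index subgroup $G_e^{\circ}$ is \emph{open} (equivalently, that $G_{\acl(e)}$ is open): without it, the natural candidate for an irreducible generator of the filter need not be an open subgroup at all, which is exactly the phenomenon exhibited by the non-$\mathfrak G$-finite examples of Evans and Hewitt~\cite{evans_profinite}. The remaining ingredients — that the transversal orbit $S$ is a single dense orbit, and that passing to smaller conjugates preserves cofinality — are routine.
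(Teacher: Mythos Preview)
Your proof is correct and follows essentially the same approach as the paper: exhibit an open subgroup whose conjugates generate the filter, pass to its irreducible core via $\mathfrak G$-finiteness, and then minimize depth. The only cosmetic difference is that the paper first invokes Fact~\ref{1-transitive_fact} to reduce to the $1$-transitive case and then uses a single point stabiliser $G_a$, whereas you work directly with the stabiliser of a transversal tuple $e=(a_1,\dots,a_k)$ and appeal to Proposition~\ref{modelth background}; these are equivalent manoeuvres.
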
 

\begin{proof}   
By~\ref{1-transitive_fact} we can assume  that $G = \mrm{Aut}(M)$ and $G$ has a single $1$-orbit. For each $a \in M$, let $H(a) \leq G_{a}$ be the smallest open subgroup of $G$   that is a subgroup of  $G_{a}$ with  finite index. Note that  $H(a)$ is irreducible in the sense of  Definition~\ref{def_essential_intro}(a).
	Since each open subgroup of $G$ contains the pointwise stabiliser of a finite set,  each group of the form $H(a)$ is almost essential in  the sense of~\ref{def_essential_intro}. Hence  there is an almost essential subgroup of minimal depth, which is then essential by definition. 
\end{proof}

\subsection{Smoothness for weak elimination of imaginaries} \label{s:wei section}

The following is equivalent to the standard definition of weak elimination of imaginaries;  see e.g.\ \cite[Lemma~1.3]{evans_hrushovski} for a definition as well as a proof of the equivalence.

	\begin{definition}\label{very_weak} Let $M$ be $\aleph_0$-categorical. One says that $M$ has  \emph{weak elimination of imaginaries (w.e.i.)} if for every open $U \leq \mrm{Aut}(M)$ there is a unique finite algebraically closed set $A \subseteq M$ such that $G_{(A)} \leq U \leq G_{\{A\}}$. 
%	We say that $M$ has very weak elimination of imaginaries (vwei) if there are $n < \omega$ and sorts $S_1, ..., S_n$ in $M^{\mrm{eq}}$ such that for $N:=M \cup S_1 \cup \cdots \cup S_n $, the reduct of $M^{\mrm{eq}}$ to $N $ has weak elimination of imaginaries (we also allow the case $n = 0$ in which case $M = N$).  
\end{definition}

In the following we reprove the main result of~\cite[Th 1.1]{reconstruction2} in our framework (relying on some technical lemmas in~\cite{reconstruction2}),  and use this to obtain  new information about the   groups  $\Aut(G)$ where  $G= \Aut(M)$ and $M$ has w.e.i. 
\begin{corollary}[to Theorem~\ref{second_th-rec}] \label{weak_elimination_cor}  The class $\+ C$ of automorphism groups of $\aleph_0$-categorical structures that have w.e.i.\  is smooth~\cite{reconstruction2}. Furthermore, if $M$ has   w.e.i.\ and $G= \Aut(M)$, then  
 $\mrm{Aut}(G)$ is quasi-oligomorphic and $\Out(G)$ is profinite.
\end{corollary}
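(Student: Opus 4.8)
The plan is to derive all three assertions from Theorem~\ref{second_th-rec} by exhibiting $\+ C$ as a Borel subclass of the class $\+ V$ treated in Lemma~\ref{lem:essential}, namely the oligomorphic groups with only finitely many essential subgroups up to conjugacy. Granting the inclusion $\+ C \subseteq \+ V$ and Borelness of $\+ C$, the corollary is immediate: parts (2) and (3) of Theorem~\ref{second_th-rec} (equivalently, Theorem~\ref{great theorem}(b) for the second class) hold for \emph{every} member of $\+ V$, so in particular $\Aut(G)$ is isomorphic to an oligomorphic group --- hence quasi-oligomorphic --- and $\Out(G)$ is profinite for $G \in \+ C$; and smoothness of $\cong$ on $\+ V$ (Theorem~\ref{second_th-rec}(1)) restricts along the Borel inclusion to smoothness on $\+ C$. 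Thus everything reduces to two tasks: (a) $\+ C$ is Borel, and (b) every $G = \Aut(M)$ with $M$ having w.e.i.\ has finitely many conjugacy classes of essential subgroups.

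The core is task (b). First I would use w.e.i.\ to identify the irreducible subgroups. By Definition~\ref{very_weak}, every open $U \le G$ is sandwiched $G_{(A)} \le U \le G_{\{A\}}$ for a unique finite algebraically closed $A \subseteq M$; since $[G_{\{A\}}:G_{(A)}]$ is finite, irreducibility of $U$ forces $U = G_{(A)}$. Conversely, w.e.i.\ together with the observation after Definition~\ref{def_essential_intro} shows each such $G_{(A)}$ is irreducible. So the irreducible subgroups of $G$ are exactly the pointwise stabilisers $G_{(A)}$ of finite algebraically closed $A \subseteq M$.

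Next I would single out the almost essential ones using Proposition~\ref{modelth background}. Writing $U = G_{(A)} = G_e$ for a real tuple $e$ enumerating $A$, the kernel of the action of $G$ on the $G$-orbit $S$ of $e$ equals $\bigcap_{g} G_{(gA)} = G_{(\bigcup_g gA)}$, which is trivial exactly when $A$ meets every $1$-orbit of $G$. Hence, by the equivalence (i)$\Leftrightarrow$(ii) of Proposition~\ref{modelth background}, $G_{(A)}$ is almost essential iff $A$ is an algebraically closed transversal of the $1$-orbits; sufficiency can also be seen directly, since any finite $B \subseteq M$ is then covered by finitely many conjugates of $A$, giving condition (i). The essential subgroups are the almost essential ones of minimal depth. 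Since $G$ is oligomorphic, the minimal (under inclusion) algebraically closed transversals fall into finitely many $G$-orbits --- a minimal such transversal is the algebraic closure of a set containing one representative of each $1$-orbit, a tuple of bounded length by $\aleph_0$-categoricity with finitely many orbit types --- so there are finitely many conjugacy classes of their stabilisers, yielding (b).

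The step I expect to be the main obstacle is pinning down which almost essential subgroups actually attain the minimal depth: depth is defined through maximal chains rather than through the inclusion order on the sets $A$, so one must show that minimal depth is realised precisely on (conjugates of) the minimal transversals and that no shorter configuration intrudes. This is where I would invoke the technical lemmas of~\cite{reconstruction2}. The remaining task (a) is routine: w.e.i.\ can be expressed by an $L_{\omega_1,\omega}$-sentence over the meet groupoid $\+ W(G)$, in the style of the Borelness arguments in Lemma~\ref{lem:essential}, and since $G \mapsto \+ W(G)$ is Borel this makes $\+ C$ a Borel class. With (a) and (b) in hand, Theorem~\ref{second_th-rec} delivers the corollary.
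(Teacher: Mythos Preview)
Your overall strategy matches the paper's exactly: show that $\+ C$ is Borel and that $\+ C \subseteq \+ V$, then invoke Theorem~\ref{second_th-rec} via Lemma~\ref{lem:essential}. The identification of the irreducible subgroups with the $G_{(A)}$ for finite algebraically closed $A$ is also the same (the paper cites \cite[2.11]{reconstruction2} for this). Where you diverge is in how you obtain finitely many conjugacy classes of essential subgroups, and this is where your acknowledged obstacle lies.

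The paper bypasses your obstacle entirely. It does not attempt to characterise the almost essential subgroups, nor to match minimal depth against minimal transversals. Instead it argues in two short steps: first, $G$ is $\mathfrak G$-finite (a direct computation from w.e.i.: if $V \le G_{(A)}$ has finite index and $V = G_e$, then $B := \mathrm{acl}(e)\cap M$ satisfies $G_{(B)} \le V \le G_{\{B\}}$, while $A \subseteq B$ and $e \in \mathrm{acl}(A)$ force $B=A$, hence $V = G_{(A)}$), so Proposition~\ref{there_are_essential} gives existence of an essential subgroup. Second, by definition all essential subgroups share a single depth $k$; now \cite[2.17]{reconstruction2} says that for each fixed $k$ there are only finitely many conjugacy classes of pointwise stabilisers of algebraically closed sets of depth $k$. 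That finishes (b) without ever deciding which $A$ actually give essential subgroups. Your detour through ``almost essential $\Leftrightarrow$ $A$ is a transversal'' is therefore unnecessary, and its necessity direction is in any case not fully justified: you only checked faithfulness of the induced action, whereas Proposition~\ref{modelth background}(ii) requires closedness as well, and more to the point, if $A$ misses a $1$-orbit $O$ one can still have $b \in O$ lying in $\mathrm{acl}$ of a finite union of conjugates of $A$, so the kernel argument does not quite close.

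For Borelness the paper simply cites \cite[2.18]{reconstruction2}; your $L_{\omega_1,\omega}$ sketch over $\+ W(G)$ is a reasonable alternative in the spirit of Lemma~\ref{lem:essential}, but note that w.e.i.\ as stated in Definition~\ref{very_weak} refers to the underlying set $M$, not just to the abstract lattice of open subgroups, so expressing it purely in $\+ W(G)$ takes a bit more care than you indicate.
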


\begin{proof} $\+ C$ is Borel by~\cite[2.18]{reconstruction2}. It suffices to show that $\+ C$ is a subclass of  the class $\+ V$ from Lemma~\ref{lem:essential}.  Let $G= \Aut(M)$ where $M$ has w.e.i.

	\smallskip \noindent  To verify  that $G$ has an essential subgroup, by~\ref{there_are_essential} it suffices  to show that 
  $G$ is $\mathfrak G$-finite. To see this, suppose a finite set $A \subseteq M$ is   algebraically closed. We claim that if $V \subseteq G_{(A)}$ is open and of finite index in $G_{(A)}$  then $V = G_{(A)}$. We have $V = G_e$ for some imaginary $e$. Let $B = \mrm{acl}(e) \cap M$. Since $M$ has  w.e.i.\ we have $G_{(B)} \leq G_{(e)} \leq G_{\{B\}}$. So   $A \subseteq B$ (as $G_{(B)} \leq G_{(A)}$) and $e \in \mrm{acl}(A)$ (as $G_{(e)}$ is of finite index in $G_{(A)}$). It follows that $A = B$. 
 
\smallskip \noindent The irreducible subgroups of $G$ correspond precisely to the pointwise stabilisers of finite algebraically closed subsets of $M$ by~\cite[2.11]{reconstruction2}. 
 For any fixed depth $k < \omega$ there are only finitely many conjugacy classes of such stabilisers  by~\cite[2.17]{reconstruction2}.  A fortiori, there are only finitely many conjugacy classes of essential subgroups.   %Finally, we want to show that:
%\begin{enumerate}[(A)]
%	\item the group $\mrm{Aut}_{\mrm{top}}(G)$ is oligomorphic;
%	\item $\mrm{Aut}_{\mrm{top}}(G)/\Inn(G)$ is finite.
%\end{enumerate}
%Item (A) is by \ref{the_first_crucial_lemma}(\ref{crucial_item2}) as under the assumption of very weak elimination of imaginaries the structure $\mrm{Aut}(\+ C_G)$ is $\aleph_0$-categorical. Concerning item (B), that is by \ref{the_first_crucial_lemma}(\ref{crucial_item3}) as $\mrm{Aut}(M)$ is (modulo identifications) simply the subgroup of $\mrm{Aut}(\+ C_G)$ which fixes each equivalence class of the equivalence relations $L_n$'s from \ref{def_expanded_12_extra}(\ref{def_expanded_12_binary}).
\end{proof}

\end{document}